\newtheorem{theorem}{\textbf{Theorem}}[section]
\newtheorem{lemma}{\textbf{Lemma}}[section]
\newtheorem{proposition}{\textbf{Proposition}}[section]
\newtheorem{corollary}{\textbf{Corollary}}[section]
\newtheorem{remark}{\textbf{Remark}}[section]
\newtheorem{definition}{\textbf{Definition}}[section]
\def\be{\begin{equation}}
\def\ee{\end{equation}}
\def\bea{\begin{eqnarray}}
\def\eea{\end{eqnarray}}
\def\bt{\begin{theorem}}
\def\et{\end{theorem}}
\def\bl{\begin{lemma}}
\def\el{\end{lemma}}
\def\br{\begin{remark}}
\def\er{\end{remark}}
\def\bp{\begin{proposition}}
\def\ep{\end{proposition}}
\def\bc{\begin{corollary}}
\def\ec{\end{corollary}}
\def\bd{\begin{definition}}
\def\ed{\end{definition}}
\begin{document}

\title{On the Cahn-Hilliard equation with kinetic rate dependent \\
dynamic boundary condition and non-smooth potential:\\
separation property and long-time behavior}
\author{
Maoyin Lv \thanks{%
School of Mathematical Sciences, Fudan University,
Shanghai 200433, P.R. China.
Email: \texttt{mylv22@m.fudan.edu.cn} }, \ \
Hao Wu \thanks{%
Corresponding author. School of Mathematical Sciences, Fudan University,
Shanghai 200433, P.R. China.
Email: \texttt{haowufd@fudan.edu.cn} } }
\date{\today }
\maketitle

\begin{abstract}
\noindent We consider a class of Cahn-Hilliard equation that characterizes phase separation phenomena of binary mixtures in a bounded domain $\Omega \subset \mathbb{R}^d$ $(d\in \{2,3\})$ with non-permeable boundary. The equations in the bulk are subject to kinetic rate dependent dynamic boundary conditions with  possible boundary diffusion acting on the boundary chemical potential. For the initial boundary value problem with singular potentials, we prove that any global weak solution exhibits a propagation of regularity in time. In the two dimensional case, we establish the instantaneous strict separation property by a suitable De Giorgi's iteration scheme, which yields that the weak solution stays uniformly away from the pure phases $\pm 1$ from any positive time on. In particular, when the bulk and boundary chemical potentials are in equilibrium, we obtain the instantaneous separation property with or without possible boundary diffusion acting on the boundary chemical potential. Next, in the three dimensional case, we show the eventual strict separation property that holds after a sufficiently large time. These separation properties are obtained in an unified way with respect to the structural parameters. Moreover, they allow us to achieve higher-order regularity of the global weak solution and prove the convergence to a single equilibrium as $t\rightarrow \infty$.
\medskip

\noindent \textit{Keywords}:
Cahn-Hilliard equation, dynamic boundary condition, singular potential, separation property, convergence to equilibrium, De Giorgi's iteration scheme, {\L}ojasiewicz-Simon approach.
\smallskip

\noindent \textit{MSC 2020}: 35B40, 35B65, 35K35, 35K61, 35Q92.
\end{abstract}


\section{Introduction}

In this work, we consider the Cahn-Hilliard equation
\begin{align}
\left\{
    \begin{array}{ll}
       \partial_{t}\varphi=\Delta\mu,  &\quad \text{in }\Omega\times(0,\infty), \\
      \mu=-\Delta\varphi+F'(\varphi),   & \quad\text{in }\Omega\times(0,\infty),
    \end{array}\right.\label{CH}
\end{align}
subject to the so-called kinetic rate dependent dynamic boundary conditions
\begin{align}
    \left\{
    \begin{array}{ll}
\partial_{t}\varphi=\sigma\Delta_{\Gamma}\theta-\partial_{\mathbf{n}}\mu-\alpha\theta,
       & \quad\text{on }\Gamma\times(0,\infty), \\   \theta=\partial_{\mathbf{n}}\varphi-\nu\Delta_{\Gamma}\varphi+G'(\varphi),  & \quad\text{on }\Gamma\times(0,\infty),\\
       L\partial_{\mathbf{n}}\mu=\theta-\mu,&\quad\text{on }\Gamma\times(0,\infty),
    \end{array}\right.\label{dynamic}
\end{align}
and the initial condition
\begin{align}
\varphi(0)=\varphi_{0}\quad\text{in }\overline{\Omega}.\label{Chini}
\end{align}
Here, $\Omega\subset\mathbb{R}^{d}$ $(d\in\{2,3\})$ is a bounded domain with smooth boundary $\Gamma:=\partial\Omega$. The bold letter $\mathbf{n}=\mathbf{n}(x)$ denotes the unit outer normal vector on $\Gamma$ and $\partial_{\mathbf{n}}$ is the outward normal derivative on the boundary. The symbols $\Delta$, $\Delta_{\Gamma}$ denote the usual Laplace operator in $\Omega$ and the Laplace-Beltrami operator on $\Gamma$, respectively.

The Cahn-Hilliard equation \eqref{CH}, first introduced in \cite{CH}, serves as a fundamental diffuse-interface model that describes the phase separation process of binary mixtures. The unknown $\varphi:\overline{\Omega}\times(0,\infty)\rightarrow [-1,1]$ is a phase function that represents the difference between local concentrations of the two components of a binary mixture (often called the order parameter of the system) and the function $\mu:\Omega\times(0,\infty)\rightarrow\mathbb{R}$ stands for the (bulk) chemical potential. When the evolution described by \eqref{CH} is confined in a bounded domain $\Omega$, suitable boundary conditions should be taken into account. Classical choices are the homogeneous Neumann boundary conditions
\begin{align}
    \partial_{\mathbf{n}}\varphi=\partial_{\mathbf{n}}\mu=0\quad\text{on }\Gamma\times(0,\infty).\notag
\end{align}
The resulting initial boundary value problem has been analyzed in the literature from different viewpoints (cf. \cite{AW,GGM,KNP,K,RH,W} and the references therein).

In recent years, boundary effects in the phase separation process of binary mixtures have attracted a lot of attention, for instance, the effective short-range interactions between a polymer blend and the solid wall \cite{FMD,KEMRSBD}, and the contact line problem in hydrodynamic applications \cite{QWS}. To this end, physicists suggest the total free energy consisting of a Ginzburg-Landau type energy in the bulk and a surface contribution on the boundary (see \cite{FMD,KEMRSBD}):
\begin{align}
	E\big(\varphi\big):=\underbrace{\int_{\Omega}\Big(\frac{1}{2}|\nabla \varphi|^{2}+F(\varphi)\Big)\,\mathrm{d}x}_{\text{bulk free energy}}+\underbrace{\int_{\Gamma}\Big(\frac{\nu}{2}|\nabla_{\Gamma}\varphi|^{2}+G(\varphi)\Big)\,\mathrm{d}S}_{\text{surface free energy}}.\label{energy}
\end{align}
In \eqref{energy}, the symbols $\nabla$ and $\nabla_{\Gamma}$ denote the usual gradient operator and the tangential (surface) gradient operator. The parameter $\nu\in[0,\infty)$ represents possible surface diffusion.
Comparing with $\mu$, the function $\theta:\Gamma\times (0,\infty) \rightarrow\mathbb{R}$ in \eqref{dynamic} stands for the chemical potential on the boundary. The bulk and boundary chemical potentials $\mu$, $\theta$ are related to Fr\'echet derivatives of the bulk and surface free energy, respectively. They provide a measure of the change in bulk/surface energy with respect to small perturbation of the order parameter.
The nonlinear potential functions $F$ and $G$  in \eqref{energy} are energy densities in the bulk and on the boundary. A thermodynamically relevant example is given by the so-called logarithmic potential \cite{CH}:
\begin{align}	
F_{\mathrm{log}}(r):=\frac{\Theta}{2}\big[(1+r)\mathrm{ln}(1+r)+(1-r)\mathrm{ln}(1-r)\big] -\frac{\Theta_c}{2}r^{2},\quad r\in(-1,1),
\label{logari}
\end{align}
where $\Theta>0$ is the absolute temperature and $\Theta_c>0$ is the critical temperature. The
parameter $\Theta_c$ gives a measure of the strength of the de-mixing effect.
When $\Theta_c>\Theta$, $F_{\mathrm{log}}$ is nonconvex with a double-well structure and the phase separation occurs. Singular potentials like \eqref{logari} are important since they guarantee the solution $\varphi$ to take value in the physical interval $[-1, 1]$.

Based on the total free energy \eqref{energy}, various boundary conditions for the Cahn-Hilliard equation \eqref{CH} have been discussed in the literature (see the recent review article \cite{W}). Here, we are interested in a class of dynamic boundary conditions given by \eqref{dynamic}. The bulk and boundary chemical potentials $\mu$, $\theta$ are coupled through the boundary condition \eqref{dynamic}$_{3}$, where the parameter $L\in[0,\infty]$ is a non-negative constant. Its reciprocal $1/L$ is related to a kinetic rate and the term $L\partial_{\mathbf{n}}\mu$ describes adsorption and desorption processes between the materials in the bulk and on the boundary \cite{KLLM}. Besides, the parameter $\sigma \in[0,\infty)$ represents possible surface diffusion acting on the surface chemical potential and $\alpha\in[0,\infty)$ is related to possible mass exchange to the environment \cite{Gal06}.

The initial boundary value problem \eqref{CH}--\eqref{Chini} satisfies two important properties, that is, energy dissipation and mass conservation \cite{Gal06,Gal,GMS,KLLM,LW}. For sufficiently regular solutions, it is straightforward to check that
\begin{align}
    \frac{\mathrm{d}}{\mathrm{d}t}E\big(\varphi\big)+\int_{\Omega}|\nabla\mu|^{2}\,\mathrm{d}x +\int_{\Gamma}\Big(\sigma|\nabla_{\Gamma}\theta|^{2}+\alpha|\theta|^{2}\Big)\,\mathrm{d}S +\chi(L)\int_{\Gamma}(\theta-\mu)^{2}\,\mathrm{d}S=0,\quad \forall\, t\in (0,\infty),
    \label{formalener}
\end{align}
with
\begin{align*}
    \chi(L)=\left\{
    \begin{array}{ll}
       1/L,  &\text{if }L\in(0,\infty),  \\
        0, &\text{if }L=0\text{ or }\infty.
    \end{array}\right.
\end{align*}
The basic energy law \eqref{formalener} implies that different values of $L$, $\sigma$, $\alpha$ can yield different types of energy dissipation. On the other hand, we can derive the mass balance relation
\begin{align*}
    \frac{\mathrm{d}}{\mathrm{d}t}\Big(\int_{\Omega}\varphi\,\mathrm{d}x +\int_{\Gamma}\varphi\,\mathrm{d}S\Big)=-\alpha\int_{\Gamma}\theta\,\mathrm{d}S,\quad\forall\, t\in (0,\infty).
\end{align*}
The parameter $\alpha$ distinguishes the cases of permeable wall $(\alpha>0)$ and non-permeable wall $(\alpha=0)$, and when $\alpha=0$, the total mass is conserved:
\begin{align}
\int_{\Omega}\varphi(t)\,\mathrm{d}x+\int_{\Gamma}\varphi(t)\,\mathrm{d}S =\int_{\Omega}\varphi_{0}\,\mathrm{d}x+\int_{\Gamma}\varphi_{0}\,\mathrm{d}S,\quad \forall\,t\in[0,\infty).
\label{totalmass}
\end{align}
Furthermore, if $L=\infty$, the boundary condition \eqref{dynamic}$_{3}$ reduces to $\partial_{\mathbf{n}}\mu=0$ on $\Gamma\times(0,\infty)$, then instead of \eqref{totalmass}, the bulk and boundary mass are conserved separately (see \cite{LW}):
\begin{align}
\int_{\Omega}\varphi(t)\,\mathrm{d}x=\int_{\Omega}\varphi_{0}\,\mathrm{d}x,\quad\int_{\Gamma}\varphi(t)\,\mathrm{d}S=\int_{\Gamma}\varphi_{0}\,\mathrm{d}S,\quad t\in[0,\infty).\notag
\end{align}

Let us briefly mention some related works in the literature.
The case $L=0$ implies that $\mu$ and $\theta$ are in the chemical equilibrium, that is,
$\theta=\mu|_{\Gamma}$ on $\Gamma\times(0,\infty)$. When $L=0$, $\sigma=0$, $\nu>0$ and $\alpha>0$, such a model was first proposed in \cite{Gal06} based on the mass balance and variational method. The corresponding boundary condition is also referred to as the Wentzell boundary condition. Later in \cite{GMS}, an extended model with $L=0$, $\sigma\geq0$, $\nu\geq0$ and $\alpha=0$ was derived. For mathematical analysis including well-posedness and long-time behavior, we refer to \cite{Gal06,Ka,Gal,Wu,GW} concerning regular potentials and \cite{CGM,CF15,GMS} concerning singular potentials. In the case with non-permeable wall ($\alpha=0$) and $\sigma, \nu \geq 0$, well-posedness, regularity and long-time behavior of global weak solutions were established in \cite{GMS} under general assumptions on the potentials, with $F$ being singular and $G$ being regular (see also \cite{CGM} for further results with $\sigma,\nu>0$ in a proper variational formulation). If both the bulk and boundary potentials are singular, results on well-posedness, including the existence and uniqueness of global weak solutions and strong solutions were proven in \cite{CF15}. Later in \cite{FW}, the authors proved the strict separation property and convergence to equilibrium as $t\to \infty$. Well-posedness in the absence of the surface diffusion acting on the boundary phase variable was obtained in \cite{CFS} via the asymptotic limit as $\nu\rightarrow0$. We also refer to \cite{CGS18,GS} when the effects of convection and viscous regularization were taken into account.

The case $L=\infty$ was derived in \cite{LW} by an energetic variational approach that combines the least action principle and Onsager's principle of maximum energy dissipation. In this situation, the bulk and boundary chemical potentials are not directly coupled, while the interaction between bulk and boundary materials is represented  through the trace condition of the order parameter $\varphi$. When $\nu\geq 0$, $\sigma>0$ and $\alpha=0$, well-posedness and long-time behavior of the model with regular potentials were addressed in \cite{LW}. Later in  \cite{GK}, the authors proved the existence of weak solutions in a weaker form by the gradient flow approach, removing the additional geometric assumption imposed in \cite{LW} when $\nu=0$. Well-posedness for the model with general singular potentials was recently obtained in \cite{CFW}. For long-time behavior related to the existence of global attractors, we refer to \cite{MW}. Recently in \cite{CFSJEE}, the asymptotic limit of vanishing boundary diffusion as $\nu\rightarrow0$ was analyzed.

The intermediate case $L\in (0,\infty)$ was proposed in \cite{KLLM}, which provides a connection between the above mentioned models with $L=0$ and $L=\infty$. For regular potentials under suitable growth conditions, the authors of \cite{KLLM} proved  well-posedness by the gradient flow approach and studied the asymptotic limits as $L\rightarrow0$ and $L\rightarrow\infty$, respectively. We refer to \cite{GKY} for long-time behavior of global weak solutions including the existence of global/exponential attractors and convergence to a single equilibrium. An extended model with convection was recently analyzed in \cite{KS24}. We also mention \cite{KS} in which a nonlocal variant with $L\in [0,\infty]$ and regular potentials was investigated. However, the case with singular potentials $F$, $G$ turns out to be more involved. Singular potentials like \eqref{logari} are not admissible in the above mentioned works. To the best of our knowledge, the only known contribution in this direction is \cite{LvWu}, in which existence and uniqueness of global weak as well as strong solutions were obtained for a general class of non-smooth potentials. In the same paper, the authors studied the asymptotic limit as $\nu\rightarrow0$, which leads to the well-posedness without surface diffusion acting on boundary phase variable. Moreover, relations between all cases with $L=0$, $L\in(0,\infty)$ and $L=\infty$ were established via the asymptotic limits as $L\rightarrow0$ and $L\rightarrow\infty$.

In this work, we continue our study in \cite{LvWu} with singular bulk and boundary potential functions. For this purpose, we focus on the problem \eqref{CH}--\eqref{Chini} in a bounded domain with non-permeable wall (i.e., $\alpha=0$) and take into account boundary diffusion acting on the boundary phase variable (i.e., $\nu>0$). Without loss of generality, let us set $\nu=1$. Then we maintain two parameters $(L,\sigma)\in[0,\infty)\times[0,\infty)$ that are important in the subsequent analysis. More precisely, the following combination of parameters will be treated:
\begin{align}
    \begin{array}{lllll}
       \bullet\  & L\in(0,\infty),& \sigma\in(0,\infty), &\nu=1,&\alpha=0; \\
       \bullet\  & L=0, & \sigma\in[0,\infty), &\nu=1,&\alpha=0.
    \end{array} \label{choice}
\end{align}
When $L=0$, we are allowed to take $\sigma=0$, since the corresponding Dirichlet boundary condition $$\mu|_{\Gamma}=\theta\quad\text{on }\Sigma$$
together with the regularity of $\mu
\in H^1(\Omega)$ and the trace theorem yields that $\theta\in H^{1/2}(\Gamma)$. This fact provides crucial estimates in the corresponding analysis for boundary terms. The case $L=\infty$ will be treated separately in a future study, because a different approximating scheme has to be adopted (cf. \cite{CFW}).

Our aim is to study the regularity properties for global weak solutions to problem \eqref{CH}--\eqref{Chini} and to characterize their long-time behavior. The results are summarized as follows:

\begin{itemize}
\item[(1)] Regularity of global weak solutions. Under suitable assumptions on the singular potentials, we first prove that any global weak solution exhibits a propagation of regularity in time (see Theorem \ref{eventual}). Then we show the strict separation property provided that the initial datum is not a pure state: in both two and three dimensions, the eventual strict separation will happen after a sufficiently large time $T_{\mathrm{SP}}$ (see Theorem \ref{eventual-2}); while in two dimensions, the global weak solution will stay uniformly away from the pure states $\pm1$ instantaneously for positive time (see Theorem \ref{intantaneous}).
\item[(2)] Convergence to a single equilibrium. After establishing the strict separation property of global weak solutions, the singular potentials can be regarded as globally Lipschitz nonlinearities from a certain time on. Then under the additional assumption that $F$, $G$ are real analytic, we are able to prove that every global weak solution converges to a single equilibrium as $t\rightarrow\infty$ by the {\L}ojasiewicz-Simon approach (see Theorem \ref{equilibrium}).
\end{itemize}

We emphasize that the strict separation property for the order parameter plays a fundamental role in the study of phase-field models with singular potentials. For the Cahn-Hilliard equation \eqref{CH} subject to homogeneous Neumann boundary conditions, the eventual separation property was proven in \cite{AW} in  both two and three dimensions. The proof relies on the energy dissipation structure of the Cahn-Hilliard equation, which drives the evolution towards the set of steady states that are separated from the pure phases, see also \cite{GP} for an alternatively proof based on De Giorgi's iteration scheme.
On the other hand, the instantaneous separation property turns out to be more involved. Its validity under the logarithmic potential \eqref{logari} in two dimensions was first proven in \cite{MZ04}, while the three dimensional case is less satisfactory, since stronger singularity of the potential near $\pm 1$ is required, see \cite{LP18,MZ04}. A different approach for the instantaneous separation in two dimensions was given in \cite{GGM} for the Cahn-Hilliard-Oono equation. The argument therein was further simplified in \cite{HW21} and later in \cite{GGG}, a more direct argument can be found for a wider class of singular potentials. The methods in \cite{GGM,GGG,HW21,MZ04} rely on some specific relation between $\widehat{\beta}'$ and $\widehat{\beta}''$ ($\widehat{\beta}$ denotes the non-smooth convex part of $F$ with decomposition $F=\widehat{\beta}+\widehat{\pi}$, where $\widehat{\pi}$ is a smooth concave perturbation) such that
\begin{align}
	\widehat{\beta}''(s)\leq C_{\sharp}e^{C_{\sharp}|\widehat{\beta}'(s)|^{\kappa_{\sharp}}},
\quad\forall\,s\in(-1,1),\label{second}
\end{align}
for some constants $C_{\sharp}>0$ and $\kappa_{\sharp}\in[1,2)$ (in \cite{GGM,HW21,MZ04}, only the case $\kappa_{\sharp}=1$ was treated). Recently, with the aid of a suitable De Giorgi's iteration scheme, the authors of \cite{GP} were able to handle a more general set of assumptions on the singular potential, that is, as $\delta\rightarrow0$, there exists some $\kappa>1/2$, such that
\begin{align*}
	\frac{1}{\widehat{\beta}'(1-2\delta)} =O\Big(\frac{1}{|\ln \delta|^{\kappa}}\Big), \quad\frac{1}{|\widehat{\beta}'(-1+2\delta)|}=O\Big(\frac{1}{|\ln \delta|^{\kappa}}\Big).
\end{align*}
This yields a milder growth condition of the first derivative $\widehat{\beta}'$ near the pure states $\pm1$, but without any pointwise additional assumption on the second derivative $\widehat{\beta}''$ (cf. \eqref{second}). For recent developments concerning the nonlocal and fractional Cahn-Hilliard equations, we refer to \cite{GGG,Gi,GP,Po} and the references therein.

Returning to problem \eqref{CH}--\eqref{Chini}, when $L=\alpha=0$, $\sigma\geq0$, $\nu>0$, the authors of \cite{FW} proved the eventual separation property in both two and three dimensions using the dynamic approach as in \cite{AW}. Besides, in the two dimensional case, under the additional assumptions $\sigma>0$ and  \eqref{second} with $\kappa_{\sharp}=1$, they established the instantaneous separation property. In this study, we extend the argument in \cite{GP} to a wider class of problems with different types of bulk-surface interactions (recall \eqref{choice}). Under suitable compatibility condition between the bulk and boundary potentials, we are able to apply a modified De Giorgi's iteration scheme to establish the instantaneous separation property in two dimensions and the eventual separation property in two and three dimensions. Our argument is unified with respect to different choices of parameters described in \eqref{choice}.
Besides, our results extend the previous work \cite{FW} for $L = 0$ under a weaker assumption on the singular potential and without the regularizing effect due to surface diffusion $\sigma>0$ (see Remark \ref{rem-2d}).

The rest of this paper is organized as follows:
in Section 2, we introduce the functional settings and basic assumptions, after that we state the main results of this study. In Section 3, we show the propagation of regularity for global weak solutions for positive time.
Section 4 presents a comprehensive analysis of the eventual as well as instantaneous separation properties. Finally, the convergence to a single equilibrium is proven in Section 5. In the Appendix, we list some mathematical tools that are used in this paper.

\section{Main results}
\setcounter{equation}{0}
In this section, we first recall some notations for function spaces. After that we present the basic assumptions and state the main results.

\subsection{Preliminaries and basic settings}
For any real Banach space $X$, we denote its norm by $\|\cdot\|_X$, its dual space by $X'$
and the duality pairing between $X'$ and $X$ by
$\langle\cdot,\cdot\rangle_{X',X}$. If $X$ is a Hilbert space,
we denote its associated inner product by $(\cdot,\cdot)_X$.
The space $L^q(0,T;X)$ ($1\leq q\leq \infty$)
stands for the set of all strongly measurable $q$-integrable functions with
values in $X$, or, if $q=\infty$, essentially bounded functions. The space $L^p_{\mathrm{uloc}}(0,+\infty;X)$ denotes the uniformly local variant of $L^p(0,+\infty;X)$ consisting of all strongly measurable $f:[0,+\infty)\to X$ such that
\begin{equation*}
    \|f\|_{L^p_{\mathrm{uloc}}(0,+\infty;X)}:= \mathop\mathrm{sup}\limits_{t\ge0} \|f\|_{L^p(t,t+1;X)} < \infty.
\end{equation*}
If $T\in(0,+\infty)$, we find $L^p_{\mathrm{uloc}}(0,T;X)=L^p(0,T;X)$.
The space $C([0,T];X)$ denotes the Banach space of all bounded and
continuous functions $u:[ 0,T] \rightarrow X$ equipped with the supremum
norm, while $C_{w}([0,T];X)$ denotes the topological vector space of all
bounded and weakly continuous functions.

Let $\Omega$ be a bounded domain in $\mathbb{R}^d$ ($d\in \{2,3\}$) with sufficiently smooth boundary $\Gamma:=\partial \Omega$.
We denote by $|\Omega|$ and $|\Gamma|$
the Lebesgue measure of $\Omega$ and the Hausdorff measure of $\Gamma$, respectively.
For any $1\leq q\leq \infty$, $k\in \mathbb{N}$, the standard Lebesgue and Sobolev spaces on $\Omega$ are denoted by $L^{q}(\Omega )$
and $W^{k,q}(\Omega)$. Here, we use $\mathbb{N}$ for the set of natural numbers including zero.
For $s\geq 0$ and $q\in [1,\infty )$, we denote by $H^{s,q}(\Omega )$ the Bessel-potential spaces and by $W^{s,q}(\Omega )$ the Slobodeckij spaces.
If $q=2$, it holds $H^{s,2}(\Omega)=W^{s,2}(\Omega )$ for all $s$ and these spaces are Hilbert spaces.
We use the notations $H^s(\Omega)=H^{s,2}(\Omega)=W^{s,2}(\Omega )$ and $H^0(\Omega)$ can be identified with $L^2(\Omega)$.
The Lebesgue spaces, Sobolev spaces and Slobodeckij spaces on the boundary $\Gamma$ can be defined analogously,
provided that $\Gamma$ is sufficiently regular.
Again, we write $H^s(\Gamma)=H^{s,2}(\Gamma)=W^{s,2}(\Gamma)$ and identify $H^0(\Gamma)$ with $L^2(\Gamma)$.
For convenience, we apply the following shortcuts:
  \begin{align*}
  &H:=L^{2}(\Omega),\quad H_{\Gamma}:=L^{2}(\Gamma),\quad V:=H^{1}(\Omega),\quad V_{\Gamma}:=H^{1}(\Gamma).
  \end{align*}

Next, we introduce the product spaces
$$\mathcal{L}^{q}:=L^{q}(\Omega)\times L^{q}(\Gamma)\quad\mathrm{and}\quad\mathcal{H}^{k}:=H^{k}(\Omega)\times H^{k}(\Gamma),$$
for $q\in [1,\infty]$ and $k\in \mathbb{R}$, $k\geq 0$.
Like before, we can identify  $\mathcal{H}^{0}$ with $\mathcal{L}^{2}$.
 For any $k\in \mathbb{N}$, $\mathcal{H}^{k}$ is a Hilbert space endowed with the standard inner product
  $$
  \big((y,y_{\Gamma}),(z,z_{\Gamma})\big)_{\mathcal{H}^{k}}:=(y,z)_{H^{k}(\Omega)}+(y_{\Gamma},z_{\Gamma})_{H^{k}(\Gamma)},\quad\forall\, (y,y_{\Gamma}), (z,z_{\Gamma})\in\mathcal{H}^{k}
  $$
  and the induced norm $\Vert\cdot\Vert_{\mathcal{H}^{k}}:=(\cdot,\cdot)_{\mathcal{H}^{k}}^{1/2}$.
  We introduce the duality pairing
  \begin{align*}
  \big\langle (y,y_\Gamma),(\zeta, \zeta_\Gamma)\big\rangle_{(\mathcal{H}^1)',\mathcal{H}^1}
  = (y,\zeta)_{L^2(\Omega)}+ (y_\Gamma, \zeta_\Gamma)_{L^2(\Gamma)},
  \quad \forall\, (y,y_\Gamma)\in \mathcal{L}^2,\ (\zeta, \zeta_\Gamma)\in \mathcal{H}^1.
  \end{align*}
  By the Riesz representation theorem, this product
can be extended to a duality pairing on $(\mathcal{H}^1)'\times \mathcal{H}^1$.
For any $k\in\mathbb{Z}^+$, we introduce the Hilbert space
  $$
  \mathcal{V}^{k}:=\big\{(y,y_{\Gamma})\in\mathcal{H}^{k}\;:\;y|_{\Gamma}=y_{\Gamma}\ \ \text{a.e. on }\Gamma\big\},
  $$
  endowed with the inner product $(\cdot,\cdot)_{\mathcal{V}^{k}}:=(\cdot,\cdot)_{\mathcal{H}^{k}}$ and the associated norm $\Vert\cdot\Vert_{\mathcal{V}^{k}}:=\Vert\cdot\Vert_{\mathcal{H}^{k}}$.
  Here, $y|_{\Gamma}$ stands for the trace of $y\in H^k(\Omega)$ on the boundary $\Gamma$, which makes sense for $k\in \mathbb{Z}^+$.
  The duality pairing on $(\mathcal{V}^1)'\times \mathcal{V}^1$ can be defined in a similar manner.
  For convenience, we also introduce the notation
  $$
  \widetilde{\mathcal{V}}^{k}:=\big\{(y,y_{\Gamma})\in H^{k}(\Omega)\times H^{k-1/2}(\Gamma)\;:\;y|_{\Gamma}=y_{\Gamma}\ \ \text{a.e. on }\Gamma\big\},
  \quad  k\in \mathbb{Z}^+.
  $$
  Thanks to the trace theorem, for every $y\in H^k(\Omega)$, $k\in \mathbb{Z}^+$, it holds $(y,y|_{\Gamma})\in  \widetilde{\mathcal{V}}^{k}$. Besides, we consider the duality pairing between $(\widetilde{\mathcal{V}}^{1})'$ and $\widetilde{\mathcal{V}}^{1}$ as a continuous extension of the inner
product on $\mathcal{L}^2$, that is
 \begin{align*}
  \big\langle (y,y_\Gamma),(\zeta, \zeta_\Gamma)\big\rangle_{(\widetilde{\mathcal{V}}^{1})',\widetilde{\mathcal{V}}^{1}}
  = (y,\zeta)_{L^2(\Omega)}+ (y_\Gamma, \zeta_\Gamma)_{L^2(\Gamma)},
  \quad \forall\, (y,y_\Gamma)\in \mathcal{L}^2,\ (\zeta, \zeta_\Gamma)\in \widetilde{\mathcal{V}}^{1}.
  \end{align*}

For every $y\in (H^1(\Omega))'$, we denote by $%
\langle y\rangle_\Omega=|\Omega|^{-1}\langle
y,1\rangle_{(H^1(\Omega))',\,H^1(\Omega)}$ its generalized mean
value over $\Omega$. If $y\in L^1(\Omega)$, then its spatial mean is simply
given by $\langle y\rangle_\Omega=|\Omega|^{-1}\int_\Omega y \,\mathrm{d}x$.
The spatial mean for a function $y_\Gamma$ on $\Gamma$, denoted by $\langle
y_\Gamma\rangle_\Gamma$, can be defined in a similar manner.
For any given $m\in\mathbb{R}$, we set
 \begin{align}
 \mathcal{L}^{2}_{(m)}:=\big\{(y,y_{\Gamma})\in\mathcal{L}^{2}\;:\;\overline{m}(y,y_{\Gamma})=m\big\},
 \notag
\end{align}
 where the generalized mean is defined as
\begin{align}
\overline{m}(y,y_{\Gamma}):=\frac{|\Omega|\langle y\rangle_{\Omega}+|\Gamma|\langle y_{\Gamma}\rangle_{\Gamma}}{|\Omega|+|\Gamma|}.\label{gmean}
\end{align}
 Then the closed linear subspaces
 $$
 \mathcal{H}_{(0)}^k=\mathcal{H}^{k}\cap\mathcal{L}_{(0)}^{2},
 \quad \mathcal{V}_{(0)}^k=\mathcal{V}^{k}\cap\mathcal{L}_{(0)}^{2},
 \quad k\in \mathbb{Z}^+,
  $$
 are Hilbert spaces endowed with the inner products $(\cdot,\cdot)_{\mathcal{H}^{k}}$
 and the associated norms $\Vert\cdot\Vert_{\mathcal{H}^{k}}$, respectively.

 Let us first consider the case with parameters $L\in [0,\infty)$, $\sigma\in(0,\infty)$. We introduce the notations
  $$
  \mathcal{H}^{k}_{L,\sigma}:=
  \begin{cases}
  \mathcal{H}^k,\quad \text{if}\ L\in (0,\infty),\\
  \mathcal{V}^{k},\quad \ \text{if}\ L=0,
  \end{cases}\qquad
  \mathcal{H}^{k}_{L,\sigma,0}:=\mathcal{H}_{L,\sigma}^{k}\cap\mathcal{L}_{(0)}^{2}=
  \begin{cases}
  \mathcal{H}_{(0)}^k,\quad \text{if}\ L\in (0,\infty),\\
  \mathcal{V}^{k}_{(0)},\quad \ \text{if}\ L=0,
  \end{cases}
  $$
  for $k\in \mathbb{Z}^+$.
  Define the bilinear form
  \begin{align}
  a_{L,\sigma}\big((y,y_{\Gamma}),(z,z_{\Gamma})\big) :=\int_{\Omega}\nabla y\cdot\nabla z \,\mathrm{d}x + \sigma \int_{\Gamma}\nabla_{\Gamma}y_{\Gamma}\cdot\nabla_{\Gamma}z_{\Gamma}\,\mathrm{d}S
  +\chi(L)\int_{\Gamma}(y-y_{\Gamma})(z-z_{\Gamma})\,\mathrm{d}S,
  \notag
  \end{align}
  for all $(y,y_{\Gamma}),(z,z_{\Gamma})\in \mathcal{H}^{1}$, where
  $$
  \chi(L)=\begin{cases}
  1/L,\quad \text{if}\ L\in (0,\infty),\\
  0,\qquad \ \text{if}\ L=0.
  \end{cases}
  $$
 Set
  \begin{align}
  \Vert(y,y_{\Gamma})\Vert_{\mathcal{H}^{1}_{L,\sigma,0}}
  :=\big((y,y_{\Gamma}),(y,y_{\Gamma})\big)_{\mathcal{H}^{1}_{L,\sigma,0}}^{1/2}
  = \big[ a_{L,\sigma}\big((y,y_{\Gamma}),(y,y_{\Gamma})\big)\big]^{1/2},\qquad \forall\,
  (y,y_{\Gamma})\in \mathcal{H}^{1}_{L,\sigma,0}.
  \label{norm-hL}
  \end{align}
  We note that for $(y,y_{\Gamma})\in \mathcal{V}_{(0)}^1\subseteq\mathcal{H}^{1}_{L,\sigma,0}$, the norm $\Vert(y,y_{\Gamma})\Vert_{\mathcal{H}^{1}_{L,\sigma,0}}$ does not depend on $L$, since the third term in $a_{L,\sigma}$ simply vanishes. From the generalized Poincar\'{e}'s inequality in Lemma \ref{equivalent}, we find that $\mathcal{H}^{1}_{L,\sigma,0}$ is a Hilbert space with the inner product $(\cdot,\cdot)_{\mathcal{H}^{1}_{L,\sigma,0}}^{1/2}$. The induced norm $\Vert\cdot\Vert_{\mathcal{H}^{1}_{L,\sigma,0}}$ prescribed in \eqref{norm-hL} is equivalent to the standard one $\|\cdot\|_{\mathcal{H}^1}$ on $\mathcal{H}^{1}_{L,\sigma,0}$.
In particular, the space $\mathcal{V}_{(0)}^{1}$ has the equivalent norm given by
$$
\Vert(y,y_{\Gamma})\Vert_{\mathcal{V}_{(0)}^{1}}^{2}:=\int_{\Omega}|\nabla y|^{2}\,\mathrm{d}x +\int_{\Gamma}|\nabla_{\Gamma}y_{\Gamma}|^{2}\,\mathrm{d}S,\quad\forall\, (y,y_{\Gamma})\in\mathcal{V}_{(0)}^{1}.
$$
 For any fixed $L\in[0,\infty)$, $\sigma\in(0,\infty)$, we consider the following elliptic boundary value problem
      \begin{align}
      \left\{
      \begin{array}{rl}
      -\Delta u = y&\quad \text{in }\Omega,\\
      -\sigma\Delta_{\Gamma}u_\Gamma +\partial_{\mathbf{n}}u= y_{\Gamma}&\quad \text{on }\Gamma,\\
      L\partial_{\mathbf{n}}u =u_\Gamma-u|_{\Gamma} &\quad \mathrm{on\;}\Gamma.
      \end{array}\right.
      \label{2.2}
      \end{align}
   Define the space
   $$
   \mathcal{H}_{L,\sigma,0}^{-1}=
   \begin{cases}
   \mathcal{H}_{(0)}^{-1} :=\big\{(y,y_{\Gamma})\in(\mathcal{H}^{1})'\;:\;\overline{m}(y,y_{\Gamma})=0\big\},\quad \text{if}\ L\in(0,\infty),\\
   \mathcal{V}_{(0)}^{-1} :=\big\{(y,y_{\Gamma})\in(\mathcal{V}^{1})'\;:\;\overline{m}(y,y_{\Gamma})=0\big\},\quad \ \,\text{if}\ L=0,
   \end{cases}
   $$
   where $\overline{m}$ is given by \eqref{gmean} if $L\in (0,\infty)$, and for $L=0$, we take
   $$
   \overline{m}(y,y_{\Gamma})=\frac{\langle (y,y_{\Gamma}),(1,1)\rangle_{(\mathcal{V}^{1})', \mathcal{V}^{1}}}{|\Omega|+|\Gamma|}.
   $$
   Then the chain of inclusions holds
   $$
   \mathcal{H}^{1}_{L,\sigma,0} \subset \mathcal{L}_{(0)}^2\subset \mathcal{H}_{L,\sigma,0}^{-1} \subset (\mathcal{H}_{L,\sigma}^{1})'\subset (\mathcal{H}^{1}_{L,\sigma,0})'.
   $$
   It has been shown that (see \cite[Theorem 3.3]{KL} when $L>0$  and \cite{CF15} when $L=0$) for every $(y,y_{\Gamma})\in\mathcal{H}_{L,\sigma,0}^{-1}$, problem \eqref{2.2} admits a unique weak solution $(u,u_\Gamma)\in\mathcal{H}_{L,\sigma,0}^{1}$ satisfying the weak formulation
      \begin{align}
      a_{L,\sigma}\big((u,u_{\Gamma}),(\zeta,\zeta_{\Gamma})\big) = \big\langle(y,y_{\Gamma}),(\zeta,\zeta_{\Gamma})\big\rangle_{(\mathcal{H}^{1}_{L,\sigma})',\mathcal{H}^{1}_{L,\sigma}},
      \quad \forall\, (\zeta,\zeta_{\Gamma})\in\mathcal{H}_{L,\sigma}^{1},
      \notag
      \end{align}
      and the $\mathcal{H}^1$-estimate
      \begin{align}
      \|(u,u_{\Gamma})\|_{\mathcal{H}^1}\leq C\|(y,y_{\Gamma})\|_{(\mathcal{H}_{L,\sigma}^{1})'},\notag
      \end{align}
      for some constant $C>0$ depending only on $L$, $\sigma$ and $\Omega$. Furthermore, if $(y,y_{\Gamma})\in \mathcal{L}_{(0)}^{2}$, then it holds
      \begin{align}
      \|(u,u_{\Gamma})\|_{\mathcal{H}^{2}}\leq C\|(y,y_{\Gamma})\|_{\mathcal{L}^{2}}.
      \notag
      \end{align}
      The above facts enable us to define the solution operator $$\mathfrak{S}^{L,\sigma}:\mathcal{H}_{L,\sigma,0}^{-1}\rightarrow\mathcal{H}_{L,\sigma,0}^{1},\quad(y,y_{\Gamma})\mapsto (u,u_\Gamma)=\mathfrak{S}^{L,\sigma}(y,y_{\Gamma})=\big(\mathfrak{S}^{L,\sigma}_{\Omega}(y,y_{\Gamma}),\mathfrak{S}^{L,\sigma}_{\Gamma}(y,y_{\Gamma})\big).
      $$
      A direct calculation yields that
      $$
      \big((u,u_{\Gamma}), (z,z_\Gamma)\big)_{\mathcal{L}^2}
      =\big((u,u_{\Gamma}), \mathfrak{S}^{L,\sigma}(z,z_\Gamma)\big)_{\mathcal{H}^1_{L,\sigma,0}},\quad \forall\, (u,u_{\Gamma})\in \mathcal{H}_{L,\sigma,0}^1,\ (z,z_\Gamma)\in \mathcal{L}^2_{(0)}.
      $$
      Thanks to \cite[Corollary 3.5]{KL}, we can introduce the following inner product on $\mathcal{H}_{L,\sigma,0}^{-1}$:
      \begin{align}
      \big((y,y_{\Gamma}),(z,z_{\Gamma})\big)_{0,*}&:=
      a_{L,\sigma}\big(\mathfrak{S}^{L,\sigma}(y,y_{\Gamma}), \mathfrak{S}^{L,\sigma}(z,z_{\Gamma})\big),
      \quad \forall\, (y,y_{\Gamma}),(z,z_{\Gamma})\in \mathcal{H}_{L,\sigma,0}^{-1}.\notag
      \end{align}
      The associated norm $\Vert(y,y_{\Gamma})\Vert_{0,*} :=\big((y,y_{\Gamma}),(y,y_{\Gamma})\big)_{0,*}^{1/2}$
       is equivalent to the standard dual norm $\|\cdot\|_{(\mathcal{H}_{L,\sigma}^1)'}$ on $\mathcal{H}_{L,\sigma,0}^{-1}$.
      Then for any $(y,y_{\Gamma})\in (\mathcal{H}_{L,\sigma}^{1})'$, we have
      \begin{align}
      \|(y,y_{\Gamma})\|_{*}&:=\left(\Vert(y,y_{\Gamma})-\overline{m}(y,y_{\Gamma}) \mathbf{1}\Vert_{0,*}^2+ |\overline{m}(y,y_{\Gamma})|^2\right)^{1/2},
      \quad \forall\, (y,y_{\Gamma})\in (\mathcal{H}_{L,\sigma}^{1})',\notag
      \end{align}
      is equivalent to the usual dual norm $\|\cdot\|_{(\mathcal{H}_{L,\sigma}^{1})'}$.

Next, we consider the specific case with $L=\sigma=0$.
Denote the closed linear subspaces $\widetilde{\mathcal{V}}_{(0)}^{k}:=\widetilde{\mathcal{V}}^{k}\cap \mathcal{L}_{(0)}^{2}$. Thanks to the generalized Poincar\'{e}'s inequality in Lemma \ref{equivalentb}, the space $\widetilde{\mathcal{V}}^1_{(0)}$ can be equipped  with the inner product
$$\big((y,y_\Gamma),(z,z_\Gamma)\big)_{\widetilde{\mathcal{V}}^1_{(0)}}:=\int_{\Omega}\nabla y\cdot\nabla z\,\mathrm{d}x,\quad\forall\, (y,y_\Gamma),(z,z_\Gamma) \in \widetilde{\mathcal{V}}^1_{(0)},$$
and the corresponding norm $\|\cdot\|_{\widetilde{\mathcal{V}}^1_{(0)}}:=(\cdot,\cdot)_{\widetilde{\mathcal{V}}^1_{(0)}}^{1/2}$. We extend the bilinear form $a_{L,\sigma}$ to the case with $L=\sigma=0$, that is,
\begin{align*}
    a_{0,0}\big((y,y_\Gamma),(z,z_\Gamma)\big)=\int_{\Omega}\nabla y\cdot\nabla z\,\mathrm{d}x,\quad\forall\, (y,y_\Gamma),(z,z_\Gamma)\in \widetilde{\mathcal{V}}^1.
\end{align*}
Consider the operator $\mathbf{A}:\widetilde{\mathcal{V}}^1\rightarrow (\widetilde{\mathcal{V}}^{1})'$ induced by
\begin{align*}
\big\langle\mathbf{A}(y,y_\Gamma),(z,z_\Gamma) \big\rangle_{(\widetilde{\mathcal{V}}^{1})',\widetilde{\mathcal{V}}^{1}} =a_{0,0}\big((y,y_\Gamma),(z,z_\Gamma)\big),\quad\forall\,(y,y_\Gamma),(z,z_\Gamma) \in \widetilde{\mathcal{V}}^{1}.
\end{align*}
Define
$$
\widetilde{\mathcal{V}}_{(0)}^{-1}:=\big\{(y,y_{\Gamma})\in(\widetilde{\mathcal{V}}^{1})'\;:\;\overline{m}(y,y_{\Gamma})=0\big\},
$$
where the generalized mean is defined as
$$
\overline{m}(y,y_{\Gamma})=\frac{\langle (y,y_{\Gamma}),(1,1)\rangle_{(\widetilde{\mathcal{V}}^{1})', \widetilde{\mathcal{V}}^{1}}}{|\Omega|+|\Gamma|}.
$$
According to \cite{Gal}, we find that the restriction $\mathbf{A}_{0}=\mathbf{A}|_{\widetilde{\mathcal{V}}_{(0)}^{1}}$ is a linear isomorphism and its inverse $\mathbf{A}_{0}^{-1}:\widetilde{\mathcal{V}}_{(0)}^{-1}\rightarrow \widetilde{\mathcal{V}}_{(0)}^{1}$ is compact on $\mathcal{L}_{(0)}^{2}$. Besides, we can define the inner product on $\widetilde{\mathcal{V}}_{(0)}^{-1}$ by
\begin{align*}
\big((y,y_{\Gamma}),(z,z_{\Gamma})\big)_{\widetilde{\mathcal{V}}_{(0)}^{-1}} :=
a_{0,0}\big(\mathbf{A}_{0}^{-1}(y,y_{\Gamma}),\mathbf{A}_{0}^{-1}(z,z_{\Gamma})\big),\quad\forall\, (y,y_{\Gamma}),(z,z_{\Gamma})\in \widetilde{\mathcal{V}}_{(0)}^{-1}.
\end{align*}
For further properties of $\mathbf{A}$ and $\mathbf{A}_0$, we refer to \cite{Gal06,Gal} and the references therein.

Let us summarize the notations for operators and spaces corresponding to different values of the parameters $L, \sigma$ in \eqref{choice}, which will be used in the remaining part of this paper:
\begin{description}
\item[$\mathbf{(B1)}$] For $(L,\sigma)\in(0,\infty)\times(0,\infty)$,
\begin{align*}
\mathcal{A}_{L,\sigma}:=\mathfrak{S}^{L,\sigma},\quad\mathcal{H}_{L,\sigma}^{1}:=\mathcal{H}^{1},\quad \mathcal{H}_{L,\sigma,0}^{1}:=\mathcal{H}_{(0)}^{1}, \quad\mathcal{H}_{L,\sigma,0}^{-1}:=\mathcal{H}_{(0)}^{-1},
\quad\mathcal{V}_{\sigma}^{k}=\mathcal{H}^{k},
\quad\mathcal{V}_{\sigma,0}^{k}=\mathcal{H}^{k}_{(0)}.
\end{align*}
\item[$\mathbf{(B2)}$] For $(L,\sigma)\in\{0\}\times(0,\infty)$,
\begin{align*}
\mathcal{A}_{L,\sigma}:=\mathfrak{S}^{L,\sigma}, \quad\mathcal{H}_{L,\sigma}^{1}:=\mathcal{V}^{1},\quad \mathcal{H}_{L,\sigma,0}^{1}:=\mathcal{V}_{(0)}^{1}, \quad\mathcal{H}_{L,\sigma,0}^{-1}:=\mathcal{V}_{(0)}^{-1}, \quad\mathcal{V}_{\sigma}^{k}=\mathcal{V}^{k}, \quad\mathcal{V}_{\sigma,0}^{k}=\mathcal{V}^{k}_{(0)}.
\end{align*}
\item[$\mathbf{(B3)}$] For $L=\sigma=0$,
\begin{align*}
\mathcal{A}_{L,\sigma}:=\mathbf{A}_{0}^{-1}, \quad\mathcal{H}_{L,\sigma}^{1}:=\widetilde{\mathcal{V}}^{1}, \quad\mathcal{H}_{L,\sigma,0}^{1}:=\widetilde{\mathcal{V}}_{(0)}^{1}, \quad\mathcal{H}_{L,\sigma,0}^{-1}:=\widetilde{\mathcal{V}}_{(0)}^{-1}, \quad\mathcal{V}_{\sigma}^{k}=\widetilde{\mathcal{V}}^{k}, \quad\mathcal{V}_{\sigma,0}^{k}=\widetilde{\mathcal{V}}^{k}_{(0)}.
\end{align*}
\end{description}
For the sake of convenience, hereafter we shall use the bold notation for generic element $\boldsymbol{y}=(y,y_\Gamma)$ in the product spaces $\mathcal{L}^{2}$, $\mathcal{H}_{L,\sigma}^{1}$, $(\mathcal{H}_{L,\sigma}^{1})'$ etc.
Besides, the symbol $C$ stands for generic positive constants that may depend on coefficients of the system, norms of the initial datum, $\Omega$ and $\Gamma$. Their values may change from line to line and specific dependencies will be pointed out when necessary.

\subsection{The initial boundary value problem}
Set $Q:=\Omega\times(0,\infty)$, $\Sigma:=\Gamma\times(0,\infty)$ as well as $Q_{T}:=\Omega\times(0,T)$,  $\Sigma_{T}:=\Gamma\times(0,T)$ for any $T>0$.
Under the choice of parameters presented in \eqref{choice}, we reformulate the original problem \eqref{CH}--\eqref{Chini} into the following form
\begin{align}
\left\{
\begin{array}{ll}
\partial_{t}\varphi=\Delta\mu,&\quad \text{in }Q,\\
\mu=-\Delta \varphi+\beta(\varphi)+\pi(\varphi),&\quad \text{in }Q,\\
\partial_{t}\psi=\sigma\Delta_{\Gamma}\theta-\partial_{\mathbf{n}}\mu,&\quad \text{on }\Sigma,\\
\theta=\partial_{\mathbf{n}}\varphi-\Delta_{\Gamma}\psi+\beta_{\Gamma}(\psi)+\pi_{\Gamma}(\psi),&\quad \text{on }\Sigma,\\
\varphi|_{\Gamma}=\psi,&\quad \text{on }\Sigma,\\
L\partial_{\mathbf{n}}\mu=\theta-\mu|_{\Gamma},&\quad \text{on }\Sigma,\\
\varphi|_{t=0}=\varphi_{0},&\quad \text{in }\Omega,\\
\psi|_{t=0}=\psi_{0}=\varphi_{0}|_{\Gamma},&\quad \text{on }\Gamma.
\end{array}\right.
\label{model}
\end{align}
Here, we view \eqref{model} as a sort of transmission problem that consists of a Cahn-Hilliard equation for $\varphi$ in the bulk and another evolution equation for $\psi=\varphi|_\Gamma$ on the boundary \cite{KLLM,MZ}.
Besides, we make the following decomposition for the bulk and boundary potentials
$$
F=\widehat{\beta}+\widehat{\pi},\qquad G=\widehat{\beta}_\Gamma+\widehat{\pi}_\Gamma.
$$
Let us give the assumptions on the nonlinearities and initial data.
\begin{description}
\item[$\mathbf{(A1)}$] $\beta$, $\beta_{\Gamma}\in C^{1}(-1,1)$ are monotone increasing functions with
\begin{align*}
&\lim_{r\rightarrow-1}\beta(r)=-\infty,\qquad\lim_{r\rightarrow-1}\beta_{\Gamma}(r)=-\infty,\\
&\lim_{r\rightarrow1}\beta(r)=+\infty,\qquad\lim_{r\rightarrow1}\beta_{\Gamma}(r)=+\infty.
\end{align*}
Their primitive denoted by $\widehat{\beta}$, $\widehat{\beta}_{\Gamma}$, respectively, satisfy $\widehat{\beta}$, $\widehat{\beta}_{\Gamma}\in C([-1,1])\cap C^{2}(-1,1)$. The derivatives $\beta'$, $\beta_{\Gamma}'$ satisfy
\begin{align*}
    \beta'(r)\geq\varpi,\quad\beta_{\Gamma}'(r)\geq\varpi,\quad\forall \,r\in (-1,1)
\end{align*}
for some positive constant $\varpi>0$. Without loss of generality, we set $\widehat{\beta}(0)=\widehat{\beta}_{\Gamma}(0)=\beta(0)=\beta_{\Gamma}(0)=0$ and make the extension $\widehat{\beta}(r)=+\infty$, $\widehat{\beta}_{\Gamma}(r)=+\infty$ for $|r|>1$.
\item[$\mathbf{(A2)}$]  There exist positive constants $\varrho$, $c_{0}>0$ such that
  \begin{align}
|\beta(r)|\leq \varrho|\beta_{\Gamma}(r)|+c_{0},\quad\forall\,r\in (-1,1).\label{assum1}
  \end{align}
%
\item[$\mathbf{(A3)}$] $\widehat{\pi}, \widehat{\pi}_\Gamma \in C^1(\mathbb{R})$ and their derivatives $\pi:=\widehat{\pi}'$, $\pi_\Gamma:=\widehat{\pi}_\Gamma'$ are globally Lipschitz continuous with Lipschitz constants denoted by $K$ and $K_{\Gamma}$, respectively.
\item[$\mathbf{(A4)}$] $(\varphi_0,\psi_0)\in \mathcal{V}^1$ satisfying $ \widehat{\beta}(\varphi_0)\in L^1(\Omega)$, $\widehat{\beta}_\Gamma(\psi_0)\in L^1(\Gamma)$ and $\overline{m}(\varphi_0, \psi_0)=\overline{m}_{0} \in (-1,1)$.
\end{description}
\begin{remark}\rm
Following the usual setting in \cite{CC,CF15,CFW,CGS14,CGS18,LW}, we assume that $\beta_{\Gamma}$ dominates $\beta$ as in \eqref{assum1}. This compatibility condition is useful when we deal with the bulk-surface interaction and derive necessary \emph{a priori} estimates.
An alternative choice consists in taking the bulk potential as the dominating one (see \cite{GMS09}). Besides, comparing with the assumptions in \cite{FW}, here we no longer require that
the derivatives $\beta'$, $\beta_{\Gamma}'$ are convex.
\end{remark}

For the sake of convenience, we denote
$$
\bm{\varphi}=(\varphi, \psi), \quad \bm{\mu}=(\mu, \theta),\quad \bm{\beta}=(\beta,\beta_\Gamma),
\quad \bm{\pi}=(\pi, \pi_\Gamma), \quad \bm{\varphi}_0=(\varphi_0, \psi_0).
$$
Then we have the following result on the existence and uniqueness of global weak solutions to problem \eqref{model}.
\begin{proposition}
\label{existence}
Let $\Omega\subset\mathbb{R}^{d}$  $(d\in\{2,3\})$ be a bounded domain with smooth boundary $\Gamma$. Suppose that the assumptions $\mathbf{(A1)}$--$\mathbf{(A4)}$ are satisfied and the parameters $(L,\sigma)$ fulfill \eqref{choice}.  For arbitrary $T\in(0,\infty)$, problem \eqref{model} admits a global weak solution $(\boldsymbol{\varphi},\boldsymbol{\mu})$ in the following sense:
\begin{align}
&\boldsymbol{\omega}:=\boldsymbol{\varphi}-\overline{m}_{0}\boldsymbol{1}\in H^{1}(0,T;\mathcal{H}_{L,\sigma,0}^{-1})\cap L^{\infty}(0,T;\mathcal{V}_{(0)}^{1})\cap L^{2}(0,T;\mathcal{V}_{(0)}^{2}),\notag\\
&\boldsymbol{\mu}\in L^{2}(0,T;\mathcal{H}_{L,\sigma}^{1}),\qquad
\boldsymbol{\beta}(\boldsymbol{\varphi})\in L^{2}(0,T;\mathcal{L}^{2}),\notag
\end{align}
such that
\begin{align}
&\big\langle\partial_{t}\boldsymbol{\varphi}(t),\boldsymbol{y} \big\rangle_{\mathcal{H}_{L,\sigma}^{-1},\mathcal{H}_{L,\sigma}^{1}} +a_{L,\sigma}\big(\boldsymbol{\mu}(t),\boldsymbol{y}\big)=0, \qquad \forall\,\boldsymbol{y}\in\mathcal{H}_{L,\sigma}^{1},
\label{defn2.1}
\end{align}
for almost all $t\in(0,T)$, and
\begin{align}
&\mu=-\Delta \varphi+\beta(\varphi)+\pi(\varphi),
&&\text{a.e. in } Q_{T}, 
\label{eq3.5}\\
&\theta=\partial_{\mathbf{n}}\varphi-\Delta_{\Gamma}\psi+\beta_{\Gamma}(\psi)+\pi_{\Gamma}(\psi),
&&\text{a.e. on } \Sigma_{T}. 
\label{eq3.6}
\end{align}
Furthermore, the initial conditions are satisfied
\begin{align*}
\varphi|_{t=0}=\varphi_{0}\ \text{ a.e. in }\Omega,\qquad\psi|_{t=0}=\psi_{0}\ \text{ a.e. on }\Gamma.
\end{align*}
Finally, let $(\boldsymbol{\varphi}_{i}, \bm{\mu}_i)$ be two weak solutions corresponding to the initial data $\boldsymbol{\varphi}_{0,i}$, $i=1,2$, respectively. Then, there exists a constant $C>0$, depending on $K$, $K_{\Gamma}$, $\Omega$, $\Gamma$, coefficients of the system and $T$, such that
\begin{align}
&\Vert\boldsymbol{\varphi}_{1}(t)-\boldsymbol{\varphi}_{2}(t)\Vert_{\mathcal{H}_{L,\sigma,0}^{-1}}^{2}
+ \int_{0}^{t}\Vert\boldsymbol{\varphi}_{1}(s)-\boldsymbol{\varphi}_{2}(s) \Vert_{\mathcal{V}_{(0)}^{1}}^{2}\,\mathrm{d}s
\leq C\Vert\boldsymbol{\varphi}_{0,1}-\boldsymbol{\varphi}_{0,2}\Vert_{\mathcal{H}_{L,\sigma,0}^{-1}}^{2},
 \quad\forall\,t\in[0,T].
 \notag
\end{align}
As a consequence, the weak solution is unique.
\end{proposition}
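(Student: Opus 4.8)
The plan is to prove existence by a Moreau--Yosida regularization of the singular potentials, derive uniform \emph{a priori} estimates, pass to the limit, and to obtain uniqueness and continuous dependence from an energy estimate carried out in the dual norm; this is the scheme of \cite{LvWu}, adapted to the notation fixed above. For $\lambda\in(0,1)$ one replaces $\beta$, $\beta_\Gamma$ by globally Lipschitz monotone approximations $\beta_\lambda$, $\beta_{\Gamma,\lambda}$ with $\beta_\lambda(0)=\beta_{\Gamma,\lambda}(0)=0$, chosen by an appropriate scaling of the two Yosida parameters (as in \cite{CC,CGS14}) so that the domination inequality \eqref{assum1} is preserved, $|\beta_\lambda(r)|\le\varrho|\beta_{\Gamma,\lambda}(r)|+c_0$ for all $r\in\mathbb{R}$, uniformly in $\lambda$. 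For the regularized problem, whose nonlinearities are now smooth and globally Lipschitz, a global approximate solution $(\bm\varphi_\lambda,\bm\mu_\lambda)$ on $[0,T]$ is produced by a Faedo--Galerkin scheme in the eigenbasis of the bulk--surface operator associated with $a_{L,\sigma}$ (equivalently, with $\mathcal{A}_{L,\sigma}$) together with a standard continuation argument.

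I would then establish, uniformly in $\lambda$: (i) an energy estimate, obtained by testing the regularized \eqref{defn2.1} with $\bm\mu_\lambda$ and using the regularized \eqref{eq3.5}--\eqref{eq3.6} and the energy identity \eqref{formalener} (here $\alpha=0$, $\nu=1$), giving bounds for $\bm\varphi_\lambda$ in $L^\infty(0,T;\mathcal{V}^1)$, $\widehat\beta_\lambda(\varphi_\lambda)$ in $L^\infty(0,T;L^1(\Omega))$, $\widehat\beta_{\Gamma,\lambda}(\psi_\lambda)$ in $L^\infty(0,T;L^1(\Gamma))$, and $\bm\mu_\lambda$ in $L^2(0,T;\mathcal{H}^1_{L,\sigma,0})$; (ii) control of $\overline m(\bm\mu_\lambda)$ in $L^2(0,T)$, obtained by testing \eqref{eq3.5}--\eqref{eq3.6} with $\bm\varphi_\lambda-\overline m_0\mathbf{1}$ and with the constant pair $\mathbf{1}$, using mass conservation \eqref{totalmass}, (i), the inequality $\beta_\lambda(r)(r-\overline m_0)\ge c_1|\beta_\lambda(r)|-c_2$ (valid since $\overline m_0\in(-1,1)$) and $\mathbf{(A2)}$, which combined with Lemma \ref{equivalent} (resp.\ Lemma \ref{equivalentb} together with $\mu_\lambda|_\Gamma=\theta_\lambda$ when $L=\sigma=0$) yields $\bm\mu_\lambda$ bounded in $L^2(0,T;\mathcal{H}^1_{L,\sigma})$; (iii) an $L^2L^2$ bound on $\beta_\lambda(\varphi_\lambda)$, $\beta_{\Gamma,\lambda}(\psi_\lambda)$, obtained by testing \eqref{eq3.5}--\eqref{eq3.6} with $(\beta_\lambda(\varphi_\lambda),\beta_{\Gamma,\lambda}(\psi_\lambda))$ and absorbing the boundary cross term $\int_\Gamma\partial_{\mathbf n}\varphi_\lambda\,\beta_{\Gamma,\lambda}(\psi_\lambda)$ via $\mathbf{(A2)}$ and the good term $\int_\Gamma\beta_{\Gamma,\lambda}'(\psi_\lambda)|\nabla_\Gamma\psi_\lambda|^2$; (iv) $\bm\varphi_\lambda$ bounded in $L^2(0,T;\mathcal{V}^2)$, by reading \eqref{eq3.5}--\eqref{eq3.6} as a bulk--surface elliptic system with right-hand side bounded in $L^2(0,T;\mathcal{L}^2)$ and using the regularity recalled after \eqref{2.2}, together with $\partial_t\bm\varphi_\lambda$ bounded in $L^2(0,T;\mathcal{H}^{-1}_{L,\sigma,0})$ from \eqref{defn2.1}.

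By the Aubin--Lions--Simon lemma one then extracts a subsequence with $\bm\varphi_\lambda\to\bm\varphi$ strongly in $C([0,T];\mathcal{L}^2)\cap L^2(0,T;\mathcal{V}^1)$ and a.e., weakly-$\ast$ in $L^\infty(0,T;\mathcal{V}^1)$ and weakly in $L^2(0,T;\mathcal{V}^2)$, $\bm\mu_\lambda\rightharpoonup\bm\mu$ in $L^2(0,T;\mathcal{H}^1_{L,\sigma})$, $\beta_\lambda(\varphi_\lambda)\rightharpoonup\xi$ in $L^2(0,T;L^2(\Omega))$, $\beta_{\Gamma,\lambda}(\psi_\lambda)\rightharpoonup\xi_\Gamma$ in $L^2(0,T;L^2(\Gamma))$, and $\partial_t\bm\varphi_\lambda\rightharpoonup\partial_t\bm\varphi$. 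The uniform $L^1$-bound on $\widehat\beta_\lambda(\varphi_\lambda)$, a.e.\ convergence and Fatou's lemma force $\widehat\beta(\varphi)\in L^1(Q_T)$, hence $|\varphi|<1$ a.e.\ in $Q_T$ (similarly $|\psi|<1$ on $\Sigma_T$), so $\psi=\varphi|_\Gamma$; the standard monotonicity (Minty) argument, using a.e.\ convergence and the defining properties of the Yosida approximation, identifies $\xi=\beta(\varphi)$ and $\xi_\Gamma=\beta_\Gamma(\psi)$. Passing to the limit in the regularized \eqref{defn2.1}, \eqref{eq3.5}, \eqref{eq3.6} and in the initial conditions produces a global weak solution with the stated regularity, the membership $\bm\varphi-\overline m_0\mathbf{1}\in H^1(0,T;\mathcal{H}^{-1}_{L,\sigma,0})$ following from (iv) and \eqref{totalmass}.

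For uniqueness and continuous dependence, let $(\bm\varphi_i,\bm\mu_i)$, $i=1,2$, be two weak solutions with data of equal generalized mean, and set $\bm\varphi:=\bm\varphi_1-\bm\varphi_2$, which then has zero generalized mean for all $t$ by \eqref{totalmass}. Testing the difference of \eqref{defn2.1} with $\mathcal{A}_{L,\sigma}\bm\varphi(t)\in\mathcal{H}^1_{L,\sigma,0}$, using $\langle\partial_t\bm\varphi,\mathcal{A}_{L,\sigma}\bm\varphi\rangle=\tfrac12\tfrac{\mathrm d}{\mathrm dt}\|\bm\varphi\|_{\mathcal{H}^{-1}_{L,\sigma,0}}^2$ and the weak form of \eqref{2.2}, and subtracting \eqref{eq3.5}--\eqref{eq3.6} for the two solutions, one arrives at
\be
\frac12\frac{\mathrm d}{\mathrm dt}\|\bm\varphi(t)\|_{\mathcal{H}^{-1}_{L,\sigma,0}}^2+\|\bm\varphi(t)\|_{\mathcal{V}^1_{(0)}}^2+\big(\bm\beta(\bm\varphi_1)-\bm\beta(\bm\varphi_2),\bm\varphi\big)_{\mathcal{L}^2}=-\big(\bm\pi(\bm\varphi_1)-\bm\pi(\bm\varphi_2),\bm\varphi\big)_{\mathcal{L}^2}.
\ee
The $\bm\beta$-term is nonnegative by the monotonicity in $\mathbf{(A1)}$, the $\bm\pi$-term is bounded by $(K+K_\Gamma)\|\bm\varphi\|_{\mathcal{L}^2}^2$ and absorbed via the interpolation $\|\bm\varphi\|_{\mathcal{L}^2}^2\le\eta\|\bm\varphi\|_{\mathcal{V}^1_{(0)}}^2+C_\eta\|\bm\varphi\|_{\mathcal{H}^{-1}_{L,\sigma,0}}^2$ valid on the zero-mean subspace, and Gronwall's lemma yields the asserted estimate, whence uniqueness upon taking $\bm\varphi_{0,1}=\bm\varphi_{0,2}$. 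The main obstacle is in steps (ii)--(iii): obtaining the $L^2L^2$ bounds on $\beta_\lambda(\varphi_\lambda)$, $\beta_{\Gamma,\lambda}(\psi_\lambda)$ and the control of the means of the chemical potentials \emph{uniformly with respect to the three parameter regimes} of \eqref{choice}; the bulk--surface coupling terms produced by integration by parts, especially $\int_\Gamma\partial_{\mathbf n}\varphi_\lambda\,\beta_{\Gamma,\lambda}(\psi_\lambda)$, must be absorbed using \eqref{assum1} and the dissipative term $\int_\Gamma\beta_{\Gamma,\lambda}'(\psi_\lambda)|\nabla_\Gamma\psi_\lambda|^2$, and when $L=\sigma=0$ one additionally exploits that the Dirichlet transmission condition $\mu_\lambda|_\Gamma=\theta_\lambda$ lifts $\theta_\lambda$ to $H^{1/2}(\Gamma)$, while keeping every constant independent of $\lambda$ and of $(L,\sigma)$ throughout; the remaining steps are routine, cf.\ \cite{LvWu}.
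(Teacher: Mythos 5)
Your overall strategy coincides with the paper's: Moreau--Yosida regularization of $\beta$, $\beta_\Gamma$ with the two Yosida parameters scaled (by the factor $\varrho$) so that the domination condition \eqref{assum1} survives the approximation, uniform estimates, compactness and a Minty-type identification, and uniqueness via a Gronwall argument in the $\mathcal{H}^{-1}_{L,\sigma,0}$-norm with the Ehrling interpolation on the zero-mean subspace. The only structural difference is the solvability of the regularized problem: the paper adds a viscous term $\varepsilon\boldsymbol{\omega}_\varepsilon'$ in $\mathcal{L}^2$ and invokes the abstract theory of doubly nonlinear evolution equations of \cite{CV} together with the contraction mapping principle (see \eqref{appro}--\eqref{2.12}), whereas you propose a Faedo--Galerkin scheme; both are viable, and the paper in any case defers the details to \cite{LvWu} (for $L>0$) and \cite{CF15} (for $L=0$). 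Your uniqueness/continuous-dependence computation is exactly the paper's.

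There is, however, one concrete flaw in your step (iii). Testing \eqref{eq3.5} with $\beta_\lambda(\varphi_\lambda)$ and \eqref{eq3.6} with $\beta_{\Gamma,\lambda}(\psi_\lambda)$ leaves the residual boundary contribution $\int_\Gamma\partial_{\mathbf{n}}\varphi_\lambda\,\big(\beta_{\Gamma,\lambda}(\psi_\lambda)-\beta_\lambda(\psi_\lambda)\big)\,\mathrm{d}S$, and this \emph{cannot} be absorbed by the term $\int_\Gamma\beta_{\Gamma,\lambda}'(\psi_\lambda)|\nabla_\Gamma\psi_\lambda|^2\,\mathrm{d}S$ as you claim: a normal derivative is not controlled by tangential quantities, and at this stage of the argument $\partial_{\mathbf{n}}\varphi_\lambda$ is not yet known to lie in $L^2(\Gamma)$ (only the $H^1(\Omega)$ energy bound is available), so any attempt to estimate it would be circular. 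The device used in \cite{CC,CF15,LvWu} --- and the reason the scaled Yosida approximations and \eqref{assum1} are set up as they are --- is to test the boundary equation with $\beta_\lambda(\psi_\lambda)$, i.e.\ with the \emph{bulk} approximation evaluated at the trace, so that the pair $(\beta_\lambda(\varphi_\lambda),\beta_\lambda(\psi_\lambda))$ is an admissible test pair and the two normal-derivative terms cancel identically; the coercive product $\int_\Gamma\beta_{\Gamma,\lambda}(\psi_\lambda)\beta_\lambda(\psi_\lambda)\,\mathrm{d}S\ \ge\ \varrho^{-1}\|\beta_\lambda(\psi_\lambda)\|_{L^2(\Gamma)}^2-C$ (using \eqref{assum1} and the fact that both approximations vanish at $0$ and share the sign of their argument) then yields $\beta_\lambda(\varphi_\lambda)\in L^2(Q_T)$ and $\beta_\lambda(\psi_\lambda)\in L^2(\Sigma_T)$. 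Only after this, via the bulk--surface elliptic regularity giving $\partial_{\mathbf{n}}\varphi_\lambda\in L^2(\Sigma_T)$, can one return to the boundary equation and test with $\beta_{\Gamma,\lambda}(\psi_\lambda)$ to obtain the $L^2(\Sigma_T)$ bound on $\beta_{\Gamma,\lambda}(\psi_\lambda)$ and complete your step (iv). With this correction the remainder of your outline goes through.
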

\begin{remark}\label{conti}\rm
Since $T>0$ is arbitrary, the regularity of $\boldsymbol{\varphi}$ implies $\boldsymbol{\varphi}\in C_{w}([0,\infty);\mathcal{V}^{1})\cap C([0,\infty);\mathcal{L}^{2})$ so that the initial datum can be attained. A sketch of the proof for Proposition \ref{existence} can be found in Section \ref{approx}.
\end{remark}

\subsection{Statement of main results}
In this subsection, we state the main results of this paper.
The first result implies that every weak solution instantaneous regularizes for $t>0$.
\begin{theorem}
\label{eventual}
Suppose that $\Omega\subset\mathbb{R}^{d}$  $(d\in\{2,3\})$ is a bounded domain with smooth boundary $\Gamma$, the assumptions $\mathbf{(A1)}$--$\mathbf{(A4)}$ are satisfied and the parameters $(L,\sigma)$ fulfill \eqref{choice}.
Let $(\boldsymbol{\varphi},\boldsymbol{\mu})$ be the unique global weak solution to problem \eqref{model} obtained in Proposition \ref{existence}.  For any $\eta>0$, it holds
 \begin{align*}
 & \boldsymbol{\varphi}\in L^{\infty}(\eta,\infty;\mathcal{V}^{2}),\quad   \partial_t\bm{\varphi} \in L^{\infty}(\eta,\infty;\mathcal{H}_{L,\sigma,0}^{-1})\cap L^2_{\mathrm{uloc}}(\eta,\infty;\mathcal{V}_{(0)}^{1}),
 \\
 &\boldsymbol{\mu}\in L^{\infty}(\eta,\infty;\mathcal{H}_{L,\sigma}^{1})\cap L^{2}_{\mathrm{uloc}}(\eta,\infty;\mathcal{V}_{\sigma}^{2}).
 \end{align*}
 Moreover, we have $\beta_\Gamma(\psi)\in L^\infty(\eta, \infty; H_\Gamma)$ and
 $\boldsymbol{\beta}(\boldsymbol{\varphi})\in L^\infty(\eta, \infty;\mathcal{L}^q)$,
 when $d=2$, $q\in [2,\infty)$; when $d=3$, $q\in[1,6]$ if $\sigma>0$ and $q\in[1,4]$ if $\sigma=0$.
\end{theorem}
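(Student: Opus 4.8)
The plan is a bootstrap argument: first one improves the regularity of $\partial_t\bm{\varphi}$ by differentiating \eqref{model} in time, and then transfers this gain to $\bm{\mu}$ and $\bm{\varphi}$ through elliptic regularity for the stationary problems hidden in \eqref{defn2.1} and \eqref{eq3.5}--\eqref{eq3.6}. A preliminary step is to upgrade the finite-time bounds of Proposition~\ref{existence} to time-uniform ones: from the energy identity \eqref{formalener} with $\alpha=0$ one gets $E(\bm{\varphi}(t))\le E(\bm{\varphi}_0)$ together with $\int_0^\infty\big(\|\nabla\mu\|_{L^2(\Omega)}^2+\sigma\|\nabla_\Gamma\theta\|_{L^2(\Gamma)}^2+\chi(L)\|\theta-\mu\|_{L^2(\Gamma)}^2\big)\,\mathrm dt<\infty$; combining this with $\widehat\beta,\widehat\beta_\Gamma\ge0$, the constraint $\overline m_0\in(-1,1)$, and the well-known bound it provides on $\|\beta(\varphi)\|_{L^1(\Omega)}+\|\beta_\Gamma(\psi)\|_{L^1(\Gamma)}$ (hence on the generalized mean of $\bm{\mu}$), one obtains $\bm{\varphi}\in L^\infty(0,\infty;\mathcal{V}^1)$, $\bm{\mu}\in L^2_{\mathrm{uloc}}(0,\infty;\mathcal{H}^1_{L,\sigma})$, and, via \eqref{defn2.1} and the mass conservation \eqref{totalmass}, $\partial_t\bm{\varphi}\in L^2_{\mathrm{uloc}}(0,\infty;\mathcal{H}^{-1}_{L,\sigma,0})$.

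The core estimate is obtained by differentiating the equation in time. Set $\bm{\xi}:=\partial_t\bm{\varphi}$, which belongs to $\mathcal{V}^1_{(0)}$ whenever $\partial_t\varphi\in H^1(\Omega)$ (its trace equals $\partial_t\psi$) and has zero generalized mean by \eqref{totalmass}. Differentiating the weak formulation in time and testing with $\mathcal{A}_{L,\sigma}\bm{\xi}$, the parabolic term produces $\tfrac12\tfrac{\mathrm d}{\mathrm dt}\|\bm{\xi}\|_{\mathcal{H}^{-1}_{L,\sigma,0}}^2$, while a short computation based on \eqref{eq3.5}--\eqref{eq3.6} and integration by parts (in which the boundary contributions carrying $\partial_t\theta$ cancel) shows that the elliptic term equals
\begin{align*}
\|\bm{\xi}\|_{\mathcal{V}^1_{(0)}}^2+\int_\Omega\beta'(\varphi)|\partial_t\varphi|^2\,\mathrm dx+\int_\Gamma\beta_\Gamma'(\psi)|\partial_t\psi|^2\,\mathrm dS+\int_\Omega\pi'(\varphi)|\partial_t\varphi|^2\,\mathrm dx+\int_\Gamma\pi_\Gamma'(\psi)|\partial_t\psi|^2\,\mathrm dS .
\end{align*}
By $\mathbf{(A1)}$ the first three terms are nonnegative, whereas by $\mathbf{(A3)}$ the last two are controlled by $(K+K_\Gamma)\|\bm{\xi}\|_{\mathcal{L}^2}^2$; using the interpolation $\|\bm{\xi}\|_{\mathcal{L}^2}^2\le C\|\bm{\xi}\|_{\mathcal{V}^1_{(0)}}\|\bm{\xi}\|_{\mathcal{H}^{-1}_{L,\sigma,0}}$ on the zero-mean subspace and Young's inequality, one arrives at
\begin{align*}
\frac{\mathrm d}{\mathrm dt}\|\bm{\xi}\|_{\mathcal{H}^{-1}_{L,\sigma,0}}^2+\|\bm{\xi}\|_{\mathcal{V}^1_{(0)}}^2\le C\|\bm{\xi}\|_{\mathcal{H}^{-1}_{L,\sigma,0}}^2 .
\end{align*}
Since $t\mapsto\|\bm{\xi}(t)\|_{\mathcal{H}^{-1}_{L,\sigma,0}}^2$ lies in $L^1_{\mathrm{uloc}}(0,\infty)$ by the preliminary step, a uniform Gronwall argument yields $\|\partial_t\bm{\varphi}(t)\|_{\mathcal{H}^{-1}_{L,\sigma,0}}\le C(\eta)$ for all $t\ge\eta$, and a subsequent integration over $(t,t+1)$ gives $\partial_t\bm{\varphi}\in L^2_{\mathrm{uloc}}(\eta,\infty;\mathcal{V}^1_{(0)})$.

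With $\partial_t\bm{\varphi}$ under control, the remaining regularity follows from elliptic estimates at (almost every) fixed time $t\ge\eta$. Relation \eqref{defn2.1} reads $a_{L,\sigma}(\bm{\mu}(t),\cdot)=-\langle\partial_t\bm{\varphi}(t),\cdot\rangle$, so that $\bm{\mu}(t)-\overline m(\bm{\mu}(t))\mathbf 1=-\mathcal{A}_{L,\sigma}\partial_t\bm{\varphi}(t)$; the $\mathcal{H}^1$- and $\mathcal{H}^2$-estimates for $\mathfrak S^{L,\sigma}$ recalled in Section~2 (for $\sigma=0$ one uses instead the regularity of the corresponding Neumann/Wentzell problem, together with $\partial_t\psi\in H^{1/2}(\Gamma)$ seen as the trace of $\partial_t\varphi\in H^1(\Omega)$), combined with the uniform control of $\overline m(\bm{\mu})$, yield $\bm{\mu}\in L^\infty(\eta,\infty;\mathcal{H}^1_{L,\sigma})$ and $\bm{\mu}\in L^2_{\mathrm{uloc}}(\eta,\infty;\mathcal{V}^2_\sigma)$. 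Next one views \eqref{eq3.5}--\eqref{eq3.6} as the stationary bulk--surface system $-\Delta\varphi+\beta(\varphi)=\mu-\pi(\varphi)$ in $\Omega$, $\partial_{\mathbf n}\varphi-\Delta_\Gamma\psi+\beta_\Gamma(\psi)=\theta-\pi_\Gamma(\psi)$ on $\Gamma$, $\varphi|_\Gamma=\psi$, whose right-hand sides belong to $L^\infty(\eta,\infty;V)\times L^\infty(\eta,\infty;V_\Gamma)$ (with $\theta=\mu|_\Gamma\in H^{1/2}(\Gamma)$ when $\sigma=0$, by the trace theorem as noted after \eqref{choice}). Testing the two equations with $\beta(\varphi)$ and $\beta_\Gamma(\psi)$ respectively, using the monotonicity of $\bm{\beta}$ and the compatibility condition $\mathbf{(A2)}$ to absorb the coupling term $\partial_{\mathbf n}\varphi$, gives $\bm{\beta}(\bm{\varphi})\in L^\infty(\eta,\infty;\mathcal{L}^2)$, in particular $\beta_\Gamma(\psi)\in L^\infty(\eta,\infty;H_\Gamma)$; hence $\Delta\varphi$ and $\Delta_\Gamma\psi$ lie in $L^\infty(\eta,\infty;H)$ and $L^\infty(\eta,\infty;H_\Gamma)$, and elliptic regularity yields $\bm{\varphi}\in L^\infty(\eta,\infty;\mathcal{V}^2)$. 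Finally, to reach $\bm{\beta}(\bm{\varphi})\in L^\infty(\eta,\infty;\mathcal{L}^q)$ one tests the same equations with $|\beta(\varphi)|^{q-2}\beta(\varphi)$ and $|\beta_\Gamma(\psi)|^{q-2}\beta_\Gamma(\psi)$: the monotone contributions keep a favorable sign, leaving $\|\beta(\varphi)\|_{L^q(\Omega)}\le C(\|\mu\|_{L^q(\Omega)}+1)$ and the boundary analogue $\|\beta_\Gamma(\psi)\|_{L^q(\Gamma)}\le C(\|\theta\|_{L^q(\Gamma)}+1)$; the admissible $q$ are thus those for which $\mu,\theta$ are $q$-integrable, and since $H^1(\Omega)\hookrightarrow L^q(\Omega)$ forces $q\le6$ when $d=3$, $\theta\in H^1(\Gamma)$ (for $\sigma>0$) imposes no extra constraint while $\theta\in H^{1/2}(\Gamma)$ (for $\sigma=0$) forces $q\le4$ when $d=3$, and all $q<\infty$ are admissible when $d=2$.

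The main obstacle is the rigorous justification of the time-differentiated estimate, since a weak solution does not a priori have the regularity needed to differentiate \eqref{model} in time. This is carried out on the approximating scheme of Section~\ref{approx} (Yosida regularization of $\beta,\beta_\Gamma$ plus a Faedo--Galerkin discretization), where the above differential inequality holds with constants independent of the approximation parameters, and one then passes to the limit using the weak and weak-$*$ lower semicontinuity of norms together with Fatou's lemma. It is crucial that the Gronwall step be the \emph{uniform} one, so that no control of $\partial_t\bm{\varphi}$ at $t=0$ is needed, as such control is not available. A secondary, purely elliptic difficulty is the bulk--surface coupling through $\partial_{\mathbf n}\varphi$ in the estimates above, which is exactly where assumption $\mathbf{(A2)}$ is used; when $\sigma=0$ one must in addition exploit the $H^{1/2}(\Gamma)$-regularity of $\theta=\mu|_\Gamma$ to close the boundary estimates, which accounts for the reduced range of $q$ in three dimensions.
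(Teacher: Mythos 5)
Your proposal is correct and follows essentially the same route as the paper's proof: time-uniform energy bounds, a time-difference estimate combined with the uniform Gronwall lemma (rigorously performed on the approximating scheme of Section~\ref{approx}, exactly as in Lemma~\ref{time-regu}) to control $\partial_t\bm{\varphi}$ in $L^{\infty}(\eta,\infty;\mathcal{H}_{L,\sigma,0}^{-1})\cap L^2_{\mathrm{uloc}}(\eta,\infty;\mathcal{V}_{(0)}^{1})$, recovery of $\bm{\mu}$ from $\mathbf{P}\bm{\mu}=-\mathcal{A}_{L,\sigma}\partial_t\bm{\varphi}$ together with the Miranville--Zelik control of the mean, and finally elliptic estimates for \eqref{eq3.5}--\eqref{eq3.6} with the $\mathcal{L}^q$ bounds on $\bm{\beta}$ obtained by testing with powers and invoking $\mathbf{(A2)}$, which is the content of Lemmas~\ref{regh1}--\ref{regh3} and Remark~\ref{regh1r}. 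One implementation detail to fix: to make the coupling term involving $\partial_{\mathbf n}\varphi$ cancel, the bulk and surface equations must be tested with the trace pair $|\beta(\varphi)|^{q-2}\beta(\varphi)$ and $|\beta(\psi)|^{q-2}\beta(\psi)$ rather than with $|\beta_\Gamma(\psi)|^{q-2}\beta_\Gamma(\psi)$ on the boundary (otherwise the two boundary integrals do not cancel and $\partial_{\mathbf n}\varphi$ is not yet controlled); $\mathbf{(A2)}$ then converts the surface monotone term $\int_\Gamma\beta_\Gamma(\psi)|\beta(\psi)|^{q-2}\beta(\psi)\,\mathrm{d}S$ into an $L^q(\Gamma)$-bound for $\beta(\psi)$, and the stated bound $\beta_\Gamma(\psi)\in L^\infty(\eta,\infty;H_\Gamma)$ is recovered afterwards by comparison in \eqref{eq3.6} once the $\mathcal{V}^{2}$-estimate is in hand, which is precisely the order followed in Lemmas~\ref{regh2} and \ref{regh3}.
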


Next, we show the eventual separation property for weak solutions that holds in both two and three dimensions.

\begin{theorem}
\label{eventual-2}
Suppose that the assumptions in Theorem \ref{eventual} are satisfied. Let  $(\boldsymbol{\varphi},\boldsymbol{\mu})$ be the unique global weak solution to problem \eqref{model} obtained in Proposition \ref{existence}. There exists a constant $\delta_{1}\in(0,1)$ and a sufficiently large time $T_{\mathrm{SP}}>0$ such that
\begin{align}
\Vert \varphi(t)\Vert_{L^{\infty}(\Omega)}\leq 1-\delta_{1},\quad\Vert \psi(t)\Vert_{L^{\infty}(\Gamma)}\leq 1-\delta_{1},\quad\forall \,t\geq T_{\mathrm{SP}}.
\label{eq4.13}
\end{align}
\end{theorem}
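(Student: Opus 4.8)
\textbf{Proof proposal for Theorem \ref{eventual-2}.}
The plan is to exploit the fact, established in Theorem \ref{eventual}, that the weak solution becomes uniformly regular for positive times, combined with the Lyapunov structure of the system to drive the trajectory towards the set of steady states, which one then shows are strictly separated from $\pm 1$. First I would use the energy identity \eqref{formalener} (valid for the regularized solution, hence for $t\ge \eta$ by the propagation of regularity) to conclude that $E(\bm\varphi(t))$ is nonincreasing and bounded below, so $E(\bm\varphi(t))\to E_\infty$ and, integrating the dissipation, that $\nabla\mu$, $\sqrt{\sigma}\,\nabla_\Gamma\theta$, $\chi(L)^{1/2}(\theta-\mu)$ all tend to $0$ in $L^2$ along a suitable sequence $t_n\to\infty$ (indeed in the $L^2_{\mathrm{uloc}}$ sense). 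Using the uniform bounds of Theorem \ref{eventual} for $\bm\varphi$ in $\mathcal V^2$ and $\bm\mu$ in $\mathcal H^1_{L,\sigma}$, together with a compactness argument (Aubin--Lions), I would extract a subsequence with $\bm\varphi(t_n)\to \bm\varphi_\infty=(\varphi_\infty,\psi_\infty)$ strongly in $\mathcal V^1$ (and in $\mathcal L^\infty$ by Sobolev embedding in $d\le 3$ since $\mathcal V^2\hookrightarrow \mathcal L^\infty$ is not available in $3$d, so one should instead get strong $\mathcal H^{2-\epsilon}$ convergence or argue via the $\mathcal V^2$ bound and interpolation), and $\bm\mu(t_n)\rightharpoonup \bm\mu_\infty$ in $\mathcal H^1_{L,\sigma}$ with $\nabla\mu_\infty=0$, hence $\mu_\infty$ constant in $\Omega$; similarly $\theta_\infty=\mu_\infty|_\Gamma$ if $L\in(0,\infty)$ or $L=0$, and $\theta_\infty$ constant on $\Gamma$ if $\sigma>0$.

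The key structural point is then to show that the limit $\bm\varphi_\infty$ solves the stationary problem
\begin{align}
-\Delta \varphi_\infty + \beta(\varphi_\infty)+\pi(\varphi_\infty) = \mu_\infty \ \text{ in }\Omega,\qquad
\partial_{\mathbf n}\varphi_\infty - \Delta_\Gamma\psi_\infty + \beta_\Gamma(\psi_\infty)+\pi_\Gamma(\psi_\infty)=\theta_\infty\ \text{ on }\Gamma,
\notag
\end{align}
with $\varphi_\infty|_\Gamma=\psi_\infty$, obtained by passing to the limit in \eqref{eq3.5}--\eqref{eq3.6} (using that $\beta(\varphi(t_n))$ is bounded in $\mathcal L^q$ by Theorem \ref{eventual} and converges a.e., hence weakly, by a standard monotonicity/Minty argument). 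Next I would prove that \emph{any} solution of this stationary system satisfies $\|\varphi_\infty\|_{L^\infty(\Omega)}\le 1-\delta$ and $\|\psi_\infty\|_{L^\infty(\Gamma)}\le 1-\delta$ for some $\delta>0$: testing the bulk equation with $(\varphi_\infty - (1-2\delta))^+$ and the boundary equation with its trace, using monotonicity of $\beta,\beta_\Gamma$, the Lipschitz bound on $\pi,\pi_\Gamma$, the divergence theorem to combine the two contributions (the normal derivative term cancels against the boundary equation), and the fact that $\mu_\infty,\theta_\infty$ are bounded constants, one gets that the set $\{\varphi_\infty > 1-2\delta\}$ must be empty once $\beta(1-2\delta)$ (resp.\ $\beta_\Gamma(1-2\delta)$) exceeds the fixed bound coming from $\|\mu_\infty\|_{L^\infty}$ and the Lipschitz constants — this is where assumption $\mathbf{(A1)}$ ($\beta,\beta_\Gamma\to+\infty$) and the compatibility $\mathbf{(A2)}$ enter. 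An analogous test with $(\varphi_\infty+1-2\delta)^-$ handles the lower bound. One should also record that $\mu_\infty$ (hence the threshold $\delta$) is controlled uniformly in terms of $E_\infty\le E(\bm\varphi_0)$ and $\overline m_0$, via the elliptic estimate for the stationary problem and the mean-value bound $|\overline m_0|<1$.

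Finally, I would upgrade convergence along the sequence $t_n$ to convergence of the whole trajectory in $\mathcal L^\infty$ — or at least conclude \eqref{eq4.13} directly — as follows: since the $\omega$-limit set $\omega(\bm\varphi_0)$ (in the $\mathcal V^1$ topology, say) is nonempty, compact, connected, and consists entirely of strictly separated steady states, and since $t\mapsto \|\varphi(t)\|_{L^\infty(\Omega)}$ varies continuously (by the $\mathcal V^2$-bound and continuity in a lower norm, $\bm\varphi\in C([\eta,\infty);\mathcal V^1)$ plus interpolation), the distance $\mathrm{dist}_{\mathcal L^\infty}(\bm\varphi(t),\omega(\bm\varphi_0))\to 0$; choosing $\delta_1<\delta$ and using that every element of $\omega(\bm\varphi_0)$ is bounded by $1-\delta$ in sup norm, one deduces the existence of $T_{\mathrm{SP}}$ with \eqref{eq4.13}. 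The main obstacle I anticipate is twofold: first, making the strong $\mathcal L^\infty$ (or at least $C(\overline\Omega)$, $C(\Gamma)$) convergence rigorous in the three–dimensional case, where $\mathcal V^2\not\hookrightarrow \mathcal L^\infty$, which likely requires bootstrapping the stationary elliptic regularity (the steady states actually lie in $\mathcal V^2\cap \mathcal L^\infty$ by De Giorgi or by $\beta(\varphi_\infty)\in L^\infty$, recovered \emph{a posteriori} from the separation) and then arguing that the closeness of $\bm\varphi(t_n)$ to a steady state in $\mathcal H^{2-\epsilon}$ forces sup-norm closeness via the uniform-in-$n$ $\mathcal V^2$ bound and compactness of $\mathcal V^2\hookrightarrow C$ in $d\le 3$; second, ensuring the separation threshold $\delta$ for steady states is uniform over the whole (possibly non-compact-looking but in fact bounded) set of limit points — this is handled by the uniform energy bound $E_\infty\le E(\bm\varphi_0)$ which caps $\|\mu_\infty\|$ and hence $\delta$ from below independently of the chosen subsequence.
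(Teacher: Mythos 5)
Your proposal is correct and follows essentially the same route as the paper's first proof (the ``dynamic approach''): relative compactness of the trajectory in $\mathcal{H}^{2r}$ with $r\in(3/4,1)$, identification of the $\omega$-limit set with steady states via the strict Lyapunov functional and the stationary system with constant $\mu_\infty=\theta_\infty$, uniform strict separation of the steady states (which the paper imports from \cite[Lemma 4.1]{FW} rather than reproving; your truncation-test argument is the standard way to establish it), and finally the embedding $\mathcal{H}^{2r}\hookrightarrow C(\overline{\Omega})\times C(\Gamma)$ to transfer the separation to the trajectory for large times. Your only slip is the parenthetical claim that $\mathcal{V}^{2}\not\hookrightarrow\mathcal{L}^{\infty}$ in three dimensions --- in fact $H^{2}(\Omega)\hookrightarrow C(\overline{\Omega})$ for $d\le 3$, which is exactly why the paper's conclusion is immediate and why your anticipated obstacle dissolves --- and note that the paper additionally records a second, independent proof via a De Giorgi iteration exploiting the eventual smallness of $\Vert\mathbf{P}\boldsymbol{\mu}(t)\Vert_{\mathcal{H}^{1}_{L,\sigma,0}}$.
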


Assume in addition, the following growth condition for the bulk potential $\beta$ near the singular points $\pm 1$ (see \cite{GP}):
\begin{description}
\item[$\mathbf{(A5)}$] As $\delta\rightarrow0^{+}$, for some $\kappa>1/2$, it holds
\begin{align}
\frac{1}{\beta(1-2\delta)}=O\Big(\frac{1}{|\ln \delta|^{\kappa}}\Big), \quad\frac{1}{|\beta(-1+2\delta)|}=O\Big(\frac{1}{|\ln \delta|^{\kappa}}\Big).
\label{addi}
\end{align}
\end{description}

With $\mathbf{(A5)}$, we can prove the instantaneous separation property for weak solutions in two dimensions.

\begin{theorem}
\label{intantaneous}
Suppose that $\Omega\subset\mathbb{R}^{2}$ is a bounded domain with smooth boundary $\Gamma$, the assumptions $\mathbf{(A1)}$--$\mathbf{(A5)}$ are satisfied and the parameters $(L,\sigma)$ fulfill \eqref{choice}. Let $(\boldsymbol{\varphi},\boldsymbol{\mu})$ be the unique global weak solution to problem \eqref{model} obtained in Proposition \ref{existence}. Then for any $\eta>0$, there exists a constant $\delta_{2}\in(0,1)$, depending on $\eta$, $\overline{m}_{0}$, $E(\boldsymbol{\varphi}_{0})$, $\Omega$ and $\Gamma$, such that
\begin{align}
\Vert \varphi(t)\Vert_{L^{\infty}(\Omega)}\leq1-\delta_{2},\quad\Vert \psi(t)\Vert_{L^{\infty}(\Gamma)}\leq1-\delta_{2},\quad\forall\;t\geq\eta.\label{4.31}
\end{align}
\end{theorem}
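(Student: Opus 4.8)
\textbf{Proof proposal for Theorem \ref{intantaneous}.}
The plan is to follow the De Giorgi iteration scheme of \cite{GP}, adapted to the bulk–surface transmission structure. Fix $\eta>0$. By Theorem \ref{eventual}, from time $\eta/2$ on we already have $\bm\varphi\in L^\infty(\eta/2,\infty;\mathcal V^2)$, $\bm\mu\in L^\infty(\eta/2,\infty;\mathcal H^1_{L,\sigma})$ and $\bm\beta(\bm\varphi)\in L^\infty(\eta/2,\infty;\mathcal L^2)$, so all quantities below are uniformly bounded in time; in particular there is a constant $M_0$ (depending only on $\eta,\overline m_0,E(\bm\varphi_0),\Omega,\Gamma$) with $\|\mu(t)\|_{H^1(\Omega)}+\|\theta(t)\|_{L^2(\Gamma)}+\|\bm\varphi(t)\|_{\mathcal V^2}\le M_0$ for $t\ge\eta/2$. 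Writing the second equation of \eqref{model} as $-\Delta\varphi+\beta(\varphi)=\mu-\pi(\varphi)=:f$ with $\|f(t)\|_{L^2(\Omega)}\le M_1$, I will test this elliptic problem, at a fixed time $t\ge\eta$, with the truncated function $(\varphi-k)_+=\max\{\varphi-k,0\}$ for levels $k\in[1-\tfrac{\varepsilon_0}{2},1)$ to be chosen small. The boundary term produced by integration by parts is $\int_\Gamma \partial_{\mathbf n}\varphi\,(\varphi-k)_+\,\mathrm dS$; using the boundary equation $\partial_{\mathbf n}\varphi=\theta+\Delta_\Gamma\psi-\beta_\Gamma(\psi)-\pi_\Gamma(\psi)$, the term $-\int_\Gamma\Delta_\Gamma\psi\,(\psi-k)_+\,\mathrm dS=\int_\Gamma|\nabla_\Gamma(\psi-k)_+|^2\,\mathrm dS\ge0$ has a good sign, $-\int_\Gamma\beta_\Gamma(\psi)(\psi-k)_+\le -\beta_\Gamma(k)\int_\Gamma(\psi-k)_+$ is also favorable (it only helps, since $\beta_\Gamma\ge\beta_\Gamma(k)$ on the set $\{\psi>k\}$), and the remaining boundary contributions $\int_\Gamma(\theta-\pi_\Gamma(\psi))(\psi-k)_+\,\mathrm dS$ are controlled by $M_0,K_\Gamma$ together with the $L^2(\Gamma)$–trace of $(\varphi-k)_+$ estimated via interpolation and the bound on $\|\varphi\|_{H^2(\Omega)}$.

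The key inequality is then, abbreviating $A_k(t)=\{x\in\Omega:\varphi(x,t)>k\}$ and $y_k=\|(\varphi-k)_+\|_{L^2(\Omega)}^2+\|(\psi-k)_+\|_{L^2(\Gamma)}^2$,
\begin{align}
\int_{A_k}|\nabla(\varphi-k)_+|^2\,\mathrm dx+\beta(k)\int_{A_k}(\varphi-k)_+\,\mathrm dx
\le C\,M_1\|(\varphi-k)_+\|_{L^2(\Omega)}+C(M_0,K_\Gamma)\,\|(\psi-k)_+\|_{L^2(\Gamma)},\notag
\end{align}
where I have dropped the nonnegative boundary Dirichlet and $\beta_\Gamma$ terms. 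Because assumption $\mathbf{(A2)}$ gives $\beta_\Gamma(r)\ge \varrho^{-1}(\beta(r)-c_0)$, the same $k$-dependent coercive factor appears on the boundary, so both terms on the left carry a factor that blows up like $\beta(k)\to+\infty$ as $k\to1^-$. This is exactly where $\mathbf{(A5)}$ enters: the condition $1/\beta(1-2\delta)=O(|\ln\delta|^{-\kappa})$ with $\kappa>1/2$ means $\beta(k)^{-1}$ decays, as $k\uparrow1$, faster than any negative power of $|\ln(1-k)|$ raised to $1/2$, which is precisely the summability threshold needed to close the De Giorgi recursion. Using the two-dimensional Sobolev/Ladyzhenskaya inequality $\|(\varphi-k)_+\|_{L^2}\le C|A_k|^{1/2-1/p}\|(\varphi-k)_+\|_{W^{1,p}}$ together with a Gagliardo–Nirenberg bound, one converts the above into a nonlinear recursion of the form $y_{k_{n+1}}\le C\,b^{n}\,\beta(k_\infty)^{-a}\,y_{k_n}^{1+\varepsilon}$ along a sequence of levels $k_n\uparrow 1-\varepsilon_0$ with $\varepsilon>0$; standard fast-geometric-convergence lemmas (the version quoted in the Appendix, e.g. Lemma~\ref{DeGiorgi} or its analogue) then yield $y_{k_\infty}=0$ provided $\varepsilon_0$ is chosen small enough (this is where $\beta$ being large at level $1-\varepsilon_0$ supplies the smallness), i.e. $\varphi(t)\le 1-\varepsilon_0$ a.e.\ in $\Omega$ and $\psi(t)\le 1-\varepsilon_0$ a.e.\ on $\Gamma$. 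The bound from below by $-1+\varepsilon_0$ follows verbatim by testing with $(\varphi-k)_-$ and using the second half of $\mathbf{(A5)}$; setting $\delta_2=\varepsilon_0$ completes the proof.

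I expect the main obstacle to be the careful handling of the boundary terms so that the De Giorgi iteration genuinely closes with a single, $t$-independent $\delta_2$. Two points require care. First, the coupling $\varphi|_\Gamma=\psi$ forces me to iterate a \emph{coupled} pair of level-set energies $(\|(\varphi-k)_+\|_{L^2(\Omega)},\|(\psi-k)_+\|_{L^2(\Gamma)})$ rather than a scalar one; I must verify that the coercive factor $\beta(k)$ dominates on \emph{both} components simultaneously, which is guaranteed by $\mathbf{(A2)}$ and is the reason the hypotheses only impose $\mathbf{(A5)}$ on $\beta$ (the worst singularity) and not separately on $\beta_\Gamma$. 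Second, the boundary term $\int_\Gamma(\theta-\pi_\Gamma(\psi))(\psi-k)_+\,\mathrm dS$ must be absorbed: since there is no uniform $L^\infty(\Gamma)$ bound on $\theta$ a priori, I estimate it by $\|\theta-\pi_\Gamma(\psi)\|_{L^2(\Gamma)}\|(\psi-k)_+\|_{L^2(\Gamma)}\le C M_0 |A_k\cap\Gamma|^{1/2}\|(\psi-k)_+\|_{L^2(\Gamma)}$ and then by a trace-interpolation $\|(\psi-k)_+\|_{L^2(\Gamma)}\le C\|(\varphi-k)_+\|_{L^2(\Omega)}^{1/2}\|(\varphi-k)_+\|_{H^1(\Omega)}^{1/2}$, which keeps everything inside the scheme at the cost of a slightly smaller gain exponent $\varepsilon$ — still positive, hence harmless. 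The case $\sigma=0$, $L=0$ needs no special treatment here because $\theta$ is controlled only through its $L^2(\Gamma)$ norm, which is supplied uniformly by Theorem \ref{eventual}; the surface-diffusion term $\sigma\Delta_\Gamma\theta$ never enters the elliptic test at fixed time. Finally, the dependence of $\delta_2$ on $\eta$ is only through the constant $M_0$ from the regularity propagation, exactly as stated.
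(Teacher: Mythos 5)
Your overall strategy (testing \eqref{eq3.5}, \eqref{eq3.6} with positive-part truncations, using $\mathbf{(A2)}$ so that the boundary inherits the coercivity of $\beta$, and running a De Giorgi scheme) is the same as the paper's, but the heart of the argument is missing and the step you do sketch would not close. First, the recursion you claim, $y_{k_{n+1}}\le C\,b^{n}\beta(k_\infty)^{-a}y_{k_n}^{1+\varepsilon}$, is never derived, and the mechanism you propose cannot produce it: bounding $\int_\Omega\mu\,(\varphi-k)_+\,\mathrm{d}x$ by $\|\mu\|_{L^2}\|(\varphi-k)_+\|_{L^2}$ and then interpolating gives at best an exponent $\le 1$ on $y_{k_n}$ (try it: the $H^1$ energy on the left is linear in $\|(\varphi-k)_+\|_{L^2}$ on the right, and Chebyshev on $|A_{k_{n+1}}|$ only restores exponent one). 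The paper instead iterates on the \emph{measures} $z_n=|A_n|+|B_n|$ and obtains the strictly superlinear exponent $\tfrac74-\tfrac{3}{2p}$ by combining the pointwise lower bound $\varphi_n\ge\delta/2^{n+1}$ on $A_{n+1}$ with an $L^3$/Sobolev estimate, and — crucially — by estimating the chemical-potential term through $\|\mu\|_{L^p(\Omega)}\le C\sqrt p\,\|\mu\|_{V}$ (Lemma \ref{critical}; for $\sigma=0$ it also needs $\theta=\mu|_\Gamma\in H^{1/2}(\Gamma)$, so your remark that the $L^2(\Gamma)$ bound on $\theta$ suffices is not how the case $\mathbf{(B3)}$ is handled).

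Second, you have misidentified where $\mathbf{(A5)}$ enters. You assert that the largeness of $\beta(k)$ on the left-hand side "supplies the smallness" needed to start the iteration; if that were so, the trivial bound $y_{k_0}\le(2\delta)^2(|\Omega|+|\Gamma|)$ would close the argument with no hypothesis on $\beta$ at all, which cannot be right. In the paper the coercive terms $\beta(1-2\delta)\int\varphi_n$, $\beta_\Gamma(1-2\delta)\int\psi_n$ are used only to check a sign (so they can be dropped), the prefactor of the recursion \emph{blows up} like $\delta^{-3/2}$, and the required smallness $z_0\lesssim\delta^{2p/(p-2)}$ must be verified separately. That verification is the real role of $\mathbf{(A5)}$: one bounds $z_0\le\|\beta(\varphi)\|_{L^q}^q/\beta(1-2\delta)^q\le\widetilde C_0^{\,q}q^{q/2}/\beta(1-2\delta)^q$ using the uniform $L^q$ bound of Lemma \ref{regh2} (with its explicit $\sqrt q$ constant), chooses $\delta=e^{-q}$, and uses $1/\beta(1-2\delta)\le C_\beta|\ln\delta|^{-\kappa}=C_\beta q^{-\kappa}$ so that $z_0\le(\widetilde C_0C_\beta)^qq^{-(\kappa-1/2)q}$ decays superexponentially in $q$ and beats the exponentially small threshold; this is exactly where $\kappa>1/2$ is needed. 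None of this Chebyshev/$L^q$/$\delta=e^{-q}$ argument appears in your proposal, so the proof as written has a genuine gap at its decisive step.
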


\begin{remark}\rm
Combining \eqref{assum1} with \eqref{addi}, we also find a growth condition on $\beta_\Gamma$.
It is straightforward to check that the logarithmic potential \eqref{logari} satisfies \eqref{addi}. Besides, with $\kappa>1/2$, the assumption $\mathbf{(A5)}$ accommodates entropy densities with
milder singularities than \eqref{logari}.
\end{remark}

\begin{remark}\label{rem-2d}\rm
The instantaneous strict separation property for the specific case
$(\mathbf{B2})$ has been obtained in \cite{FW} with the aid of the Moser-Trudinger inequality in two dimensions.
To achieve this goal, it was required in \cite{FW} that both derivatives $\beta'$, $\beta_{\Gamma}'$ are convex on $(-1,1)$, and an additional pointwise assumption was imposed on the derivative of $\beta$ apart from its strict positivity, i.e., there exists a positive constant $\widetilde{c}_{0}$ such that
\begin{align*}
|\beta'(r)|\leq e^{\widetilde{c}_{0}|\beta(r)|+\widetilde{c}_{0}},\quad\forall\,r\in(-1,1).
\end{align*}
Besides, the argument in \cite{FW} essentially relied on the assumption $\sigma>0$ since the diffusive term $\sigma\Delta_{\Gamma}\theta$ provides a regularizing effect on the boundary leading to $\theta\in V_{\Gamma}$. In Theorem \ref{intantaneous}, we establish the instantaneous separation property for all the three cases $\mathbf{(B1)}$--$\mathbf{(B3)}$ by extending the De Giorgi's iteration scheme in \cite{GP}, under a milder assumption on $\beta$ (i.e., \eqref{addi}) and without the regularizing effect $\sigma>0$ when $L=0$.
\end{remark}

The eventual separation property obtained in Theorem \ref{eventual-2} enables us to prove that every global weak solution converges to a single equilibrium as $t\to \infty$. Moreover precisely, we have
\begin{theorem}
\label{equilibrium}
Suppose that the assumptions in Theorem \ref{eventual} are satisfied. In addition, we assume that $\beta$, $\beta_{\Gamma}$ are real analytic on $(-1,1)$ and $\pi$, $\pi_{\Gamma}$ are real analytic on $\mathbb{R}$.
Let $(\boldsymbol{\varphi},\boldsymbol{\mu})$ be the unique global weak solution to problem \eqref{model} obtained in Proposition \ref{existence}.
Then it holds
\begin{align*}
\lim_{t\rightarrow\infty}\Vert\boldsymbol{\varphi}(t)-\boldsymbol{\varphi}_{\infty}\Vert_{\mathcal{V}^{2}}=0,
\end{align*}
where $\boldsymbol{\varphi}_{\infty}:=(\varphi_{\infty},\psi_{\infty})$ is a steady state satisfying the elliptic problem
\begin{align}
\left\{
\begin{array}{ll}
\mu_{\infty}=-\Delta \varphi_{\infty}+\beta(\varphi_{\infty})+\pi(\varphi_{\infty}),
&\quad \text{a.e. in }\Omega,\\
\theta_{\infty}=-\Delta_{\Gamma}\psi_{\infty}+\beta_{\Gamma}(\psi_{\infty}) +\pi_{\Gamma}(\psi_{\infty})+\partial_{\mathbf{n}}\varphi_{\infty},
&\quad \text{a.e. on }\Gamma,\\
\varphi_{\infty}|_{\Gamma}=\psi_{\infty},
&\quad \text{a.e. on }\Gamma,\\
\overline{m}(\boldsymbol{\varphi}_{\infty})=\overline{m}_{0},
\end{array}\right.
\notag
\end{align}
with constants
\begin{align}
\mu_{\infty}=\theta_{\infty}=\frac{1}{|\Omega|+|\Gamma|} \Big[\int_{\Omega}\big(\beta(\varphi_{\infty})+\pi(\varphi_{\infty})\big)\,\mathrm{d}x +\int_{\Gamma}\big(\beta_{\Gamma}(\psi_{\infty})+\pi_{\Gamma}(\psi_{\infty})\big)\,\mathrm{d}S\Big].\notag
\end{align}
 Moreover, we have the following estimate on the convergence rate
\begin{align}
\Vert\boldsymbol{\varphi}(t)-\boldsymbol{\varphi}_{\infty}\Vert_{\mathcal{V}^{1}}\leq C\big(1+t\big)^{-\frac{\theta^{*}}{1-2\theta^{*}}},\quad\forall\;t\geq 1,\label{converrate}
\end{align}
where $\theta^{*}\in(0,1/2)$ is a constant depending on $\boldsymbol{\varphi}_{\infty}$, the positive constant $C$ may depend on $E(\boldsymbol{\varphi}_{0})$, $\boldsymbol{\varphi}_{\infty}$, $\overline{m}_{0}$, $\Omega$, $\Gamma$ and coefficients of the system.
\end{theorem}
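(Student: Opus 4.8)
The plan is to run the classical {\L}ojasiewicz--Simon argument adapted to the bulk-surface coupled setting. First I would exploit Theorem~\ref{eventual-2}: after the time $T_{\mathrm{SP}}$ the solution satisfies $\Vert\varphi(t)\Vert_{L^\infty(\Omega)}\le 1-\delta_1$, $\Vert\psi(t)\Vert_{L^\infty(\Gamma)}\le 1-\delta_1$, so on the trajectory the singular nonlinearities $\beta$, $\beta_\Gamma$ together with their derivatives are uniformly bounded, i.e.\ they behave like globally Lipschitz (indeed $C^\infty$, and by the extra hypothesis real analytic) functions when restricted to $[-1+\delta_1,1-\delta_1]$. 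Combined with the higher-order regularity from Theorem~\ref{eventual} ($\bm{\varphi}\in L^\infty(\eta,\infty;\mathcal{V}^2)$, $\bm\mu\in L^\infty(\eta,\infty;\mathcal{H}^1_{L,\sigma})$, $\partial_t\bm\varphi\in L^\infty(\eta,\infty;\mathcal{H}^{-1}_{L,\sigma,0})\cap L^2_{\mathrm{uloc}}(\eta,\infty;\mathcal{V}^1_{(0)})$), a bootstrap using elliptic regularity for the stationary bulk-surface system would upgrade this to $\bm\varphi\in L^\infty(\eta,\infty;\mathcal{V}^3)$ or at least give precompactness of the trajectory $\{\bm\varphi(t):t\ge \eta\}$ in $\mathcal{V}^2$. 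The energy identity \eqref{formalener} (now rigorous given the regularity) shows $E(\bm\varphi(t))$ is nonincreasing and bounded below, hence converges; and $\int_0^\infty\big(\Vert\nabla\mu\Vert_{H}^2+\sigma\Vert\nabla_\Gamma\theta\Vert_{H_\Gamma}^2+\chi(L)\Vert\theta-\mu\Vert_{H_\Gamma}^2\big)\,\mathrm{d}t<\infty$. From here, a standard argument shows the $\omega$-limit set $\omega(\bm\varphi_0)$ is nonempty, compact, connected in $\mathcal{V}^2$, consists entirely of steady states (the elliptic problem in the statement), and $E$ is constant on it.

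Next I would establish the {\L}ojasiewicz--Simon inequality near an arbitrary equilibrium $\bm\varphi_\infty\in\omega(\bm\varphi_0)$. Because $\overline{m}(\bm\varphi(t))=\overline{m}_0$ is conserved, the natural functional to analyze is the restriction of $E$ to the affine constraint set $\{\bm y:\overline{m}(\bm y)=\overline{m}_0\}\cap\mathcal{V}^1$; the Euler--Lagrange operator is precisely $\bm\mu-\overline{m}(\bm\mu)\bm1$ with $\bm\mu=(-\Delta\varphi+\beta(\varphi)+\pi(\varphi),\,\partial_{\mathbf n}\varphi-\Delta_\Gamma\psi+\beta_\Gamma(\psi)+\pi_\Gamma(\psi))$. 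Since $\bm\varphi_\infty$ is strictly separated (every element of $\omega(\bm\varphi_0)$ is, by the eventual separation applied along the trajectory, or directly by an elliptic maximum-principle-type argument on the steady state), on a $\mathcal{V}^2$-neighborhood of $\bm\varphi_\infty$ the map $\bm y\mapsto E(\bm y)$ is real analytic (here the real analyticity of $\beta,\beta_\Gamma$ on $(-1,1)$ and of $\pi,\pi_\Gamma$ on $\mathbb R$ is used), its second derivative is Fredholm of index zero on the relevant spaces (the linearization is $-\Delta+\beta'(\varphi_\infty)+\pi'(\varphi_\infty)$ in the bulk with the coupled Wentzell-type boundary operator, a compact perturbation of an invertible elliptic operator), so the abstract {\L}ojasiewicz--Simon theorem (cf.\ the version in the Appendix) applies: there exist $\theta^*\in(0,1/2)$, $C_{\mathrm{LS}}>0$ and a neighborhood $\mathcal{U}$ of $\bm\varphi_\infty$ in $\mathcal{V}^2$ such that
\begin{align}
\big|E(\bm y)-E(\bm\varphi_\infty)\big|^{1-\theta^*}\le C_{\mathrm{LS}}\,\big\Vert \bm\mu(\bm y)-\overline{m}(\bm\mu(\bm y))\bm1\big\Vert_{(\mathcal{H}^1_{L,\sigma})'},\qquad\forall\,\bm y\in\mathcal{U}.\notag
\end{align}

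Then comes the standard convergence machinery. Using compactness of the trajectory in $\mathcal{V}^2$ and connectedness of $\omega(\bm\varphi_0)$, a covering argument reduces matters to a single neighborhood $\mathcal{U}$ of one equilibrium $\bm\varphi_\infty$, entered by $\bm\varphi(t)$ for all $t\ge t_0$ large. On this interval I would test the equation with $\partial_t\bm\varphi$ (legitimate by the regularity), combine the energy decay $-\frac{\mathrm d}{\mathrm dt}E(\bm\varphi(t))=\Vert\partial_t\bm\varphi\Vert_{\mathcal{H}^{-1}_{L,\sigma,0}}^2$ (after using \eqref{defn2.1} to rewrite the dissipation via the solution operator $\mathcal{A}_{L,\sigma}$) with the {\L}ojasiewicz inequality, and the elementary fact that $\Vert\partial_t\bm\varphi\Vert_{\mathcal{H}^{-1}_{L,\sigma,0}}$ controls $\Vert\bm\mu-\overline{m}(\bm\mu)\bm1\Vert_{(\mathcal{H}^1_{L,\sigma})'}$ from below up to a constant, to derive a differential inequality of the form $-\frac{\mathrm d}{\mathrm dt}\big(E(\bm\varphi(t))-E(\bm\varphi_\infty)\big)^{\theta^*}\ge c\,\Vert\partial_t\bm\varphi(t)\Vert_{\mathcal{H}^{-1}_{L,\sigma,0}}$. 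Integrating shows $\partial_t\bm\varphi\in L^1(t_0,\infty;\mathcal{H}^{-1}_{L,\sigma,0})$, hence $\bm\varphi(t)$ converges in $\mathcal{H}^{-1}_{L,\sigma,0}$; interpolating with the uniform $\mathcal{V}^2$-bound and using precompactness upgrades this to convergence in $\mathcal{V}^2$ to a limit, which must be $\bm\varphi_\infty$, so $\omega(\bm\varphi_0)=\{\bm\varphi_\infty\}$ is a singleton. The constants $\mu_\infty=\theta_\infty$ follow by integrating the steady-state equations over $\Omega$ and $\Gamma$ and using $\varphi_\infty|_\Gamma=\psi_\infty$ together with the divergence theorem. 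The convergence rate \eqref{converrate} in $\mathcal{V}^1$ then follows from the standard argument comparing the decay of $E(\bm\varphi(t))-E(\bm\varphi_\infty)$ with a power of $t$ (obtained from the differential inequality and the value of $\theta^*$) and interpolating; the $\mathcal{H}^{-1}$-rate is $(1+t)^{-\theta^*/(1-2\theta^*)}$ and the $\mathcal{V}^1$-rate is obtained by interpolation between $\mathcal{H}^{-1}$ and $\mathcal{V}^2$ absorbing a possibly worse exponent, or more directly by also controlling $\int_t^\infty\Vert\partial_s\bm\varphi\Vert\,\mathrm ds$.

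The main obstacle I anticipate is verifying the Fredholm/analyticity hypotheses of the {\L}ojasiewicz--Simon theorem in the coupled bulk--surface functional framework with the three distinct parameter regimes $(\mathbf{B1})$--$(\mathbf{B3})$: one must check that the linearized stationary operator (bulk Laplacian plus $\beta'(\varphi_\infty)$, coupled through the trace $\varphi|_\Gamma=\psi$ to the surface operator $-\Delta_\Gamma+\beta_\Gamma'(\psi_\infty)$, with the transmission condition $L\partial_{\mathbf n}\mu=\theta-\mu$) is self-adjoint and Fredholm of index zero on $\mathcal{V}^1_{(0)}$ (resp.\ $\widetilde{\mathcal{V}}^1_{(0)}$ when $L=\sigma=0$), and that the second Fréchet derivative of the constrained energy is analytic as a map into $\mathcal{L}(\mathcal{V}^2,(\mathcal{V}^1)')$ — the strict separation is exactly what makes the nonlinear terms analytic, so the delicate point is uniformity of the neighborhood and constants as one varies $\bm\varphi_\infty$ over the compact set $\omega(\bm\varphi_0)$, together with the careful identification of the correct dual norm (the $(\mathcal{H}^1_{L,\sigma})'$ norm, which degenerates differently for $L>0$ versus $L=0$) in which the gradient inequality is stated. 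The analyticity of the Nemytskii operators $r\mapsto\beta(r)$ on the separated range requires a uniform lower bound $1-\delta_1$ independent of the particular equilibrium, which Theorem~\ref{eventual-2} provides along the trajectory but must be propagated to every element of $\omega(\bm\varphi_0)$ by passing to the limit.
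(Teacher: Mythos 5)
Your proposal is correct and follows essentially the same route as the paper: eventual separation (Theorem \ref{eventual-2}) renders the nonlinearities analytic on the trajectory, the $\mathcal{H}^{2+r}$-bound gives precompactness in $\mathcal{V}^2$, the $\omega$-limit set consists of steady states on which $E$ is constant, and the {\L}ojasiewicz--Simon inequality combined with the energy identity \eqref{Energyequ} and the relation $\Vert\partial_t\bm{\varphi}\Vert_{\mathcal{H}^{-1}_{L,\sigma,0}}=\Vert\mathbf{P}\bm{\mu}\Vert_{\mathcal{H}^{1}_{L,\sigma,0}}$ yields $\partial_t\bm{\varphi}\in L^1(T_{\mathrm{SP}},\infty;\mathcal{H}^{-1}_{L,\sigma,0})$, convergence, and the rate. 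The only cosmetic difference is that you state the gradient inequality in the dual norm $(\mathcal{H}^1_{L,\sigma})'$, whereas the paper's Lemma \ref{LS} uses the $\mathcal{L}^2$-norm of the projected potential, which is a weaker (hence easier) form that already suffices since $\Vert\mathbf{P}\bm{\mu}\Vert_{\mathcal{H}^{1}_{L,\sigma,0}}$ dominates $\Vert\mathbf{P}\bm{\mu}\Vert_{\mathcal{L}^2}$ by the generalized Poincar\'e inequality.
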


\section{Regularity of global weak solutions}
\setcounter{equation}{0}

In this section, we prove Theorem \ref{eventual} on the instantaneous regularizing property of global weak solutions for $t>0$.

\subsection{The approximating problem}
\label{approx}
For convenience of the subsequent analysis, we recall the approximating problem as in \cite{CF15,LvWu}. For every $\varepsilon\in (0,1)$, set $\beta_{\varepsilon}$, $\beta_{\Gamma,\varepsilon}:\mathbb{R}\rightarrow\mathbb{R}$, along with the associated resolvent operators $J_{\varepsilon}$, $J_{\Gamma,\varepsilon}:\mathbb{R}\rightarrow\mathbb{R}$ by
\begin{align*}
&\beta_{\varepsilon}(r):=\frac{1}{\varepsilon}\big(r-J_{\varepsilon}(r)\big) :=\frac{1}{\varepsilon}\big(r-(I+\varepsilon\beta)^{-1}(r)\big), \\
&\beta_{\Gamma,\varepsilon}(r):=\frac{1}{\varepsilon\varrho}\big(r-J_{\Gamma,\varepsilon}(r)\big) :=\frac{1}{\varepsilon\varrho}\big(r-(I+\varepsilon\varrho\beta_{\Gamma})^{-1}(r)\big),
\end{align*}
for all $r\in\mathbb{R}$, where $\varrho>0$ is the coefficient given in the condition \eqref{assum1} (see, e.g., \cite{CC,CF15}).
The related Moreau-Yosida regularizations $\widehat{\beta}_{\varepsilon}$, $\widehat{\beta}_{\Gamma,\varepsilon}$ of $\widehat{\beta}$, $\widehat{\beta}_{\Gamma}:\mathbb{R}\rightarrow\mathbb{R}$ are then given by (see, e.g., \cite{R.E.S})
\begin{align}
&\widehat{\beta}_{\varepsilon}(r) :=\inf_{s\in\mathbb{R}}\left\{\frac{1}{2\varepsilon}|r-s|^{2} +\widehat{\beta}(s)\right\}=\frac{1}{2\varepsilon}|r-J_{\varepsilon}(r)|^{2} +\widehat{\beta}\big(J_{\varepsilon}(r)\big) =\int_{0}^{r}\beta_{\varepsilon}(s)\,\mathrm{d}s,
\notag\\
&\widehat{\beta}_{\Gamma,\varepsilon}(r) :=\inf_{s\in\mathbb{R}}\left\{\frac{1}{2\varepsilon\varrho}|r-s|^{2} +\widehat{\beta}_{\Gamma}(s)\right\}=\int_{0}^{r}\beta_{\Gamma,\varepsilon}(s)\,\mathrm{d}s.
\notag
\end{align}
Define the projection operator
\begin{align*}
    \mathbf{P}:\mathcal{L}^{2}\rightarrow\mathcal{L}_{(0)}^{2},\quad \bm{y}\mapsto \bm{y}-\overline{m}(\bm{y})\bm{1}.
\end{align*}
Let us consider the following approximating problem with $\varepsilon\in (0,1)$:
\begin{align}
&\varepsilon\boldsymbol{\omega}_{\varepsilon}'(t)+\mathcal{A}_{L,\sigma}\big(\boldsymbol{\omega}_{\varepsilon}'(t)\big)+\partial\Phi\big(\boldsymbol{\omega}_{\varepsilon}(t)\big)
 =\mathbf{P}\left(-\boldsymbol{\beta}_{\varepsilon}\big(\boldsymbol{\varphi}_{\varepsilon}(t)\big)-\boldsymbol{\pi}\big(\boldsymbol{\varphi}_{\varepsilon}(t)\big)\right) &&\text{in }\mathcal{L}^{2}_{(0)},\ \text{for a.a. }t\in(0,T),\label{appro}\\
&\boldsymbol{\mu}_{\varepsilon}(t):=\varepsilon\boldsymbol{\omega}_{\varepsilon}'(t)+\partial\Phi\big(\boldsymbol{\omega}_{\varepsilon}(t)\big) +\boldsymbol{\beta}_{\varepsilon}\big(\boldsymbol{\varphi}_{\varepsilon}(t)\big)+\boldsymbol{\pi}\big(\boldsymbol{\varphi}_{\varepsilon}(t)\big) &&\text{in }\mathcal{L}^{2},\ \text{for a.a. }t\in(0,T),\label{2.12}\\
&\boldsymbol{\omega}_{\varepsilon}(0)=\boldsymbol{\omega}_{0} =\boldsymbol{\varphi}_{0}-\overline{m}_{0}\boldsymbol{1}&& \text{in }\mathcal{L}_{(0)}^{2},\label{approini}
\end{align}
where $\boldsymbol{\varphi}_{\varepsilon}=\boldsymbol{\omega}_{\varepsilon}+\overline{m}_{0}\boldsymbol{1}$
and $\boldsymbol{\beta}_{\varepsilon}=(\beta_{\varepsilon},\beta_{\Gamma,\varepsilon})$.
In \eqref{appro} and \eqref{2.12}, $\Phi:\mathcal{L}_{(0)}^{2}\rightarrow[0,\infty]$ is a proper, lower semi-continuous and convex functional such that
\begin{align}
\Phi(\boldsymbol{y}):=\left\{
    \begin{array}{ll}
     \displaystyle{\frac{1}{2}\int_{\Omega}|\nabla y|^{2}\,\mathrm{d}x+ \frac{1}{2}\int_{\Gamma}|\nabla_{\Gamma} y_{\Gamma}|^{2}\,\mathrm{d}S,}   &\text{if }\boldsymbol{y}\in\mathcal{V}_{(0)}^{1},  \\[2mm]
     +\infty,    & \text{otherwise}.
    \end{array}\right.\notag
\end{align}
The subdifferential $\partial\Phi$ of $\Phi$ is given by $\partial\Phi(\boldsymbol{y})=(-\Delta y,\partial_{\mathbf{n}}y-\Delta_{\Gamma}y_{\Gamma})$ with $D(\partial\Phi)=\mathcal{V}_{(0)}^{2}$ (see, e.g., \cite[Lemma C]{CF15}).
\smallskip

\begin{proof}[\textbf{A sketch of the proof for Proposition \ref{existence}}] Applying the abstract theory of doubly nonlinear evolutions (cf. \cite{CV}) and the contraction mapping principle, we can conclude that there exists a unique solution
$$
\boldsymbol{\omega}_{\varepsilon}\in H^{1}(0,T;\mathcal{L}_{(0)}^{2})\cap L^{\infty}(0,T;\mathcal{V}_{(0)}^{1})\cap L^{2}(0,T;\mathcal{V}_{(0)}^{2})
$$
satisfying \eqref{appro}--\eqref{approini} (cf. \cite{CF15,LvWu}). By \eqref{2.12}, we further obtain  $\boldsymbol{\mu}_{\varepsilon}\in L^{2}(0,T;\mathcal{H}_{L,\sigma}^{1})$.
In this manner, one can obtain a family of approximating solutions $(\boldsymbol{\varphi}_{\varepsilon},\boldsymbol{\mu}_{\varepsilon})$ satisfying sufficient \emph{a priori} estimates that are uniform with respect to the approximating parameter $\varepsilon$. By the compactness argument and passing to the limit as $\varepsilon\rightarrow 0$ (up to a subsequence), we can find a limit pair $(\boldsymbol{\varphi},\boldsymbol{\mu})$ that gives the global weak solution to the original problem \eqref{model} on $[0,T]$. Uniqueness of $(\boldsymbol{\varphi},\boldsymbol{\mu})$ follows from the standard energy method.
For the case with $(L,\sigma)\in(0,\infty)\times(0,\infty)$, the conclusion of Proposition \ref{existence} follows from the same proof as in \cite{LvWu}, while for $(L,\sigma)\in\{0\}\times[0,\infty)$, we refer to \cite{CF15}. We just mention that in the current problem \eqref{model}, those external sources considered in \cite{CF15,LvWu} are neglected so that we are able to derive uniform-in-time estimates and extend the solution to the whole interval $[0,\infty)$ (cf. \cite{FW} for the special case with $(L,\sigma)\in \{0\}\times [0,\infty)$).
\end{proof}

\subsection{Mass conservation and energy equality}

We present some basic properties of global weak solutions to problem \eqref{model}.

\begin{proposition}[Mass conservation]
Let $\boldsymbol{\varphi}$ be the global weak solution obtained in Proposition \ref{existence}.
It holds
\begin{align}
\overline{m}(\boldsymbol{\varphi}(t))=\overline{m}_{0},\quad \forall\, t\geq 0.
\label{Mass-conser}
\end{align}
\end{proposition}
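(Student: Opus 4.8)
The plan is to test the weak formulation \eqref{defn2.1} with the constant function $\boldsymbol{1}=(1,1)$ and then integrate in time, which is the rigorous counterpart of the formal identity $\frac{\mathrm{d}}{\mathrm{d}t}\big(\int_\Omega\varphi+\int_\Gamma\psi\big)=\int_\Gamma\partial_{\mathbf{n}}\mu-\int_\Gamma\partial_{\mathbf{n}}\mu=0$ (recall $\alpha=0$ and $\int_\Gamma\Delta_\Gamma\theta=0$). First I would note that $\boldsymbol{1}$ is an admissible test function in all three cases $\mathbf{(B1)}$--$\mathbf{(B3)}$: a constant pair lies in $H^1(\Omega)\times H^1(\Gamma)$ and its two components obviously agree on $\Gamma$, hence $\boldsymbol{1}\in\mathcal{H}^1_{L,\sigma}$ no matter whether $\mathcal{H}^1_{L,\sigma}$ equals $\mathcal{H}^1$, $\mathcal{V}^1$ or $\widetilde{\mathcal{V}}^1$. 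Since $\nabla 1=0$ in $\Omega$, $\nabla_\Gamma 1=0$ on $\Gamma$ and $1-1=0$ on $\Gamma$, every term of the bilinear form vanishes, i.e.\ $a_{L,\sigma}(\boldsymbol{\mu}(t),\boldsymbol{1})=0$ for a.a.\ $t$ (this only uses $\boldsymbol{\mu}(t)\in\mathcal{H}^1_{L,\sigma}$). Therefore \eqref{defn2.1} with $\boldsymbol{y}=\boldsymbol{1}$ collapses to
\[
\big\langle\partial_t\boldsymbol{\varphi}(t),\boldsymbol{1}\big\rangle_{(\mathcal{H}^1_{L,\sigma})',\,\mathcal{H}^1_{L,\sigma}}=0\qquad\text{for a.a. }t\in(0,T).
\]

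Next I would use the regularity $\boldsymbol{\omega}=\boldsymbol{\varphi}-\overline{m}_0\boldsymbol{1}\in H^1(0,T;\mathcal{H}^{-1}_{L,\sigma,0})$ together with the embedding $H^1(0,T;X)\hookrightarrow C([0,T];X)$ and the fundamental theorem of calculus for Bochner integrals: the scalar function $t\mapsto\big\langle\boldsymbol{\omega}(t),\boldsymbol{1}\big\rangle$ is absolutely continuous on $[0,T]$ with a.e.\ derivative $\big\langle\partial_t\boldsymbol{\omega}(t),\boldsymbol{1}\big\rangle=\big\langle\partial_t\boldsymbol{\varphi}(t),\boldsymbol{1}\big\rangle=0$. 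Hence $\big\langle\boldsymbol{\varphi}(t),\boldsymbol{1}\big\rangle=\big\langle\boldsymbol{\omega}(t),\boldsymbol{1}\big\rangle+\overline{m}_0(|\Omega|+|\Gamma|)$ is constant on $[0,T]$; dividing by $|\Omega|+|\Gamma|$ and recalling that this is exactly how the generalized mean $\overline{m}$ in \eqref{gmean} acts (on $\mathcal{L}^2$, and by continuous extension on the dual spaces, via the pairing with $(1,1)$) yields $\overline{m}(\boldsymbol{\varphi}(t))=\overline{m}(\boldsymbol{\varphi}(0))$. Since $\boldsymbol{\varphi}(0)=\boldsymbol{\varphi}_0$ and $\mathbf{(A4)}$ gives $\overline{m}(\boldsymbol{\varphi}_0)=\overline{m}_0$, we obtain \eqref{Mass-conser} on $[0,T]$, and the arbitrariness of $T>0$ promotes it to all $t\geq0$.

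There is essentially no obstacle here; the only points deserving a line of care are verifying that $\boldsymbol{1}$ sits in $\mathcal{H}^1_{L,\sigma}$ uniformly over the parameter choices in \eqref{choice}, and identifying the duality pairing used when inserting $\boldsymbol{y}=\boldsymbol{1}$ with the one defining $\overline{m}$ on the relevant dual space. As a consistency check one can also argue directly: the regularity $\boldsymbol{\omega}\in L^\infty(0,T;\mathcal{V}^1_{(0)})\subset L^\infty(0,T;\mathcal{L}^2_{(0)})$ already forces $\overline{m}(\boldsymbol{\omega}(t))=0$ for a.a.\ $t$, so $\overline{m}(\boldsymbol{\varphi}(t))=\overline{m}_0$ a.e., and this extends to every $t\geq0$ by the continuity $\boldsymbol{\varphi}\in C([0,\infty);\mathcal{L}^2)$ recorded in Remark \ref{conti}.
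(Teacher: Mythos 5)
Your proposal is correct and follows exactly the paper's route: the paper's proof consists of taking $\boldsymbol{y}=\boldsymbol{1}$ in \eqref{defn2.1} (citing \cite{CF15} and \cite{LvWu} for the details you spell out), and your verification that $a_{L,\sigma}(\boldsymbol{\mu},\boldsymbol{1})=0$ in all three cases $\mathbf{(B1)}$--$\mathbf{(B3)}$ together with the absolute continuity of $t\mapsto\langle\boldsymbol{\omega}(t),\boldsymbol{1}\rangle$ is precisely the omitted computation. Your closing remark that the stated regularity $\boldsymbol{\omega}\in L^\infty(0,T;\mathcal{V}^1_{(0)})$ already encodes the conclusion is also a valid shortcut.
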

\begin{proof}
Taking $\boldsymbol{y}=\boldsymbol{1}$ in the weak formulation \eqref{defn2.1}, we easily arrive at the conclusion \eqref{Mass-conser}, see \cite[Remark 2]{CF15} for $L=0$  and \cite[Lemma 3.1]{LvWu} for $L>0$.
\end{proof}

Next, we derive an energy inequality for global weak solutions.

\begin{lemma}
Let $(\boldsymbol{\varphi}, \boldsymbol{\mu})$ be the global weak solution obtained in Proposition \ref{existence}.
It holds
\begin{align}
E\big(\boldsymbol{\varphi}(t)\big)+\int_{0}^{t}\Vert\boldsymbol{\varphi}'(s)\Vert_{\mathcal{H}_{L,\sigma,0}^{-1}}^{2}\,\mathrm{d}s\leq E\big(\boldsymbol{\varphi}_{0}\big),\quad \text{for a.a.}\ t\geq 0.\label{energyinequ}
\end{align}
Moreover, there exists a positive constant $M_{1}$ such that
\begin{align}
&\Vert\boldsymbol{\varphi}\Vert_{L^{\infty}(0,\infty;\mathcal{V}^{1})}+\int_{0}^{\infty}\Vert\boldsymbol{\varphi}'(t)\Vert_{\mathcal{H}_{L,\sigma,0}^{-1}}^{2}\,\mathrm{d}t\leq M_{1},\label{uni1}\\
&\int_{0}^{\infty}\Vert\mathbf{P}\boldsymbol{\mu}(t)\Vert_{\mathcal{H}_{L,\sigma,0}^{1}}^{2}\mathrm{d}t\leq M_{1}.\label{uni2}
\end{align}
\end{lemma}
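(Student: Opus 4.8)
The plan is to derive \eqref{energyinequ} at the approximating level and then pass to the limit, exploiting lower semicontinuity. First I would test the approximating system: multiply \eqref{appro} by $\boldsymbol{\omega}_\varepsilon'(t)$ in the $\mathcal{L}^2_{(0)}$ inner product (equivalently, pair \eqref{2.12} with $\mathbf{P}\boldsymbol{\mu}_\varepsilon$ in the appropriate dual setting). Using the chain rule for the convex functional $\Phi$ and for the primitives $\widehat{\beta}_\varepsilon$, $\widehat{\beta}_{\Gamma,\varepsilon}$, $\widehat{\pi}$, $\widehat{\pi}_\Gamma$, one obtains
\begin{align*}
\frac{\mathrm d}{\mathrm dt}E_\varepsilon\big(\boldsymbol{\varphi}_\varepsilon(t)\big)
+\varepsilon\|\boldsymbol{\omega}_\varepsilon'(t)\|_{\mathcal{L}^2}^2
+\|\boldsymbol{\omega}_\varepsilon'(t)\|_{\mathcal{H}_{L,\sigma,0}^{-1}}^2=0,
\end{align*}
where $E_\varepsilon$ is the energy with $\widehat{\beta}$, $\widehat{\beta}_\Gamma$ replaced by their Moreau-Yosida regularizations; here one uses that $\mathcal{A}_{L,\sigma}$ induces precisely the inner product $(\cdot,\cdot)_{\mathcal{H}_{L,\sigma,0}^{-1}}=(\cdot,\cdot)_{0,*}$ on $\mathcal{H}_{L,\sigma,0}^{-1}$, and that $\boldsymbol{\omega}_\varepsilon'=\boldsymbol{\varphi}_\varepsilon'$ has zero generalized mean by mass conservation, so its dual norm is well defined. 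Integrating in time and dropping the nonnegative term $\varepsilon\int_0^t\|\boldsymbol{\omega}_\varepsilon'\|_{\mathcal{L}^2}^2$ yields
\begin{align*}
E_\varepsilon\big(\boldsymbol{\varphi}_\varepsilon(t)\big)
+\int_0^t\|\boldsymbol{\varphi}_\varepsilon'(s)\|_{\mathcal{H}_{L,\sigma,0}^{-1}}^2\,\mathrm ds
\le E_\varepsilon\big(\boldsymbol{\varphi}_0\big)\le E\big(\boldsymbol{\varphi}_0\big),
\end{align*}
the last inequality because $\widehat{\beta}_\varepsilon\le\widehat{\beta}$, $\widehat{\beta}_{\Gamma,\varepsilon}\le\widehat{\beta}_\Gamma$ pointwise.

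Next I would pass to the limit $\varepsilon\to0$. From the convergences established in the proof of Proposition \ref{existence} (weak-$*$ in $L^\infty(0,T;\mathcal{V}^1)$ and strong in $C([0,T];\mathcal{L}^2)$ for $\boldsymbol{\varphi}_\varepsilon$, weak in $H^1(0,T;\mathcal{H}_{L,\sigma,0}^{-1})$ for $\boldsymbol{\omega}_\varepsilon$), together with Fatou's lemma applied to $\widehat{\beta}_\varepsilon(\varphi_\varepsilon)\to\widehat{\beta}(\varphi)$ a.e., weak lower semicontinuity of the Dirichlet energy and of the $L^2(0,t;\mathcal{H}_{L,\sigma,0}^{-1})$ norm of $\boldsymbol{\varphi}_\varepsilon'$, one recovers \eqref{energyinequ} for almost every $t\ge0$. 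For the bounds \eqref{uni1}--\eqref{uni2}: since $\overline{m}_0\in(-1,1)$ is fixed, the surface-energy contribution $\int_\Omega\widehat{\beta}(\varphi)+\int_\Gamma\widehat{\beta}_\Gamma(\psi)$ controls a suitable $L^1$ norm from below, and by the generalized Poincar\'e inequality (Lemma \ref{equivalent}) the Dirichlet part of $E(\boldsymbol{\varphi})$ together with $|\overline{m}_0|$ controls $\|\boldsymbol{\varphi}\|_{\mathcal{V}^1}$; hence \eqref{energyinequ} gives a uniform-in-time bound on $\|\boldsymbol{\varphi}(t)\|_{\mathcal{V}^1}$ and on $\int_0^\infty\|\boldsymbol{\varphi}'\|_{\mathcal{H}_{L,\sigma,0}^{-1}}^2$, which is \eqref{uni1} (here the absence of external sources, noted in Remark after Proposition \ref{existence}, is what makes the estimate uniform on $[0,\infty)$). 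For \eqref{uni2}, I would test \eqref{defn2.1} with $\boldsymbol{y}=\mathbf{P}\boldsymbol{\mu}(t)\in\mathcal{H}_{L,\sigma,0}^1$: the term $a_{L,\sigma}(\boldsymbol{\mu},\mathbf{P}\boldsymbol{\mu})=a_{L,\sigma}(\mathbf{P}\boldsymbol{\mu},\mathbf{P}\boldsymbol{\mu})=\|\mathbf{P}\boldsymbol{\mu}\|_{\mathcal{H}_{L,\sigma,0}^1}^2$ (the constant part drops out of $a_{L,\sigma}$), while $\langle\partial_t\boldsymbol{\varphi},\mathbf{P}\boldsymbol{\mu}\rangle$ is bounded by $\|\boldsymbol{\varphi}'\|_{\mathcal{H}_{L,\sigma,0}^{-1}}\|\mathbf{P}\boldsymbol{\mu}\|_{\mathcal{H}_{L,\sigma,0}^1}$; absorbing and integrating over $(0,\infty)$ with \eqref{uni1} yields \eqref{uni2}.

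The main obstacle is the rigorous justification of the chain-rule identity at the approximating level — in particular pairing \eqref{appro} with $\boldsymbol{\omega}_\varepsilon'$ in $\mathcal{H}_{L,\sigma,0}^{-1}$ and identifying $\big(\mathcal{A}_{L,\sigma}\boldsymbol{\omega}_\varepsilon',\boldsymbol{\omega}_\varepsilon'\big)$ with $\|\boldsymbol{\omega}_\varepsilon'\|_{\mathcal{H}_{L,\sigma,0}^{-1}}^2$, and the chain rule $\frac{\mathrm d}{\mathrm dt}\Phi(\boldsymbol{\omega}_\varepsilon)=\big(\partial\Phi(\boldsymbol{\omega}_\varepsilon),\boldsymbol{\omega}_\varepsilon'\big)$ — which requires enough regularity of $\boldsymbol{\omega}_\varepsilon$ (namely $\boldsymbol{\omega}_\varepsilon\in L^2(0,T;\mathcal{V}_{(0)}^2)$ with $\boldsymbol{\omega}_\varepsilon'\in L^2(0,T;\mathcal{L}^2)$, which holds by the solvability statement for the approximating problem). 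This is standard once the regularity is in hand — one can invoke the abstract chain rule for subdifferentials along absolutely continuous curves (cf. \cite{CV}) — but it must be stated carefully since the dual norm $\|\cdot\|_{\mathcal{H}_{L,\sigma,0}^{-1}}$ changes definition across the cases $L>0$, $L=0$ with $\sigma>0$, and $L=\sigma=0$; in each case, however, the needed identity $\big(\mathcal{A}_{L,\sigma}\boldsymbol{z},\boldsymbol{z}\big)=\|\boldsymbol{z}\|_{\mathcal{H}_{L,\sigma,0}^{-1}}^2$ for $\boldsymbol{z}\in\mathcal{L}^2_{(0)}$ is exactly the defining property of the solution operators $\mathfrak{S}^{L,\sigma}$ and $\mathbf{A}_0^{-1}$ recalled in $\mathbf{(B1)}$--$\mathbf{(B3)}$, so the argument is uniform in form. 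Alternatively, one avoids differentiating altogether by passing to the limit directly in the approximating energy identity written in integrated form, which is the route I would take in the write-up to keep the presentation uniform across \eqref{choice}.
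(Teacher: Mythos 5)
Your proposal is correct and follows essentially the same route as the paper: test the approximating equation \eqref{appro} by $\boldsymbol{\omega}_\varepsilon'$, use the chain rule for subdifferentials to obtain the $\varepsilon$-level energy identity, integrate, pass to the limit by weak lower semicontinuity and Fatou, and then combine the lower bound on the potentials with the generalized Poincar\'e inequality for \eqref{uni1} and the identification of $\mathbf{P}\boldsymbol{\mu}$ with $-\mathcal{A}_{L,\sigma}\boldsymbol{\varphi}'$ for \eqref{uni2}. The only (harmless) deviations are that you bound $E_\varepsilon(\boldsymbol{\varphi}_0)\le E(\boldsymbol{\varphi}_0)$ by monotonicity of the Moreau--Yosida regularization where the paper takes the limit $E_\varepsilon(\boldsymbol{\varphi}_0)\to E(\boldsymbol{\varphi}_0)$, and you obtain \eqref{uni2} by testing with $\mathbf{P}\boldsymbol{\mu}$ and absorbing rather than quoting the norm identity directly.
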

\begin{proof}
We apply the argument in \cite{FW} for the case $(L,\sigma)\in\{0\}\times [0,\infty)$. Testing \eqref{appro} by $\boldsymbol{\omega}'_{\varepsilon}\in L^{2}(0,\infty;\mathcal{L}_{(0)}^{2})$, using the chain rule of subdifferential (see, e.g., \cite{R.E.S}), we find that $t\mapsto E_{\varepsilon}\big(\boldsymbol{\varphi}_{\varepsilon}(t)\big)$ is absolutely continuous on $[0,\infty)$ and
\begin{align}
\frac{\mathrm{d}}{\mathrm{d}t}E_{\varepsilon}\big(\boldsymbol{\varphi}_{\varepsilon}(t)\big)
+ \varepsilon\Vert\boldsymbol{\omega}'_{\varepsilon}(t)\Vert_{\mathcal{L}^{2}}^{2} +\Vert\boldsymbol{\omega}'_{\varepsilon}(t)\Vert_{\mathcal{H}_{L,\sigma,0}^{-1}}^{2} =0, \quad\text{for a.a.} \ t>0,
\label{L1}
\end{align}
where
\begin{align}
E_{\varepsilon}\big(\boldsymbol{y}\big)&=\frac{1}{2}\int_{\Omega}|\nabla y|^{2}\,\mathrm{d}x +\frac{1}{2}\int_{\Gamma}|\nabla_{\Gamma}y_{\Gamma}|^{2}\,\mathrm{d}S +\int_{\Omega}\widehat{\beta}_{\varepsilon}(y)+\widehat{\pi}(y)\,\mathrm{d}x \notag\\
&\quad+\int_{\Gamma}\widehat{\beta}_{\Gamma,\varepsilon}(y_{\Gamma}) +\widehat{\pi}_{\Gamma}(y_{\Gamma})\,\mathrm{d}S,
\quad\forall\,\boldsymbol{y}\in \mathcal{V}^{1}.
\notag
\end{align}
Integrating \eqref{L1} with respective to time, we obtain
\begin{align}
E_{\varepsilon}\big(\boldsymbol{\varphi}_{\varepsilon}(t)\big) +
\varepsilon\int_{0}^{t}\Vert\boldsymbol{\omega}'_{\varepsilon}(s)\Vert_{\mathcal{L}^{2}}^{2}\,\mathrm{d}s +\int_{0}^{t}\Vert\boldsymbol{\omega}'_{\varepsilon}(s)\Vert_{\mathcal{H}_{L,\sigma,0}^{-1}}^{2}\,\mathrm{d}s  =E_{\varepsilon}\big(\boldsymbol{\varphi}_{0}\big),\quad \forall\, t\geq 0.\label{eq4.1}
\end{align}
Recall the weak and strong convergence results (in the sense of subsequence)
\begin{align*}
\boldsymbol{\omega}_{\varepsilon}\rightarrow\boldsymbol{\omega}&\quad\text{weakly star in }L^{\infty}(0,T;\mathcal{V}_{(0)}^{1}),\\
\boldsymbol{\omega}_{\varepsilon}\rightarrow\boldsymbol{\omega}&\quad\text{weakly in }H^{1}(0,T;\mathcal{H}_{L,\sigma,0}^{-1})\cap L^{2}(0,T;\mathcal{V}^{2}),\\
\boldsymbol{\omega}_{\varepsilon}\rightarrow\boldsymbol{\omega}&\quad\text{strongly in }C([0,T];\mathcal{L}_{(0)}^{2})\cap L^{2}(0,T;\mathcal{V}_{(0)}^{1}),\\
\varepsilon\boldsymbol{\omega}_{\varepsilon}\rightarrow\boldsymbol{0}&\quad\text{strongly in }H^{1}(0,T;\mathcal{L}^{2}),
\end{align*}
for any $T>0$ (see \cite{CF15,LvWu,FW}).
By the lower weak semicontinuity of norms, we get
\begin{align*}
\liminf_{\varepsilon\rightarrow0}\,\int_{0}^{t}\Vert\boldsymbol{\omega}'_{\varepsilon}(s)\Vert_{\mathcal{H}_{L,\sigma,0}^{-1}}^{2}\,\mathrm{d}s \geq\int_{0}^{t}\Vert\boldsymbol{\omega}'(s)\Vert_{\mathcal{H}_{L,\sigma,0}^{-1}}^{2}\,\mathrm{d}s, \quad\forall\,t>0.
\end{align*}
Besides, it is straightforward to check that
\begin{align*}
&\liminf_{\varepsilon\rightarrow0}E_{\varepsilon}\big(\boldsymbol{\varphi}_{\varepsilon}(t)\big) \geq  E\big(\boldsymbol{\varphi}(t)\big)\quad\text{for a.a. }t>0,
\quad\text{and}\quad
 \lim_{\varepsilon\rightarrow0} E_{\varepsilon}\big(\boldsymbol{\varphi}_0\big)= E\big(\boldsymbol{\varphi}_0\big).
\end{align*}
Thus, taking lim inf as $\varepsilon\rightarrow0$ in \eqref{eq4.1}, we obtain the energy
inequality \eqref{energyinequ}.

Thanks to $\mathbf{(A1)}$, $\mathbf{(A3)}$, we can find a nonnegative constant $c_{1}$ such that
\begin{align}
\widehat{\beta}(r)+\widehat{\pi}(r)\geq-c_{1}, \quad\widehat{\beta}_{\Gamma}(r)+\widehat{\pi}_{\Gamma}(r)\geq-c_{1}, \quad\forall\,r\in[-1,1].\notag
\end{align}
Hence, by the definition of $E(\boldsymbol{\varphi})$, we get
\begin{align}
E\big(\boldsymbol{\varphi}(t)\big)\geq\frac{1}{2}\int_{\Omega}|\nabla\varphi(t)|^{2}\,\mathrm{d}x+\frac{1}{2}\int_{\Gamma}|\nabla_{\Gamma}\psi(t)|^{2}\,\mathrm{d}S-c_{1}\big(|\Omega|+|\Gamma|\big),\quad\text{for a.a. }t\geq0.
\label{e-lower}
\end{align}
Combining it with \eqref{energyinequ}, recalling the generalized Poincar\'e's inequality in Lemma \ref{equivalent}, we see that
\begin{align*}
\int_{0}^{t}\Vert\boldsymbol{\varphi}'(s)\Vert_{\mathcal{H}_{L,\sigma,0}^{-1}}^{2}\,\mathrm{d}s\leq E\big(\boldsymbol{\varphi}_{0}\big)+c_{1}\big(|\Omega|+|\Gamma|\big),
\end{align*}
and
\begin{align*}
\Vert\boldsymbol{\varphi}(t)\Vert_{\mathcal{V}^{1}}^{2}&\leq2(c_{P}^{2}+1)\Vert\boldsymbol{\varphi}(t)-\overline{m}_{0}\boldsymbol{1}\Vert_{\mathcal{V}_{(0)}^{1}}^{2}+2\big(|\Omega|+|\Gamma|\big)|\overline{m}_{0}|^{2}\\
&\leq4(c_{P}^{2}+1)E(\boldsymbol{\varphi}_{0})+2(c_{P}^{2}+1)c_{1}\big(|\Omega|+|\Gamma|\big)+2\big(|\Omega|+|\Gamma|\big)|\overline{m}_{0}|^{2}
\end{align*}
for almost all $t\geq0$. The above estimates yield \eqref{uni1}, and for the case $L=0$, $\sigma\in [0,\infty)$, we refer to \cite{FW}. Finally, by the definition of the operator $\mathcal{A}_{L,\sigma}$ and \eqref{defn2.1}, we have
\begin{align}
-\mathcal{A}_{L,\sigma}\big(\boldsymbol{\varphi}'(t)\big)=\mathbf{P}\boldsymbol{\mu}(t)\quad\mathrm{in\;}\mathcal{H}_{L,\sigma,0}^{1},\quad\text{for a.a. }t\geq0, \label{equi}
\end{align}
which together with \eqref{uni1} allows us to conclude \eqref{uni2}.
\end{proof}

The following lemma implies that the time derivative of weak solution regularizes for positive time.

\begin{lemma}
\label{time-regu}
Let $\boldsymbol{\varphi}$ be the global weak solution obtained in Proposition \ref{existence}.
For any given $\eta>0$, there exists a positive constant $M_{2}$ such that
\begin{align}
\Vert\boldsymbol{\varphi}'\Vert_{L^{\infty}(\eta,\infty;\mathcal{H}_{L,\sigma,0}^{-1})}+\int_{t}^{t+1}\Vert\boldsymbol{\varphi}'(s)\Vert_{\mathcal{V}_{(0)}^{1}}^{2}\,\mathrm{d}s\leq M_{2},\quad\forall\,t\geq\eta.
\label{4.6}
\end{align}
\end{lemma}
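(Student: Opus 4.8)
The plan is to establish \eqref{4.6} first for the approximating problem \eqref{appro}--\eqref{approini}, with all bounds uniform in $\varepsilon\in(0,1)$, and then pass to the limit $\varepsilon\to0$ as in the proof of Proposition \ref{existence}. Set $\bm{z}_{\varepsilon}:=\bm{\omega}_{\varepsilon}'=\bm{\varphi}_{\varepsilon}'$. Since $\partial\Phi$ is linear and $\bm{\beta}_{\varepsilon}$, $\bm{\pi}$ are $C^{1}$, differentiating \eqref{appro} in time gives
\begin{align*}
\varepsilon\bm{z}_{\varepsilon}'+\mathcal{A}_{L,\sigma}(\bm{z}_{\varepsilon}')+\partial\Phi(\bm{z}_{\varepsilon})=-\mathbf{P}\big(\bm{\beta}_{\varepsilon}'(\bm{\varphi}_{\varepsilon})+\bm{\pi}'(\bm{\varphi}_{\varepsilon})\big)\bm{z}_{\varepsilon}\quad\text{in }\mathcal{L}_{(0)}^{2},\ \text{for a.a. }t>0.
\end{align*}
Rigorously this step is performed through the incremental ratios $\bm{w}^{h}:=h^{-1}\big(\bm{\omega}_{\varepsilon}(\cdot+h)-\bm{\omega}_{\varepsilon}(\cdot)\big)$, which inherit the regularity of $\bm{\omega}_{\varepsilon}$, followed by $h\to0$; writing the increments of $\bm{\beta}_{\varepsilon}$ in integral form, the coefficient of $\bm{w}^{h}$ there is pointwise nonnegative by monotonicity of $\beta_{\varepsilon}$, $\beta_{\Gamma,\varepsilon}$, and the $\bm{\pi}$-increment is $O(|\bm{w}^{h}|)$ by $\mathbf{(A3)}$.

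Testing the above identity with $\bm{z}_{\varepsilon}(t)\in\mathcal{L}_{(0)}^{2}$, using $\overline{m}(\bm{z}_{\varepsilon})=0$ so that $\mathbf{P}$ drops out, and using the relations between $\mathcal{A}_{L,\sigma}$, $(\cdot,\cdot)_{\mathcal{L}^{2}}$, $(\cdot,\cdot)_{0,*}$ and $(\cdot,\cdot)_{\mathcal{V}_{(0)}^{1}}$ recalled in Section 2 (in particular $(\mathcal{A}_{L,\sigma}\bm{z}_{\varepsilon}',\bm{z}_{\varepsilon})_{\mathcal{L}^{2}}=\tfrac12\tfrac{\mathrm{d}}{\mathrm{d}t}\|\bm{z}_{\varepsilon}\|_{\mathcal{H}_{L,\sigma,0}^{-1}}^{2}$ and $(\partial\Phi(\bm{z}_{\varepsilon}),\bm{z}_{\varepsilon})_{\mathcal{L}^{2}}=\|\bm{z}_{\varepsilon}\|_{\mathcal{V}_{(0)}^{1}}^{2}$), we obtain
\begin{align*}
\frac{1}{2}\frac{\mathrm{d}}{\mathrm{d}t}\Big(\varepsilon\|\bm{z}_{\varepsilon}\|_{\mathcal{L}^{2}}^{2}+\|\bm{z}_{\varepsilon}\|_{\mathcal{H}_{L,\sigma,0}^{-1}}^{2}\Big)+\|\bm{z}_{\varepsilon}\|_{\mathcal{V}_{(0)}^{1}}^{2}
&\leq-\big(\bm{\beta}_{\varepsilon}'(\bm{\varphi}_{\varepsilon})\bm{z}_{\varepsilon},\bm{z}_{\varepsilon}\big)_{\mathcal{L}^{2}}+(K+K_{\Gamma})\|\bm{z}_{\varepsilon}\|_{\mathcal{L}^{2}}^{2}
\\
&\leq(K+K_{\Gamma})\|\bm{z}_{\varepsilon}\|_{\mathcal{L}^{2}}^{2},
\end{align*}
where the last step uses again the favorable sign. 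The remaining term is controlled by the interpolation estimate $\|\bm{z}_{\varepsilon}\|_{\mathcal{L}^{2}}^{2}\leq\|\bm{z}_{\varepsilon}\|_{\mathcal{H}_{L,\sigma,0}^{-1}}\|\bm{z}_{\varepsilon}\|_{\mathcal{V}_{(0)}^{1}}$, a consequence of the identity $\|\bm{z}\|_{\mathcal{L}^{2}}^{2}=a_{L,\sigma}(\bm{z},\mathcal{A}_{L,\sigma}\bm{z})$ for $\bm{z}\in\mathcal{V}_{(0)}^{1}$ together with the Cauchy--Schwarz inequality for the form $a_{L,\sigma}$, followed by Young's inequality. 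This yields, with $C$ independent of $\varepsilon$,
\begin{align*}
\frac{\mathrm{d}}{\mathrm{d}t}y_{\varepsilon}(t)+\|\bm{z}_{\varepsilon}(t)\|_{\mathcal{V}_{(0)}^{1}}^{2}\leq Cy_{\varepsilon}(t),\qquad y_{\varepsilon}(t):=\varepsilon\|\bm{z}_{\varepsilon}(t)\|_{\mathcal{L}^{2}}^{2}+\|\bm{z}_{\varepsilon}(t)\|_{\mathcal{H}_{L,\sigma,0}^{-1}}^{2}.
\end{align*}

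From \eqref{eq4.1} and the lower bound for $E_{\varepsilon}$ used to obtain \eqref{e-lower} we already have $\int_{0}^{\infty}y_{\varepsilon}(t)\,\mathrm{d}t\leq C$ uniformly in $\varepsilon$. Applying the uniform Gronwall lemma on intervals of length $\eta$ then gives $y_{\varepsilon}(t)\leq(C/\eta)e^{C\eta}$ for all $t\geq\eta$, and integrating the differential inequality over $[t,t+1]$ gives $\int_{t}^{t+1}\|\bm{z}_{\varepsilon}(s)\|_{\mathcal{V}_{(0)}^{1}}^{2}\,\mathrm{d}s\leq C(\eta)$ for all $t\geq\eta$, both uniformly in $\varepsilon$. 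Finally, using the weak/weak-$*$ convergences $\bm{\omega}_{\varepsilon}'\to\bm{\varphi}'$ in $L^{\infty}(\eta,\infty;\mathcal{H}_{L,\sigma,0}^{-1})$ and in $L^{2}(\eta,T;\mathcal{V}_{(0)}^{1})$ for every $T>\eta$, the vanishing of $\varepsilon\|\bm{\omega}_{\varepsilon}'\|_{\mathcal{L}^{2}}^{2}$, and the lower semicontinuity of norms, the two bounds pass to the limit and give \eqref{4.6} with $M_{2}=M_{2}(\eta)$. The main technical point is the rigorous justification of the time differentiation (the difference-quotient argument, keeping track of the favorable sign of the $\bm{\beta}_{\varepsilon}'$ contribution) carried out uniformly in $\varepsilon$ and, simultaneously, uniformly over the three cases $\mathbf{(B1)}$--$\mathbf{(B3)}$; in particular one must check that the interpolation inequality and the identity $\|\bm{z}\|_{\mathcal{L}^{2}}^{2}=a_{L,\sigma}(\bm{z},\mathcal{A}_{L,\sigma}\bm{z})$ remain valid in the degenerate case $L=\sigma=0$, where $\mathcal{A}_{0,0}=\mathbf{A}_{0}^{-1}$ acts on $\widetilde{\mathcal{V}}_{(0)}^{-1}$.
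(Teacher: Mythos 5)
Your proof is correct and follows essentially the same route as the paper: a difference-quotient argument on the approximating problem \eqref{appro}, the sign of the monotone $\bm{\beta}_{\varepsilon}$-contribution, the Lipschitz bound on $\bm{\pi}$, absorption of the $\mathcal{L}^{2}$-norm, the uniform Gronwall lemma fed by the dissipation integral from \eqref{eq4.1}, and passage to the limit $\varepsilon\to 0$. The only (immaterial) difference is that you absorb $\Vert\bm{z}_{\varepsilon}\Vert_{\mathcal{L}^{2}}^{2}$ via the interpolation inequality $\Vert\bm{z}\Vert_{\mathcal{L}^{2}}^{2}\leq C\Vert\bm{z}\Vert_{\mathcal{H}_{L,\sigma,0}^{-1}}\Vert\bm{z}\Vert_{\mathcal{V}_{(0)}^{1}}$ (which, as stated, needs a constant depending on $\sigma$), whereas the paper invokes Ehrling's lemma (Lemma \ref{Ehrling}); both lead to the same differential inequality \eqref{diff1}.
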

\begin{proof}
Consider the difference between \eqref{appro} at time $s$ and $s+h$ ($h>0$), that is
\begin{align}
&\varepsilon\big(\boldsymbol{\omega}_{\varepsilon}'(s+h)-\boldsymbol{\omega}_{\varepsilon}'(s)\big) +\mathcal{A}_{L,\sigma}\big(\boldsymbol{\omega}_{\varepsilon}'(s+h)-\boldsymbol{\omega}_{\varepsilon}'(s)\big) +\partial\Phi\big(\boldsymbol{\omega}_{\varepsilon}(s+h)-\boldsymbol{\omega}_{\varepsilon}(s)\big)\notag\\
&\quad=\mathbf{P}\left(-\boldsymbol{\beta}_{\varepsilon}\big(\boldsymbol{\varphi}_{\varepsilon}(s+h)\big) +\boldsymbol{\beta}_{\varepsilon}\big(\boldsymbol{\varphi}_{\varepsilon}(s)\big)\right) +\mathbf{P}\left(-\boldsymbol{\pi}\big(\boldsymbol{\varphi}_{\varepsilon}(s+h)\big) +\boldsymbol{\pi}\big(\boldsymbol{\varphi}_{\varepsilon}(s)\big)\right) \quad\text{in }\mathcal{L}_{(0)}^{2}
\label{3.2-1}
\end{align}
for almost all $s\geq 0$. Multiplying \eqref{3.2-1} by $\boldsymbol{\omega}_{\varepsilon}(s+h)-\boldsymbol{\omega}_{\varepsilon}(s)$, dividing the resultant by $h^{2}$, according to the monotonicity of $\beta_{\varepsilon}$, $\beta_{\Gamma,\varepsilon}$, the Lipschitz continuity of $\pi$, $\pi_{\Gamma}$ and the Ehrling's lemma (see Lemma \ref{Ehrling}), we have
\begin{align}
&\frac{\varepsilon}{2}\frac{\mathrm{d}}{\mathrm{d}s}\Big\Vert\frac{\boldsymbol{\omega}_{\varepsilon}(s+h)-\boldsymbol{\omega}_{\varepsilon}(s)}{h}\Big\Vert_{\mathcal{L}^{2}}^{2}+\frac{1}{2}\frac{\mathrm{d}}{\mathrm{d}s}\Big\Vert\frac{\boldsymbol{\omega}_{\varepsilon}(s+h)-\boldsymbol{\omega}_{\varepsilon}(s)}{h}\Big\Vert_{\mathcal{H}_{L,\sigma,0}^{-1}}^{2}+\Big\Vert\frac{\boldsymbol{\omega}_{\varepsilon}(s+h)-\boldsymbol{\omega}_{\varepsilon}(s)}{h}\Big\Vert_{\mathcal{V}_{(0)}^{1}}^{2}\notag\\
&\quad\leq(K+K_{\Gamma})\Big\Vert\frac{\boldsymbol{\omega}_{\varepsilon}(s+h)-\boldsymbol{\omega}_{\varepsilon}(s)}{h}\Big\Vert_{\mathcal{L}^{2}}^{2}\notag\\
&\quad\leq\frac{1}{2}\Big\Vert\frac{\boldsymbol{\omega}_{\varepsilon}(s+h) -\boldsymbol{\omega}_{\varepsilon}(s)}{h}\Big\Vert_{\mathcal{V}_{(0)}^{1}}^{2} +C\Big\Vert\frac{\boldsymbol{\omega}_{\varepsilon}(s+h) -\boldsymbol{\omega}_{\varepsilon}(s)}{h}\Big\Vert_{\mathcal{H}_{L,\sigma,0}^{-1}}^{2},
\label{diff1}
\end{align}
where the positive constant $C$ depends on $K$, $K_{\Gamma}$, $L$ and $\sigma$.
Since the specific form of $\mathcal{A}_{L,\sigma}$ does not play an essential role here, we can apply the same argument as in \cite[Lemma 3.3]{FW} to conclude \eqref{4.6}, the details are omitted.
\end{proof}

Now we are able to improve the energy inequality \eqref{energyinequ} to be an energy equality.

\begin{proposition}[Energy equality]
\label{energyequality}
 Let $\boldsymbol{\varphi}$ be the global weak solution obtained in Proposition \ref{existence}. We have
\begin{align}
\frac{\mathrm{d}}{\mathrm{d}t}E\big(\boldsymbol{\varphi}(t)\big)+\Vert\boldsymbol{\varphi}'(t) \Vert_{\mathcal{H}_{L,\sigma,0}^{-1}}^{2}=0,\quad\text{for a.a. }t> 0, \label{4.11}
\end{align}
and
\begin{align}
E\big(\boldsymbol{\varphi}(t)\big)+\int_{0}^{t}\Vert\boldsymbol{\varphi}'(s) \Vert_{\mathcal{H}_{L,\sigma,0}^{-1}}^{2}\,\mathrm{d}s=E\big(\boldsymbol{\varphi}_{0}\big), \quad\forall\,t\geq 0.\label{Energyequ}
\end{align}
\end{proposition}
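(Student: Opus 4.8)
The plan is to upgrade the energy inequality \eqref{energyinequ} to an equality by exploiting the additional regularity established in Lemma \ref{time-regu}, which makes the formal computation leading to \eqref{formalener} rigorous for positive times. First I would fix an arbitrary $\eta>0$. By Theorem \ref{eventual} (or directly by Lemma \ref{time-regu} together with the weak formulation \eqref{defn2.1} and equation \eqref{equi}), on $[\eta,\infty)$ the solution enjoys $\bm{\varphi}\in L^\infty(\eta,\infty;\mathcal{V}^2)$, $\partial_t\bm\varphi\in L^\infty(\eta,\infty;\mathcal{H}_{L,\sigma,0}^{-1})\cap L^2_{\mathrm{uloc}}(\eta,\infty;\mathcal{V}_{(0)}^1)$ and $\bm\mu\in L^\infty(\eta,\infty;\mathcal{H}_{L,\sigma}^1)$, so that each term in \eqref{model} makes sense in the appropriate space. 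In particular $\beta(\varphi), \beta_\Gamma(\psi)$ are in $L^2$ locally uniformly in time, and $\bm\varphi'(t)\in \mathcal{H}_{L,\sigma,0}^{-1}$ for a.a. $t$, so the pairing $\langle \bm\varphi'(t), \bm\mu(t)\rangle$ is well-defined.

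The core step is to test the weak formulation \eqref{defn2.1} with $\bm y = \bm\mu(t)-\overline{m}(\bm\mu(t))\bm 1\in\mathcal{H}_{L,\sigma,0}^1$ (admissible since $a_{L,\sigma}$ kills constants in the relevant components, or equivalently use \eqref{equi} to write $\bm\varphi'=-\mathcal{A}_{L,\sigma}(\bm\varphi')$ paired against $-\mathbf{P}\bm\mu$). This yields
\begin{align*}
\langle \partial_t\bm\varphi(t),\bm\mu(t)\rangle_{\mathcal{H}_{L,\sigma}^{-1},\mathcal{H}_{L,\sigma}^1}
= -a_{L,\sigma}(\bm\mu(t),\mathbf{P}\bm\mu(t)) = -\|\bm\varphi'(t)\|_{\mathcal{H}_{L,\sigma,0}^{-1}}^2,
\end{align*}
where the last identity uses \eqref{equi} and the definition of the $\mathcal{H}_{L,\sigma,0}^{-1}$-inner product. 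On the other hand, since $\mu=-\Delta\varphi+\beta(\varphi)+\pi(\varphi)$ a.e. in $Q_T$ and $\theta=\partial_{\mathbf n}\varphi-\Delta_\Gamma\psi+\beta_\Gamma(\psi)+\pi_\Gamma(\psi)$ a.e. on $\Sigma_T$, and $\partial_t\varphi=\partial_t\psi$ on $\Gamma$ by the trace compatibility $\varphi|_\Gamma=\psi$, one integrates by parts (the boundary terms from $-\Delta\varphi$ combining with the $\partial_{\mathbf n}\varphi$ contribution on $\Gamma$, using the regularity $\bm\varphi\in\mathcal{V}^2$) to identify the left-hand side with $\frac{\mathrm d}{\mathrm dt}E(\bm\varphi(t))$. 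Here I would use the chain rule for $t\mapsto \widehat\beta(\varphi(t))$ and $t\mapsto\widehat\beta_\Gamma(\psi(t))$, which is justified because on $[\eta,\infty)$ the strict separation is not yet available, but the absolute continuity of these maps follows from $\beta(\varphi),\beta_\Gamma(\psi)\in L^2_{\mathrm{loc}}$, $\partial_t\bm\varphi\in L^2_{\mathrm{loc}}(\mathcal{V}_{(0)}^1)$, and the monotone-operator chain-rule lemma (as in the $\varepsilon$-level identity \eqref{L1}); similarly the smooth parts $\widehat\pi,\widehat\pi_\Gamma$ and the Dirichlet parts are handled by standard chain rules in $\mathcal{V}^1$. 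This gives \eqref{4.11} on $(\eta,\infty)$, and since $\eta>0$ is arbitrary, on all of $(0,\infty)$.

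Finally, to obtain \eqref{Energyequ} on the closed half-line including $t=0$, I would integrate \eqref{4.11} over $[\eta,t]$ to get $E(\bm\varphi(t))+\int_\eta^t\|\bm\varphi'(s)\|_{\mathcal{H}_{L,\sigma,0}^{-1}}^2\,\mathrm ds=E(\bm\varphi(\eta))$, then let $\eta\to 0^+$: the right-hand side converges to $E(\bm\varphi_0)$ using $\bm\varphi\in C_w([0,\infty);\mathcal{V}^1)$ (Remark \ref{conti}) together with the already-established energy inequality \eqref{energyinequ} to pin down $\limsup_{\eta\to0}E(\bm\varphi(\eta))\le E(\bm\varphi_0)$ and weak lower semicontinuity for the reverse; the integral term converges by monotone/dominated convergence since $\bm\varphi'\in L^2(0,T;\mathcal{H}_{L,\sigma,0}^{-1})$. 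The main obstacle I anticipate is the rigorous justification of the chain rule $\frac{\mathrm d}{\mathrm dt}\int_\Omega\widehat\beta(\varphi)\,\mathrm dx=\int_\Omega\beta(\varphi)\partial_t\varphi\,\mathrm dx$ (and its boundary analogue) without the strict separation property, i.e.\ when $\beta(\varphi)$ is merely $L^2$ in space-time rather than bounded; this is resolved by appealing to the subdifferential chain-rule theorem for the convex functional $\int_\Omega\widehat\beta(\cdot)\,\mathrm dx + \int_\Gamma\widehat\beta_\Gamma(\cdot)\,\mathrm dS$ on $\mathcal{L}^2$ (as already used at the approximate level for \eqref{L1}), combined with the fact that $(\beta(\varphi),\beta_\Gamma(\psi))\in L^2_{\mathrm{loc}}(\eta,\infty;\mathcal{L}^2)$ lies in the subdifferential by \eqref{eq3.5}--\eqref{eq3.6}. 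A secondary technical point is ensuring the boundary integration-by-parts identity $\int_\Omega(-\Delta\varphi)\partial_t\varphi + \int_\Gamma(\partial_{\mathbf n}\varphi-\Delta_\Gamma\psi)\partial_t\psi = \frac{\mathrm d}{\mathrm dt}\big(\tfrac12\|\nabla\varphi\|^2+\tfrac12\|\nabla_\Gamma\psi\|^2\big)$ holds in the sense of distributions on $(\eta,\infty)$, which follows from $\bm\varphi\in L^\infty(\eta,\infty;\mathcal{V}^2)\cap H^1_{\mathrm{loc}}(\eta,\infty;\mathcal{V}_{(0)}^1)$ and a standard density/mollification argument.
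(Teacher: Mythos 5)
Your proposal is correct and follows essentially the same route as the paper: use the regularity gained for $t\geq\eta$ (Lemma \ref{time-regu}) together with the identity $(\boldsymbol{\varphi}',\boldsymbol{\mu})_{\mathcal{L}^{2}}=-\Vert\boldsymbol{\varphi}'\Vert_{\mathcal{H}_{L,\sigma,0}^{-1}}^{2}$ from \eqref{equi} and the chain rule for subdifferentials to obtain the differential identity on $[\eta,\infty)$, then let $\eta\to 0^{+}$, pinning down $\lim_{s\to 0}E(\boldsymbol{\varphi}(s))=E(\boldsymbol{\varphi}_{0})$ by combining the energy inequality \eqref{energyinequ} with weak continuity and lower semicontinuity. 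The technical points you flag (the subdifferential chain rule for the singular part and the passage to $t=0$) are exactly the ones the paper handles, and in the same way.
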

\begin{proof}
For any given $\eta>0$, we infer from \eqref{equi}, Lemma \ref{time-regu}, the regularity properties obtained in Proposition \ref{existence} and the chain rule of subdifferentials (see, e.g., \cite{R.E.S}) that
\begin{align}
-\int_{s}^{t}\Vert\boldsymbol{\varphi}'(\tau)\Vert_{\mathcal{H}_{L,\sigma,0}^{-1}}^{2}\,\mathrm{d}\tau
&=\int_{s}^{t}\big(\boldsymbol{\varphi}'(\tau),\boldsymbol{\mu}(\tau)\big)_{\mathcal{L}^{2}}\,\mathrm{d}\tau\notag\\
&=\int_{s}^{t}\big(\boldsymbol{\varphi}'(\tau),\partial\Phi\big(\boldsymbol{\varphi}(\tau)\big)\big)_{\mathcal{L}^{2}}\,\mathrm{d}\tau+\int_{s}^{t}\big(\boldsymbol{\varphi}'(\tau),\boldsymbol{\beta}\big(\boldsymbol{\varphi}(\tau)\big)+\boldsymbol{\pi}\big(\boldsymbol{\varphi}(\tau)\big)\big)_{\mathcal{L}^{2}}\,\mathrm{d}\tau\notag\\
&=\int_{s}^{t}\frac{\mathrm{d}}{\mathrm{d}\tau}E\big(\boldsymbol{\varphi}(\tau)\big)\,\mathrm{d}\tau\notag\\
&=E\big(\boldsymbol{\varphi}(t)\big)-E\big(\boldsymbol{\varphi}(s)\big),\quad\forall\,s,t\geq\eta.
\label{Energyequb}
\end{align}
Hence, the mapping $t\mapsto E\big(\boldsymbol{\varphi}(t)\big)$ is absolutely continuous and non-increasing for all $t\geq \eta$.
Since $\eta>0$ is arbitrary, the energy identity \eqref{4.11} holds for almost all $t>0$.
It follows from \eqref{energyinequ} that $\limsup_{s\to 0} E\big(\boldsymbol{\varphi}(s)\big)\leq E\big(\boldsymbol{\varphi}(0)\big)$. On the other hand, from Remark \ref{conti}, lower weak semicontinuity of norms and Lebesgue's dominated convergence theorem, we have $\liminf_{s\to 0} E\big(\boldsymbol{\varphi}(s)\big)\geq E\big(\boldsymbol{\varphi}(0)\big)$.
As a result, it holds $\lim_{s\to 0}E\big(\boldsymbol{\varphi}(s)\big)=E\big(\boldsymbol{\varphi}(0)\big)$. This together with \eqref{uni1} enables us to pass to the limit as $s\to 0$ in  \eqref{Energyequb} and conclude the energy equality \eqref{Energyequ}.
\end{proof}
\begin{remark}\rm
Proposition \ref{energyequality} implies that the mapping $t\mapsto E\big(\boldsymbol{\varphi}(t)\big)$ is absolutely continuous for all $t\geq0$. Then applying the same argument as in \cite[Proposition 4.1]{FW}, we find $\boldsymbol{\varphi}\in C([0,\infty);\mathcal{V}^{1})$.
\end{remark}
\begin{corollary}\label{decay1}
Let $\boldsymbol{\varphi}$ be the global weak solution obtained in Proposition \ref{existence}. There exists some constant $E_{\infty}\in \mathbb{R}$ such that $$\lim_{t\rightarrow\infty}E(\boldsymbol{\varphi}(t))=E_{\infty}\quad \text{and} \quad
\lim_{t\to\infty} \int_{t}^{t+1}\Vert\boldsymbol{\varphi}'(s)\Vert_{\mathcal{H}_{L,\sigma,0}^{-1}}^{2}\,\mathrm{d}s=0.
$$
\end{corollary}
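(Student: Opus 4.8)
The plan is to read off both statements directly from the energy equality \eqref{Energyequ} in Proposition \ref{energyequality}, so the proof will be short. First I would note that \eqref{Energyequ} exhibits $t\mapsto E(\boldsymbol{\varphi}(t))$ as $E(\boldsymbol{\varphi}_{0})$ minus the non-negative and non-decreasing quantity $t\mapsto\int_{0}^{t}\Vert\boldsymbol{\varphi}'(s)\Vert_{\mathcal{H}_{L,\sigma,0}^{-1}}^{2}\,\mathrm{d}s$; hence $t\mapsto E(\boldsymbol{\varphi}(t))$ is non-increasing on $[0,\infty)$ and, since the right-hand side of \eqref{Energyequ} is absolutely continuous, so is $t\mapsto E(\boldsymbol{\varphi}(t))$. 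Next I would invoke the lower bound \eqref{e-lower}, which together with the non-negativity of the two Dirichlet integrals gives $E(\boldsymbol{\varphi}(t))\geq -c_{1}(|\Omega|+|\Gamma|)$ for a.a.\ $t\geq 0$, hence for every $t\geq 0$ by continuity. A non-increasing function that is bounded from below converges, so there exists a constant $E_{\infty}\in\mathbb{R}$, with $E_{\infty}\geq -c_{1}(|\Omega|+|\Gamma|)$, such that $E(\boldsymbol{\varphi}(t))\to E_{\infty}$ as $t\to\infty$.

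For the second assertion I would pass to the limit $t\to\infty$ in \eqref{Energyequ} to obtain
\begin{align*}
\int_{0}^{\infty}\Vert\boldsymbol{\varphi}'(s)\Vert_{\mathcal{H}_{L,\sigma,0}^{-1}}^{2}\,\mathrm{d}s=E(\boldsymbol{\varphi}_{0})-E_{\infty}<\infty,
\end{align*}
which is of course consistent with the uniform bound \eqref{uni1}. Then, writing $\int_{t}^{t+1}=\int_{0}^{t+1}-\int_{0}^{t}$ and using the Cauchy criterion for this convergent improper integral, the unit-interval tails $\int_{t}^{t+1}\Vert\boldsymbol{\varphi}'(s)\Vert_{\mathcal{H}_{L,\sigma,0}^{-1}}^{2}\,\mathrm{d}s$ tend to $0$ as $t\to\infty$, which is exactly the claim.

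There is no genuine obstacle here; the only point deserving care is the finiteness of $E_{\infty}$, i.e.\ ruling out $E_{\infty}=-\infty$, and this is precisely what the \emph{a priori} lower bound \eqref{e-lower} supplies. Everything else reduces to the elementary facts that a monotone function bounded from below converges and that a convergent improper integral has vanishing tails over intervals of fixed length.
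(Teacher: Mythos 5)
Your argument is correct and matches the paper's own proof, which likewise deduces both claims from the energy equality \eqref{Energyequ} together with monotonicity and the lower bound \eqref{e-lower}. You simply spell out the elementary steps (monotone bounded convergence and the vanishing of tails of a convergent improper integral) that the paper leaves implicit.
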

\begin{proof}
Since the free energy $E(\boldsymbol{\varphi}(\cdot))$ is monotone non-increasing in time and bounded from below (see \eqref{e-lower}), the conclusion is an easy consequence of the energy equality \eqref{Energyequ}.
\end{proof}

\subsection{Proof of Theorem \ref{eventual}}

We proceed to derive some regularity properties of the weak solution.

\begin{lemma}\label{regh1}
Let $(\boldsymbol{\varphi}, \boldsymbol{\mu})$ be the global weak solution obtained in Proposition \ref{existence}.
For any given $\eta>0$, there exists a positive constant $M_{3}$ such that
\begin{align}
&\Vert\beta(\varphi)\Vert_{L^{\infty}(\eta,\infty;L^{1}(\Omega))}+\Vert\beta_{\Gamma}(\psi)\Vert_{L^{\infty}(\eta,\infty;L^{1}(\Gamma))}\leq M_{3},\label{4.12}\\
&\Vert\boldsymbol{\mu}\Vert_{L^{\infty}(\eta,\infty;\mathcal{H}_{L,\sigma}^{1})}\leq M_{3}.\label{4.13}
\end{align}
\end{lemma}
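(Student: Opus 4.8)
~\textbf{Strategy.}
The plan is to test the identity \eqref{eq3.5}--\eqref{eq3.6} against a suitable multiplier and exploit the monotonicity of $\beta$, $\beta_\Gamma$ together with the compatibility condition \eqref{assum1} to control the $L^1$-norms of $\beta(\varphi)$ and $\beta_\Gamma(\psi)$ first; then I would bootstrap to the $\mathcal{H}^1_{L,\sigma}$-estimate on $\boldsymbol{\mu}$ via an elliptic regularity/duality argument. The natural multiplier for the $L^1$-bound is $\boldsymbol{\varphi}-\overline{m}_0\boldsymbol{1}$ (or, more precisely, $\varphi-\langle\varphi\rangle_\Omega$ in the bulk and $\psi-\langle\psi\rangle_\Gamma$ on $\Gamma$, matching the standard trick for singular potentials): testing the relation $\mu=-\Delta\varphi+\beta(\varphi)+\pi(\varphi)$ by $\varphi-\overline{m}_0$ and the boundary relation $\theta=\partial_{\mathbf n}\varphi-\Delta_\Gamma\psi+\beta_\Gamma(\psi)+\pi_\Gamma(\psi)$ by $\psi-\overline{m}_0$, then adding, makes the $\partial_{\mathbf n}\varphi$ terms cancel against the bulk integration by parts. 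This produces $\int_\Omega\beta(\varphi)(\varphi-\overline{m}_0)\,\mathrm dx+\int_\Gamma\beta_\Gamma(\psi)(\psi-\overline{m}_0)\,\mathrm dS$ on one side, which, since $\overline{m}_0\in(-1,1)$ is strictly interior, dominates $c\big(\|\beta(\varphi)\|_{L^1(\Omega)}+\|\beta_\Gamma(\psi)\|_{L^1(\Gamma)}\big)$ minus lower order terms by the standard inequality for monotone singular functions (see e.g.\ \cite{GMS}, \cite[Lemma~A]{CF15}); here the assumption $\mathbf{(A2)}$ lets the boundary contribution absorb the bulk one if needed.

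\textbf{Estimating the right-hand side.}
The remaining terms are $\big(\boldsymbol{\mu},\boldsymbol{\varphi}-\overline{m}_0\boldsymbol{1}\big)_{\mathcal{L}^2}$, the Dirichlet energy $\|\boldsymbol{\varphi}\|_{\mathcal{V}^1}^2$ coming from integration by parts, and the Lipschitz terms $\big(\boldsymbol{\pi}(\boldsymbol{\varphi}),\boldsymbol{\varphi}-\overline{m}_0\boldsymbol{1}\big)_{\mathcal{L}^2}$. The Dirichlet energy and the $\boldsymbol\pi$-terms are uniformly bounded in time by \eqref{uni1}. For the $\boldsymbol\mu$-term I would write $\boldsymbol{\mu}=\mathbf P\boldsymbol{\mu}+\overline{m}(\boldsymbol{\mu})\boldsymbol 1$, bound $\|\mathbf P\boldsymbol{\mu}\|$ using \eqref{uni2} once we know it is bounded in $L^\infty(\eta,\infty;\cdot)$ — which requires first upgrading \eqref{uni2} from $L^2(0,\infty)$ to an $L^\infty(\eta,\infty)$ bound; this is exactly where Lemma \ref{time-regu} enters, since $\mathbf P\boldsymbol{\mu}=-\mathcal{A}_{L,\sigma}(\boldsymbol{\varphi}')$ by \eqref{equi} and $\|\boldsymbol{\varphi}'\|_{\mathcal{H}^{-1}_{L,\sigma,0}}$ is bounded on $[\eta,\infty)$ by \eqref{4.6}, so $\|\mathbf P\boldsymbol\mu\|_{\mathcal{H}^1_{L,\sigma,0}}\le M_2$ for $t\ge\eta$. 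The spatial mean $\overline{m}(\boldsymbol{\mu})$ is the genuinely delicate part: integrating the relations over $\Omega$ and $\Gamma$ and adding gives $(|\Omega|+|\Gamma|)\,\overline{m}(\boldsymbol\mu)=\int_\Omega\beta(\varphi)+\pi(\varphi)\,\mathrm dx+\int_\Gamma\beta_\Gamma(\psi)+\pi_\Gamma(\psi)\,\mathrm dS$ (the $\partial_{\mathbf n}\varphi$ boundary integral cancels again), so $|\overline{m}(\boldsymbol\mu)|$ is itself controlled by the same $L^1$-norms we are trying to bound. Hence the term $\overline{m}(\boldsymbol\mu)\,\big(\int_\Omega(\varphi-\overline{m}_0)+\int_\Gamma(\psi-\overline{m}_0)\big)$ is \emph{not} automatically small — but $\int_\Omega(\varphi-\langle\varphi\rangle_\Omega)\,\mathrm dx=0$, so choosing the centered multipliers $\varphi-\langle\varphi\rangle_\Omega$, $\psi-\langle\psi\rangle_\Gamma$ instead kills this coupling entirely; one then pays a harmless constant to pass from $\langle\varphi\rangle_\Omega,\langle\psi\rangle_\Gamma$ back to $\overline{m}_0$ using $\|\boldsymbol\varphi\|_{\mathcal{V}^1}$-bounds and the mass conservation \eqref{Mass-conser}. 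This closes the estimate and yields \eqref{4.12}.

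\textbf{From $L^1$ to $\boldsymbol\mu\in\mathcal{H}^1_{L,\sigma}$.}
Once \eqref{4.12} holds, \eqref{4.13} follows by splitting $\|\boldsymbol\mu\|_{\mathcal{H}^1_{L,\sigma}}\le C\big(\|\mathbf P\boldsymbol\mu\|_{\mathcal{H}^1_{L,\sigma,0}}+|\overline{m}(\boldsymbol\mu)|\big)$: the first summand is $\le M_2$ on $[\eta,\infty)$ by \eqref{equi}--\eqref{4.6} as noted, and the second is $\le C(1+M_3)$ by the mean identity above together with \eqref{4.12} and the Lipschitz bound on $\boldsymbol\pi$ (using $|\pi(\varphi)|\le K+|\pi(0)|$ since $|\varphi|\le1$). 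Collecting the constants gives $M_3$ depending only on $\eta$, $M_1$, $M_2$, the Lipschitz constants, $\varpi$, $\overline{m}_0$, $\Omega$, $\Gamma$. \textbf{Main obstacle.} The crux is the coupling between $\overline{m}(\boldsymbol\mu)$ and the very $L^1$-quantities being estimated; handling it correctly forces the use of mean-zero test functions adapted to the bulk and the boundary separately (rather than the single generalized-mean shift $\overline{m}_0\boldsymbol 1$), and one must then check that the cancellation of the $\partial_{\mathbf n}\varphi$ trace terms still occurs and that the lower bound on $\int\beta(\varphi)(\varphi-\langle\varphi\rangle_\Omega)$ survives — this is where the strict interiority $\langle\varphi(t)\rangle_\Omega\in(-1+c,1-c)$ uniformly in $t\ge\eta$ (a consequence of \eqref{uni1} via the Sobolev/Poincaré control of oscillation, or directly of \eqref{Mass-conser}) is indispensable. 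The remaining steps are routine, following \cite[Section~3]{GMS}, \cite[Section~3]{FW} and \cite{LvWu}.
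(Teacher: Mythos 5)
Your overall skeleton matches the paper's proof: test the chemical-potential relations with $\boldsymbol{\omega}=\boldsymbol{\varphi}-\overline{m}_{0}\boldsymbol{1}$, use the lower bound \eqref{4.16} to extract the $L^{1}$-norms of $\beta(\varphi)$, $\beta_\Gamma(\psi)$, control $\mathbf{P}\boldsymbol{\mu}$ on $[\eta,\infty)$ via \eqref{equi} and \eqref{4.6}, and finally recover $\overline{m}(\boldsymbol{\mu})$ from the $L^{1}$-bounds by averaging \eqref{eq3.5}--\eqref{eq3.6}. However, the step you single out as the ``crux'' is a misdiagnosis. The coupling term $\overline{m}(\boldsymbol{\mu})\big(\int_{\Omega}(\varphi-\overline{m}_{0})\,\mathrm{d}x+\int_{\Gamma}(\psi-\overline{m}_{0})\,\mathrm{d}S\big)$ is not merely ``not automatically small'': it is identically zero, because the bracket equals $(|\Omega|+|\Gamma|)\big(\overline{m}(\boldsymbol{\varphi}(t))-\overline{m}_{0}\big)$, which vanishes by the mass conservation \eqref{Mass-conser}. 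Equivalently, $(\boldsymbol{\omega},\boldsymbol{\mu})_{\mathcal{L}^{2}}=(\boldsymbol{\omega},\mathbf{P}\boldsymbol{\mu})_{\mathcal{L}^{2}}$ since $\boldsymbol{\omega}(t)\in\mathcal{L}^{2}_{(0)}$; this is exactly how the paper's identity \eqref{eq4.3} closes the estimate without $\overline{m}(\boldsymbol{\mu})$ ever appearing. The multiplier $\boldsymbol{\omega}$ you started with already does everything you need.

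The replacement by separately centered multipliers $\varphi-\langle\varphi\rangle_{\Omega}$, $\psi-\langle\psi\rangle_{\Gamma}$, which you introduce to cure this non-problem, is where genuine gaps appear. First, the normal-derivative trace terms no longer cancel: integration by parts leaves a residue $(\langle\varphi\rangle_{\Omega}-\langle\psi\rangle_{\Gamma})\int_{\Gamma}\partial_{\mathbf{n}}\varphi\,\mathrm{d}S$, since in general $\langle\varphi(t)\rangle_{\Omega}\neq\langle\psi(t)\rangle_{\Gamma}$ (only the combined mean \eqref{gmean} is conserved for $L\in[0,\infty)$; separate conservation holds only in the excluded case $L=\infty$), and at this stage you have no uniform-in-time control of $\partial_{\mathbf{n}}\varphi$ on $\Gamma$ --- that requires the $\mathcal{V}^{2}$-regularity established only later in Lemma \ref{regh3}. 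Second, the analogue of \eqref{4.16} with center $\langle\varphi(t)\rangle_{\Omega}$ needs $\langle\varphi(t)\rangle_{\Omega}$ to stay in a fixed compact subset of $(-1,1)$ uniformly in $t$; this does not follow from \eqref{Mass-conser}, which constrains only the combined mean (a bulk mean near $1$ can be compensated by the boundary mean), nor from \eqref{uni1}, since $\widehat{\beta}\in C([-1,1])$ is finite at $\pm 1$ and the energy bound does not forbid $\varphi$ from concentrating near a pure state in the bulk. Both difficulties disappear if you keep the multiplier $\boldsymbol{\omega}$; with that correction your argument coincides with the paper's proof.
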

\begin{proof}
The case $(L,\sigma)\in\{0\}\times [0,\infty)$ has been treated in \cite[Lemma 3.4]{FW}. Below we extend the result to the general case \eqref{choice}. According to \eqref{equi}, \eqref{4.6}, it holds
\begin{align}
\Vert\mathbf{P}\boldsymbol{\mu}\Vert_{L^{\infty}(\eta,\infty;\mathcal{H}_{L,\sigma,0}^{1})}=\Vert\boldsymbol{\varphi}'\Vert_{L^{\infty}(\eta,\infty;\mathcal{H}_{L,\sigma,0}^{-1})}\leq M_{2}.\notag
\end{align}
Then, by the generalized Poincar\'e's inequality (see Lemma \ref{equivalent}), we get
\begin{align*}
\Vert\boldsymbol{\mu}\Vert_{\mathcal{H}_{L,\sigma}^{1}} \leq\big(|\Omega|+|\Gamma|\big)^{1/2}\overline{m}(\boldsymbol{\mu}) +\Vert\mathbf{P}\boldsymbol{\mu}\Vert_{\mathcal{H}_{L,\sigma}^{1}} \leq\big(|\Omega|+|\Gamma|\big)^{1/2}\overline{m}(\boldsymbol{\mu}) +2c_{P}\Vert\mathbf{P}\boldsymbol{\mu}\Vert_{\mathcal{H}_{L,\sigma,0}^{1}}.
\end{align*}
In order to obtain \eqref{4.13}, it remains to control the mean $\overline{m}(\boldsymbol{\mu})$. Taking $\boldsymbol{z}=\mathcal{A}_{L,\sigma}\big(\boldsymbol{\omega}(s)\big)$ in \eqref{defn2.1} yields
\begin{align}
\big(\boldsymbol{\omega}'(s),\boldsymbol{\omega}(s)\big)_{\mathcal{H}_{L,\sigma,0}^{-1}} &=\Big(\mathcal{A}_{L,\sigma}\big(\boldsymbol{\omega}'(s)\big),\mathcal{A}_{L,\sigma}\big(\boldsymbol{\omega}(s)\big)\Big)_{\mathcal{H}_{L,\sigma,0}^{1}}\notag\\
&=-\Big(\mathbf{P}\boldsymbol{\mu}(s),\mathcal{A}_{L,\sigma}\big(\boldsymbol{\omega}(s)\big)\Big)_{\mathcal{H}_{L,\sigma,0}^{1}}\notag\\
&=-\big\langle\boldsymbol{\omega}(s),\mathbf{P}\boldsymbol{\mu}(s)\big\rangle_{\mathcal{H}_{L,\sigma,0}^{-1},\mathcal{H}_{L,\sigma,0}^{1}}\notag\\
&=-\big(\boldsymbol{\omega}(s),\mathbf{P}\boldsymbol{\mu}(s)\big)_{\mathcal{L}^{2}}\notag\\
&=-\big(\boldsymbol{\omega}(s),\boldsymbol{\mu}(s)\big)_{\mathcal{L}^{2}}\notag\\
&=-\Vert\boldsymbol{\omega}(s)\Vert_{\mathcal{V}^{1}_{(0)}}^{2} -\Big(\boldsymbol{\beta}\big(\boldsymbol{\varphi}(s)\big), \boldsymbol{\omega}(s)\Big)_{\mathcal{L}^{2}} -\Big(\boldsymbol{\pi}\big(\boldsymbol{\varphi}(s)\big), \boldsymbol{\omega}(s)\Big)_{\mathcal{L}^{2}},
\label{eq4.3}
\end{align}
for almost all $s>0$. Due to the assumptions $\mathbf{(A1)}$, $\mathbf{(A4)}$, there exist positive constants $c_{3}$ and $c_{4}$ such that (cf. \cite{MZ04,GMS09})
\begin{align}
&\beta(r)(r-\overline{m}_{0})\geq c_{3}|\beta(r)|-c_{4},\quad
\beta_{\Gamma}(r)(r-\overline{m}_{0})\geq c_{3}|\beta_{\Gamma}(r)|-c_{4},\qquad\forall\,r\in(-1,1).\label{4.16}
\end{align}
Recalling the fact  $\boldsymbol{\omega}=\boldsymbol{\varphi}-\overline{m}_{0}\boldsymbol{1}$, we infer from \eqref{eq4.3}--\eqref{4.16} that
\begin{align}
&c_{3}\int_{\Omega}\big|\beta\big(\varphi(s)\big)\big|\,\mathrm{d}x+c_{3}\int_{\Gamma}\big|\beta_{\Gamma}\big(\psi(s)\big)\big|\,\mathrm{d}S\notag\\
&\quad\leq c_{4}\big(|\Omega|+|\Gamma|\big)+\Vert\boldsymbol{\pi}\big(\boldsymbol{\varphi}(s)\big)\Vert_{\mathcal{L}^{2}}\Vert\boldsymbol{\omega}(s)\Vert_{\mathcal{L}^{2}}+\Vert\boldsymbol{\omega}'(s)\Vert_{\mathcal{H}_{L,\sigma,0}^{-1}}\Vert\boldsymbol{\omega}(s)\Vert_{\mathcal{H}_{L,\sigma,0}^{-1}}\notag\\
&\quad\leq c_{4}\big(|\Omega|+|\Gamma|\big)+\Big(\big(K+K_{\Gamma}\big)\Vert\boldsymbol{\varphi}(s)\Vert_{\mathcal{L}^{2}}+\Vert\boldsymbol{\pi}(\boldsymbol{0})\Vert_{\mathcal{L}^{2}}\Big)\Big(\Vert\boldsymbol{\varphi}(s)\Vert_{\mathcal{L}^{2}}+|\overline{m}_{0}|\big(|\Omega|+|\Gamma|\big)^{1/2}\Big)\notag\\
&\qquad+C\Vert\boldsymbol{\varphi}'(s)\Vert_{\mathcal{H}_{L,\sigma,0}^{-1}}\big(\Vert\boldsymbol{\varphi}(s)\Vert_{\mathcal{L}^{2}}+1\big),\quad\text{for a.a. }s\geq\eta,\notag
\end{align}
which together with \eqref{uni1}, \eqref{4.6} yields \eqref{4.12}.
Thus, by \eqref{eq3.5}, \eqref{eq3.6}, we obtain
\begin{align}
|\overline{m}(\boldsymbol{\mu}(s))|\leq\frac{1}{|\Omega|+|\Gamma|}\Big(\int_{\Omega}\big|\beta\big(\varphi(s)\big)\big|\,\mathrm{d}x+\int_{\Gamma}\big|\beta_{\Gamma}\big(\psi(s)\big)\big|\,\mathrm{d}S+\int_{\Omega}\big|\pi\big(\varphi(s)\big)\big|\,\mathrm{d}x+\int_{\Gamma}\big|\pi_{\Gamma}\big(\psi(s)\big)\big|\,\mathrm{d}S\Big),\label{4.16'}
\end{align}
for almost all $s\geq\eta$. Combining \eqref{uni1}, \eqref{4.12} and \eqref{4.16'}, we arrive at the conclusion \eqref{4.13}.
\end{proof}
\begin{remark}\label{regh1r}\rm
Thanks to \eqref{4.6}, \eqref{4.13}, we can view \eqref{defn2.1} as an elliptic problem for $\boldsymbol{\mu}$ such that
\begin{align}
a_{L,\sigma}\big(\boldsymbol{\mu}(t),\boldsymbol{z}\big)=-\big(\boldsymbol{\omega}'(t),\boldsymbol{z}\big)_{\mathcal{L}^{2}} ,\qquad\text{for all }\boldsymbol{z}\in\mathcal{H}_{L,\sigma}^{1}\text{ and a.a. }t\in[\eta,\infty).\notag
\end{align}
From the elliptic regularity theorem, we find
$\boldsymbol{\mu}\in L^{2}_{\mathrm{uloc}}(\eta,\infty;\mathcal{V}_{\sigma}^{2})$.
This implies that $(\bm{\varphi}, \bm{\mu})$ becomes a strong solution of problem \eqref{model} on $(0,\infty)$, since $\eta>0$ is arbitrary.
\end{remark}
\begin{lemma}\label{regh2}
Let $\boldsymbol{\varphi}$ be the global weak solution obtained in Proposition \ref{existence}.

(1) Let $d=2$. For any given $\eta>0$, $p\in[2,\infty)$, there exists a positive constant $\widetilde{C}_{0}$ such that
\begin{align}
\Vert\beta(\varphi)\Vert_{L^{\infty}(\eta,\infty;L^{p}(\Omega))}+\Vert\beta(\psi)\Vert_{L^{\infty}(\eta,\infty;L^{p}(\Gamma))}\leq\widetilde{C}_{0}\sqrt{p},\label{eq4.4}
\end{align}
where $\widetilde{C}_{0}$ is independent of $p$.

(2) Let $d=3$. For any given $\eta>0$, there exists a positive constant $\widetilde{C}_1$ such that
\begin{align}
\Vert\beta(\varphi)\Vert_{L^{\infty}(\eta,\infty;L^{p}(\Omega))} +\Vert\beta(\psi)\Vert_{L^{\infty}(\eta,\infty;L^{p}(\Gamma))}\leq \widetilde{C}_1,\notag
\end{align}
where $p\in[1,6]$ if $\sigma>0$ and $p\in[1,4]$ if $\sigma=0$, the constant $\widetilde{C}_{1}$ may depend on $p$.
\end{lemma}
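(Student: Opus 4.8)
The plan is to test the elliptic equations for $\mu$ and $\theta$ (equations \eqref{eq3.5}--\eqref{eq3.6}) with a suitable power of $\beta(\varphi)$ and $\beta_\Gamma(\psi)$, in the spirit of the classical $L^p$-estimates for singular potentials (cf.\ \cite{GMS09,GMS,FW}). Concretely, fix $\eta>0$ and $p\ge 2$, and multiply \eqref{eq3.5} by $|\beta(\varphi)|^{p-1}\mathrm{sgn}(\beta(\varphi))$ in the bulk, integrate over $\Omega$, and similarly multiply \eqref{eq3.6} by $|\beta_\Gamma(\psi)|^{p-1}\mathrm{sgn}(\beta_\Gamma(\psi))$ on $\Gamma$; then add the two identities. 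The point of adding is that the boundary term $\int_\Gamma \partial_{\mathbf n}\varphi\,|\beta_\Gamma(\psi)|^{p-1}\mathrm{sgn}(\beta_\Gamma(\psi))\,\mathrm{d}S$ produced in the first integration by parts on $\Omega$ will be matched (up to the compatibility constant $\varrho$ from $\mathbf{(A2)}$) against the boundary contribution from the $\Gamma$-equation. Indeed, integrating $-\Delta\varphi\cdot|\beta(\varphi)|^{p-1}\mathrm{sgn}(\beta(\varphi))$ by parts gives a nonnegative gradient term $\int_\Omega (p-1)|\beta(\varphi)|^{p-2}\beta'(\varphi)|\nabla\varphi|^2$ plus the normal-derivative boundary term; on $\Gamma$, testing $-\Delta_\Gamma\psi\cdot|\beta_\Gamma(\psi)|^{p-1}\mathrm{sgn}(\beta_\Gamma(\psi))$ yields another nonnegative surface-gradient term. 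The dangerous boundary term $\int_\Gamma \partial_{\mathbf n}\varphi\,(\cdots)$ is controlled because, by \eqref{assum1}, $|\beta(\varphi)|^{p-1}$ on $\Gamma$ is dominated by $(\varrho|\beta_\Gamma(\psi)|+c_0)^{p-1}$, so a Young-type inequality absorbs it against the good surface terms and lower-order quantities; this is exactly where the compatibility assumption $\mathbf{(A2)}$ is essential.

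After discarding the nonnegative diffusion terms, one is left with an estimate of the form
\begin{align}
\int_\Omega |\beta(\varphi)|^p\,\mathrm{d}x + \int_\Gamma |\beta_\Gamma(\psi)|^p\,\mathrm{d}S
&\le C\int_\Omega |\mu|\,|\beta(\varphi)|^{p-1}\,\mathrm{d}x + C\int_\Gamma |\theta|\,|\beta_\Gamma(\psi)|^{p-1}\,\mathrm{d}S \notag\\
&\quad + C\big(\|\pi(\varphi)\|_{L^p(\Omega)}^p + \|\pi_\Gamma(\psi)\|_{L^p(\Gamma)}^p\big) + C_p, \notag
\end{align}
where the last two terms are already bounded uniformly in $t\ge\eta$ since $\varphi,\psi$ take values in $[-1,1]$ and $\pi,\pi_\Gamma$ are Lipschitz. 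The first two terms are handled by Hölder's inequality: $\int_\Omega |\mu|\,|\beta(\varphi)|^{p-1} \le \|\mu\|_{L^p(\Omega)}\,\|\beta(\varphi)\|_{L^p(\Omega)}^{p-1}$, which by Young's inequality with exponents $p$ and $p/(p-1)$ is absorbed into the left side modulo a term $C\|\mu\|_{L^p(\Omega)}^p$ (and similarly for $\theta$). So the whole estimate reduces to bounding $\|\mu\|_{L^p(\Omega)}$ and $\|\theta\|_{L^p(\Gamma)}$ uniformly for $t\ge\eta$. By Lemma \ref{regh1} we already have $\bm\mu\in L^\infty(\eta,\infty;\mathcal H^1_{L,\sigma})$, hence $\mu\in L^\infty(\eta,\infty;H^1(\Omega))$. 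In $d=2$ the Sobolev embedding $H^1(\Omega)\hookrightarrow L^p(\Omega)$ holds for all $p<\infty$ with embedding constant growing like $C\sqrt p$ (the two-dimensional Sobolev/Trudinger–Moser scaling), and $H^{1/2}(\Gamma)\hookrightarrow L^p(\Gamma)$ similarly; tracking this $\sqrt p$ dependence through the absorption argument yields \eqref{eq4.4}. In $d=3$, $H^1(\Omega)\hookrightarrow L^6(\Omega)$ gives control of $\mu$ in $L^6(\Omega)$, and when $\sigma>0$ one has the extra regularity $\theta\in V_\Gamma=H^1(\Gamma)\hookrightarrow L^p(\Gamma)$ for every $p<\infty$ so the boundary term poses no restriction; when $\sigma=0$ (only possible for $L=0$), the trace bound $\theta=\mu|_\Gamma\in H^{1/2}(\Gamma)\hookrightarrow L^4(\Gamma)$ limits $p$ to $[1,4]$, and correspondingly in the bulk one only propagates the weaker exponent — this matches the dichotomy in the statement.

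The main obstacle is making the integration-by-parts rigorous: the weak solution only has $\bm\varphi\in\mathcal V^2$ for $t\ge\eta$ (by Theorem \ref{eventual}, or rather this lemma is a step toward it, so at this stage we use Lemma \ref{regh1} and Remark \ref{regh1r}) and $\beta(\varphi)\in L^1$, so $|\beta(\varphi)|^{p-1}$ need not a priori lie in $H^1(\Omega)$ and the pointwise manipulations of $\beta$ near $\pm1$ are delicate. The standard remedy is to carry out the estimate at the level of the Yosida approximation: work with $\beta_\varepsilon(\varphi_\varepsilon)$, which is globally Lipschitz, so that $|\beta_\varepsilon(\varphi_\varepsilon)|^{p-1}\mathrm{sgn}(\beta_\varepsilon(\varphi_\varepsilon))\in H^1(\Omega)$ and all integrations by parts are legitimate, derive the bound uniformly in $\varepsilon$, and pass to the limit $\varepsilon\to0$ using the convergences recalled in Section \ref{approx} together with the fact that $|\beta_\varepsilon(r)|\le|\beta(r)|$ and $\beta_{\Gamma,\varepsilon}$ inherits the compatibility condition \eqref{assum1} (with $\varrho$ replaced by $\varrho$ up to an $O(\varepsilon)$ correction, as in \cite{CC,CF15}). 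A secondary technical point is the careful bookkeeping of the $\sqrt p$ growth in $d=2$: one must ensure that the constants introduced by Young's inequality and by absorbing $\|\mu\|_{L^p}^p$ do not accumulate an extra $p$-dependence beyond $\sqrt p$, which is guaranteed because the only $p$-dependent constant entering multiplicatively is the Sobolev constant, and the absorption is done once, not iteratively in $p$.
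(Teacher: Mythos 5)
Your overall strategy is the right one and is essentially what the paper (via its reference to \cite{FW}) has in mind: test the chemical-potential identities with a power of the singular nonlinearity, exploit $\mathbf{(A2)}$ on the boundary, reduce everything to the uniform $\mathcal{H}^1_{L,\sigma}$ bound on $\bm{\mu}$ from Lemma \ref{regh1} together with the critical Sobolev embedding (Lemma \ref{critical}) in $d=2$, and justify the computation at the level of the Yosida approximation. However, there is one genuine flaw in the execution: your choice of boundary test function. You multiply \eqref{eq3.6} by $|\beta_\Gamma(\psi)|^{p-1}\mathrm{sgn}(\beta_\Gamma(\psi))$, while the integration by parts of $-\Delta\varphi\cdot|\beta(\varphi)|^{p-1}\mathrm{sgn}(\beta(\varphi))$ over $\Omega$ produces the boundary term $-\int_\Gamma \partial_{\mathbf{n}}\varphi\,|\beta(\psi)|^{p-1}\mathrm{sgn}(\beta(\psi))\,\mathrm{d}S$ (the trace of the \emph{bulk} test function, not the one you wrote). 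With your choice the two $\partial_{\mathbf{n}}\varphi$ contributions do not cancel, and the residual term $\int_\Gamma \partial_{\mathbf{n}}\varphi\,\big(|\beta(\psi)|^{p-1}\mathrm{sgn}(\beta(\psi))-|\beta_\Gamma(\psi)|^{p-1}\mathrm{sgn}(\beta_\Gamma(\psi))\big)\,\mathrm{d}S$ cannot be absorbed the way you describe: $\partial_{\mathbf{n}}\varphi$ has no sign, $\mathbf{(A2)}$ only compares magnitudes, and a Young inequality would leave you with $\int_\Gamma|\partial_{\mathbf{n}}\varphi|^p\,\mathrm{d}S$, for which no bound uniform in $t\ge\eta$ is available at this stage — that would require the $L^\infty(\eta,\infty;\mathcal{V}^2)$ estimate of Lemma \ref{regh3}, which is proved \emph{after} and \emph{using} the present lemma, so the argument becomes circular.

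The repair is to test \eqref{eq3.6} with the trace of the bulk test function, i.e. with $|\beta(\psi)|^{p-1}\mathrm{sgn}(\beta(\psi))$. Then the $\partial_{\mathbf{n}}\varphi$ terms cancel exactly, the bulk and surface gradient terms are nonnegative by monotonicity of $r\mapsto|\beta(r)|^{p-2}\beta(r)$, and $\mathbf{(A2)}$ enters at a different place: since $\beta$ and $\beta_\Gamma$ share the sign of their argument, $\int_\Gamma\beta_\Gamma(\psi)|\beta(\psi)|^{p-2}\beta(\psi)\,\mathrm{d}S=\int_\Gamma|\beta_\Gamma(\psi)|\,|\beta(\psi)|^{p-1}\,\mathrm{d}S\ge \varrho^{-1}\int_\Gamma|\beta(\psi)|^{p}\,\mathrm{d}S-c_0\varrho^{-1}\int_\Gamma|\beta(\psi)|^{p-1}\,\mathrm{d}S$, which supplies the coercive boundary contribution. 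This is also why the statement bounds $\beta(\psi)$ on $\Gamma$ (the bulk nonlinearity at the trace) rather than $\beta_\Gamma(\psi)$ — a distinction your write-up blurs. The remainder of your argument (H\"older/Young absorption, the $\sqrt{p}$ bookkeeping through Lemma \ref{critical} in $d=2$, the $L^6(\Omega)$/$L^4(\Gamma)$ dichotomy in $d=3$ according to $\sigma>0$ or $\sigma=0$, and the $\varepsilon$-regularization to legitimize the integration by parts) is correct once this cancellation is in place.
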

\begin{proof}
The proof is similar to that for \cite[(3.27)]{FW}, where the case $(L,\sigma)\in\{0\}\times [0,\infty)$ has been treated. In particular, the compatibility condition $\mathbf{(A2)}$ is essentially used.
Here, only two main differences have to be taken into account: (1) when $d=2$, one needs to apply Lemma \ref{critical} to control the $L^p$-norm; (2) when $L>0$, the boundary chemical potential $\theta$ is no longer the trace of the bulk chemical potential $\mu$. The details are omitted.
\end{proof}

\begin{lemma}\label{regh3}
Let $\boldsymbol{\varphi}$ be the global weak solution obtained in Proposition \ref{existence}.
For any given $\eta>0$, there exists a positive constant $M_{4}$ such that
\begin{align}
& \Vert\boldsymbol{\varphi}\Vert_{L^{\infty}(\eta,\infty;\mathcal{V}^{2})}\leq M_{4} ,\qquad \Vert\beta_{\Gamma}(\psi)\Vert_{L^{\infty}(\eta,\infty;H_{\Gamma})}\leq M_{4}.
\label{4.29}
\end{align}
\end{lemma}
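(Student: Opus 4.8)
The plan is to bootstrap the elliptic system \eqref{eq3.5}--\eqref{eq3.6}, supplemented with the trace condition $\varphi|_{\Gamma}=\psi$, through a chain of \emph{decoupled} regularity estimates, each uniform on $[\eta,\infty)$. I will use the bounds already in hand: $\boldsymbol{\varphi}\in L^{\infty}(0,\infty;\mathcal{V}^{1})$ by \eqref{uni1}, $\boldsymbol{\mu}\in L^{\infty}(\eta,\infty;\mathcal{H}_{L,\sigma}^{1})$ by \eqref{4.13}, and $\beta(\varphi)\in L^{\infty}(\eta,\infty;H)$, which is Lemma \ref{regh2} with $p=2$ (admissible for $d\in\{2,3\}$ and any $\sigma\geq0$). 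Since $\pi,\pi_{\Gamma}$ are globally Lipschitz and $|\varphi|,|\psi|\leq1$, the datum $f_{1}:=\mu-\beta(\varphi)-\pi(\varphi)$ of the bulk equation $-\Delta\varphi=f_{1}$ is bounded in $L^{\infty}(\eta,\infty;H)$, while $\theta,\pi_{\Gamma}(\psi)\in L^{\infty}(\eta,\infty;H_{\Gamma})$. The first task is to show $\partial_{\mathbf{n}}\varphi\in L^{\infty}(\eta,\infty;H_{\Gamma})$. For a.a.\ $t\geq\eta$ I split $\varphi(t)=\varphi_{1}(t)+\varphi_{2}(t)$, where $-\Delta\varphi_{1}=f_{1}$ in $\Omega$ with $\varphi_{1}=0$ on $\Gamma$, and $\varphi_{2}$ is the harmonic extension of $\psi(t)$; elliptic regularity gives $\|\varphi_{1}\|_{H^{2}(\Omega)}\leq C\|f_{1}\|_{H}$, hence $\|\partial_{\mathbf{n}}\varphi_{1}\|_{H_{\Gamma}}\leq C\|f_{1}\|_{H}$, while $\partial_{\mathbf{n}}\varphi_{2}$ is the Dirichlet-to-Neumann image of $\psi$, a bounded operation from $V_{\Gamma}$ into $H_{\Gamma}$, so $\|\partial_{\mathbf{n}}\varphi_{2}\|_{H_{\Gamma}}\leq C\|\psi\|_{V_{\Gamma}}$; the uniform bounds on $f_{1}$ and $\psi$ then yield $\|\partial_{\mathbf{n}}\varphi(t)\|_{H_{\Gamma}}\leq C$ for a.a.\ $t\geq\eta$.

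Next I upgrade $\beta_{\Gamma}(\psi)$ from $L^{1}(\Gamma)$ (Lemma \ref{regh1}) to $L^{2}(\Gamma)$. Fix $t\geq\eta$ with $\psi(t)\in H^{2}(\Gamma)$ — true for a.a.\ $t$ by Proposition \ref{existence} — and for $k\in\mathbb{N}$ let $\beta_{\Gamma,k}$ be the truncation of $\beta_{\Gamma}$ at level $k$, so that $\beta_{\Gamma,k}(\psi)\in V_{\Gamma}$. Multiplying \eqref{eq3.6} by $\beta_{\Gamma,k}(\psi)$, integrating over $\Gamma$, integrating by parts, and using $\beta_{\Gamma,k}'\geq0$ together with $\beta_{\Gamma}(\psi)\beta_{\Gamma,k}(\psi)\geq|\beta_{\Gamma,k}(\psi)|^{2}$, I obtain
\begin{align*}
\|\beta_{\Gamma,k}(\psi)\|_{H_{\Gamma}}^{2}\leq\big(\|\theta\|_{H_{\Gamma}}+\|\partial_{\mathbf{n}}\varphi\|_{H_{\Gamma}}+\|\pi_{\Gamma}(\psi)\|_{H_{\Gamma}}\big)\|\beta_{\Gamma,k}(\psi)\|_{H_{\Gamma}},
\end{align*}
whence $\|\beta_{\Gamma,k}(\psi)\|_{H_{\Gamma}}\leq C$ uniformly in $k$ and in a.a.\ $t\geq\eta$, by Step 1 and the bounds above. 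Letting $k\to\infty$ and invoking Fatou's lemma gives the second estimate in \eqref{4.29}.

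Now $f_{2}:=\theta-\partial_{\mathbf{n}}\varphi-\beta_{\Gamma}(\psi)-\pi_{\Gamma}(\psi)\in L^{\infty}(\eta,\infty;H_{\Gamma})$, so elliptic regularity for $-\Delta_{\Gamma}\psi=f_{2}$ on the closed manifold $\Gamma$ gives $\psi\in L^{\infty}(\eta,\infty;H^{2}(\Gamma))\hookrightarrow L^{\infty}(\eta,\infty;H^{3/2}(\Gamma))$. Since $-\Delta\varphi=f_{1}\in L^{\infty}(\eta,\infty;H)$, elliptic regularity for the Dirichlet problem in $\Omega$ then gives $\varphi\in L^{\infty}(\eta,\infty;H^{2}(\Omega))$; combined with $\varphi|_{\Gamma}=\psi$ this is the first estimate in \eqref{4.29}, and \eqref{4.29} follows with $M_{4}$ the maximum of the constants produced.

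The crux is the normal-trace estimate: before $\psi\in H^{2}(\Gamma)$ is available one cannot use the trace theorem to bound $\partial_{\mathbf{n}}\varphi$ in $H_{\Gamma}$, yet this bound is exactly what makes the $\beta_{\Gamma}(\psi)$-estimate close; the harmonic-extension/Dirichlet-to-Neumann splitting circumvents this circularity because it needs only $\psi\in V_{\Gamma}$, which \eqref{uni1} already furnishes. The only other point requiring care is the rigour of the test in the second step, which is why it is performed against the Lipschitz truncations $\beta_{\Gamma,k}(\psi)$ rather than $\beta_{\Gamma}(\psi)$ itself; alternatively, the same bound can be derived at the level of the approximating problem \eqref{appro}--\eqref{approini}, uniformly in $\varepsilon$, and then passed to the limit.
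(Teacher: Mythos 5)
Your proof is correct, but it takes a genuinely different route from the paper's. The paper handles the troublesome term $\partial_{\mathbf{n}}\varphi$ in the boundary equation by the trace interpolation $\Vert\partial_{\mathbf{n}}\varphi\Vert_{H_{\Gamma}}^{2}\leq\gamma\Vert\varphi\Vert_{H^{2}(\Omega)}^{2}+C_{\gamma}\Vert\varphi\Vert_{H}^{2}$, which leaves a small multiple of $\Vert\varphi\Vert_{H^{2}(\Omega)}^{2}$ inside the $\beta_{\Gamma}(\psi)$-estimate (this is \eqref{4.35}); it then invokes the \emph{coupled} bulk--surface elliptic estimate of \cite[Corollary A.1]{MZ} for the full Wentzell-type system and closes the argument by absorbing the $\gamma$-term, obtaining the $\mathcal{V}^{2}$-bound and the $H_{\Gamma}$-bound on $\beta_{\Gamma}(\psi)$ simultaneously. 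You instead decouple the system: the splitting of $\varphi$ into a homogeneous Dirichlet solution plus the harmonic extension of $\psi$ gives an \emph{a priori} bound $\Vert\partial_{\mathbf{n}}\varphi\Vert_{H_{\Gamma}}\leq C$ before any $H^{2}$-information on $\varphi$ is available, after which the $\beta_{\Gamma}(\psi)$-estimate closes on its own and the two elliptic problems (surface, then bulk) are solved in sequence. Both arguments are sound; yours trades the coupled elliptic lemma and the absorption trick for the mapping property of the Dirichlet-to-Neumann operator from $V_{\Gamma}$ into $H_{\Gamma}$, and it yields the intermediate regularity $\psi\in L^{\infty}(\eta,\infty;H^{2}(\Gamma))$ explicitly.

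One point deserves care: the boundedness of the Dirichlet-to-Neumann map from $H^{1}(\Gamma)$ into $L^{2}(\Gamma)$ is true (it is a first-order elliptic pseudodifferential operator on the smooth closed manifold $\Gamma$, or one can argue via a Rellich identity), but it does \emph{not} follow from the naive normal-derivative trace theorem, since the harmonic extension of $\psi\in H^{1}(\Gamma)$ lies only in $H^{3/2}(\Omega)$, which is exactly the borderline case where that trace fails for general functions. You should state or cite this property explicitly rather than treat it as an immediate consequence of elliptic regularity. With that reference supplied, the proof is complete, and the truncation argument in your second step (or the equivalent computation at the level of the approximating problem \eqref{appro}--\eqref{approini}) is the same rigorization the paper implicitly relies on when it quotes \cite[(3.40)]{FW}.
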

\begin{proof}
Applying the same argument for \cite[(3.40)]{FW}, i.e., the case $(L,\sigma)\in\{0\}\times [0,\infty)$, we can deduce that
\begin{align}
\Vert\beta_{\Gamma}(\psi)\Vert_{H_{\Gamma}}^{2} \leq2\Vert \theta-\pi_\Gamma(\psi)\Vert_{H_{\Gamma}}^{2} +2\gamma\Vert\varphi\Vert_{H^{2}(\Omega)}^{2}+2C_{\gamma}\Vert\varphi\Vert_{H}^{2},\label{4.35}
\end{align}
for any $\gamma>0$, where the positive constant $C_\gamma$ depends on $\gamma$.
Next, it follows from the elliptic regularity theory (see, e.g., \cite[Corollary A.1]{MZ}) that
\begin{align}
\Vert\boldsymbol{\varphi}\Vert_{\mathcal{V}^{2}}\leq C\Big(\Vert\beta(\varphi)\Vert_{H}+\Vert\pi(\varphi)\Vert_{H} +\Vert\mu\Vert_{H}+\Vert\beta_{\Gamma}(\psi)\Vert_{H_{\Gamma}}+\Vert\pi_{\Gamma}(\psi)\Vert_{H_{\Gamma}}+\Vert\theta\Vert_{H_{\Gamma}}+\Vert\boldsymbol{\varphi}\Vert_{\mathcal{L}^{2}}\Big),\notag
\end{align}
almost everywhere on $[\eta,\infty)$. In view of \eqref{4.13}, \eqref{eq4.4}, \eqref{4.35}, and taking $\gamma$ sufficiently small in \eqref{4.35}, we find
\begin{align}
\Vert\boldsymbol{\varphi}\Vert_{\mathcal{V}^{2}}\leq \frac{1}{2}\Vert\boldsymbol{\varphi}\Vert_{\mathcal{V}^{2}}+C,\quad\text{for a.a. }s\geq\eta.\notag
\end{align}
This together with \eqref{4.13}, \eqref{eq4.4}, \eqref{4.35} yields the conclusion \eqref{4.29}.
\end{proof}
\begin{proof}[\textbf{Proof of Theorem \ref{eventual}}]
Collecting the results in Lemmas \ref{regh1}--\ref{regh3} and Remark \ref{regh1r}, we obtain the regularity properties of the weak solution $(\bm{\varphi}, \bm{\mu})$ presented in Theorem \ref{eventual}. The proof is complete.
\end{proof}
\begin{remark}\label{conti-b}\rm
From Lemmas \ref{time-regu} and \ref{regh3}, we find that $\bm{\varphi}\in C((0,\infty);\mathcal{H}^{2r})$ for any $r\in (3/4,1)$. Thanks to the Sobolev embedding theorem, for any given $\eta>0$, it holds $\varphi\in C(\overline{\Omega}\times [\eta,\infty))$ and $\psi=\varphi|_\Gamma \in C(\Gamma\times [\eta,\infty))$,  moreover,
\begin{align}
\|\varphi(t)\|_{C(\overline{\Omega})}\leq 1,\quad \|\psi(t)\|_{C(\Gamma)}\leq 1,\quad \forall\, t\geq \eta.
\label{linf1}
\end{align}
\end{remark}

\section{Separation properties of global weak solutions}
\setcounter{equation}{0}
The aim of this section is two-fold. We first prove Theorem \ref{intantaneous} on the instantaneous separation property in two dimensions, under the additional assumption $\mathbf{(A5)}$. Afterwards, we go back to Theorem \ref{eventual-2} on the eventual separation property that is valid in both two and three dimensions without $\mathbf{(A5)}$.

\subsection{Instantaneous separation in two dimensions}
Below we provide an unified approach to show the instantaneous separation property of $\bm{\varphi}$ in two dimensions, under different combination of parameters $(L,\sigma)\in[0,\infty)\times[0,\infty)$ as described in \eqref{choice}.  The proof is based on a suitable De Giorgi's iteration scheme for the balance equations of chemical potentials $\mu$ and $\theta$, that is, \eqref{eq3.5} and \eqref{eq3.6}. It extends the recent work \cite{GP} for the single Cahn-Hilliard equation subject to homogeneous Neumann boundary conditions for $\varphi$ and $\mu$. Here, we are able to obtain the separation property under possible bulk/boundary interactions for phase functions as well as chemical potentials.

\begin{proof}[\textbf{Proof of Theorem \ref{intantaneous}}]
Let $\eta>0$ be arbitrary but fixed. Thanks to Remark \ref{conti-b}, $\bm{\varphi}$ is well defined for all $t\geq \eta$ and satisfying \eqref{linf1}.
For $\delta\in(0,1)$, whose value will be chosen later, we introduce the sequence
\begin{align}
k_{n}=1-\delta-\frac{\delta}{2^{n}},\quad\forall\; n\in \mathbb{N},\notag
\end{align}
such that
\begin{align}
1-2\delta<k_{n}<k_{n+1}<1-\delta,\quad\forall\, n\geq 1,
\quad\quad k_{n}\rightarrow1-\delta\ \ \mathrm{as}\ \ n\rightarrow\infty.\notag
\end{align}
For any $n\in \mathbb{N}$, we set
\begin{align}
\varphi_{n}(x,t)=(\varphi(x,t)-k_{n})^{+},\quad\psi_{n}(x,t)=(\psi(x,t)-k_{n})^{+}.\notag
\end{align}
It is obvious that $0\leq\varphi_{n},\psi_{n}\leq 2\delta$ (cf. \cite{Po}).
Next, define
\begin{align}
A_{n}(t)=\big\{x\in\Omega:\;\varphi(x,t)-k_{n}\geq0\big\},\quad B_{n}(t)=\big\{x\in\Gamma:\;\psi(x,t)-k_{n}\geq 0\big\},\quad\forall\,t\in[\eta,\infty).\notag
\end{align}
By definition, we find
\begin{align*}
A_{n+1}(t)\subset A_{n}(t),\quad B_{n+1}(t)\subset B_{n}(t),
\qquad \forall \,n\in \mathbb{N},\quad\forall\,t\in[\eta,\infty).
\end{align*}
Consider
\begin{align*}
z_{n}(t)=\int_{A_{n}(t)}1\,\mathrm{d}x+\int_{B_{n}(t)}1\,\mathrm{d}S\geq 0,\quad\forall\, n\in \mathbb{N},\quad
\forall\,t\in[\eta,\infty).
\end{align*}
For convenience, hereafter let us consider the estimates at an arbitrary but fixed time $t\in[\eta,\infty)$ such that
\begin{align}
&\Vert\boldsymbol{\mu}(t)\Vert_{\mathcal{H}_{L,\sigma}^{1}}\leq \Vert\boldsymbol{\mu}\Vert_{L^{\infty}(\eta,\infty;\mathcal{H}_{L,\sigma}^{1})}\leq M_3,
\label{4.13b}
\\
&\Vert\beta(\varphi(t))\Vert_{L^{p}(\Omega)} + \Vert\beta(\psi(t))\Vert_{L^{p}(\Gamma)}
\leq \Vert\beta(\varphi)\Vert_{L^{\infty}(\eta,\infty;L^{p}(\Omega))} +\Vert\beta(\psi)\Vert_{L^{\infty}(\eta,\infty;L^{p}(\Gamma))}
\leq\widetilde{C}_{0}\sqrt{p},
\label{eq4.4b}
\end{align}
see \eqref{4.13} and \eqref{eq4.4}.
For any $n\in \mathbb{N}$, multiplying \eqref{eq3.5} by $\varphi_{n}$ and integrating over $\Omega$, multiplying \eqref{eq3.6} by $\psi_{n}$ and integrating over $\Gamma$, using integration by parts and then adding the resultants together, we obtain
\begin{align}
&\Vert\nabla\varphi_{n}\Vert_{H}^{2}+\Vert\nabla_{\Gamma}\psi_{n}\Vert_{H_{\Gamma}}^{2}+\underbrace{\int_{\Omega}\beta(\varphi)\varphi_{n}\,\mathrm{d}x+\int_{\Gamma}\beta_{\Gamma}(\psi)\psi_{n}\,\mathrm{d}S}_{J_{1}}\notag\\
&\quad=\underbrace{\int_{\Omega}\mu\varphi_{n}\,\mathrm{d}x+\int_{\Gamma}\theta\psi_{n}\,\mathrm{d}S}_{J_{2}} +\underbrace{\left(-\int_{\Omega}\pi(\varphi)\varphi_{n}\,\mathrm{d}x -\int_{\Gamma}\pi_{\Gamma}(\psi)\psi_{n}\,\mathrm{d}S\right)}_{J_{3}}.
\label{de4.31}
\end{align}
In the derivation of \eqref{de4.31}, we have used the following identities (cf. \cite{GP}):
\begin{align}
\int_{A_{n}(t)}\nabla\varphi\cdot\nabla\varphi_{n}\,\mathrm{d}x
=\Vert\nabla\varphi_{n}\Vert_{H}^{2},\qquad
\int_{B_{n}(t)}\nabla_{\Gamma}\psi\cdot\nabla_{\Gamma}\psi_{n}\,\mathrm{d}S =\Vert\nabla_{\Gamma}\psi_{n}\Vert_{H_{\Gamma}}^{2}.\notag
\end{align}
In what follows, we estimate the terms $J_1$--$J_3$.

\noindent \textbf{The term $J_{1}$}: As in \cite{GP}, for any $x\in A_{n}(t)$, it holds
\begin{align}
\beta\big(\varphi(x,t)\big)=\beta(k_{n})+ \int_0^1\beta'\big(s\varphi(x,t)+(1-s)k_n\big)\big(\varphi(x,t)-k_{n}\big)\,\mathrm{d}s.
\notag
\end{align}
From the assumption $\mathbf{(A1)}$ and the fact $k_{n}>1-2\delta$ we find
\begin{align}
\int_{\Omega}\beta(\varphi)\varphi_{n}\,\mathrm{d}x&=\int_{A_{n}(t)}\beta(\varphi)\varphi_{n}\,\mathrm{d}x\notag\\
&\geq\beta(k_{n})\int_{A_{n}(t)}\varphi_{n}\,\mathrm{d}x +\varpi\int_{A_{n}(t)}\varphi_{n}^{2}\,\mathrm{d}x\notag\\
&\geq\beta(1-2\delta)\int_{\Omega}\varphi_{n}\,\mathrm{d}x+\varpi\int_{\Omega}\varphi_{n}^{2}\,\mathrm{d}x.\label{de4.34}
\end{align}
In a similar manner, on the boundary it holds
\begin{align}
\int_{\Gamma}\beta_{\Gamma}(\psi)\psi_{n}\,\mathrm{d}S\geq\beta_{\Gamma}(1-2\delta)\int_{\Gamma}\psi_{n}\,\mathrm{d}S+\varpi\int_{\Gamma}\psi_{n}^{2}\,\mathrm{d}S.\label{de4.35}
\end{align}
\textbf{The term $J_{2}$}: By \eqref{4.13b}, H\"older's inequality, Agmon's inequality, Lemma \ref{critical} and the Sobolev embedding theorem, we obtain
\begin{align}
&\int_{\Omega}\mu\varphi_{n}\,\mathrm{d}x +\int_{\Gamma}\theta\psi_{n}\,\mathrm{d}S\notag\\
&\quad=\int_{A_{n}(t)}\mu\varphi_{n}\,\mathrm{d}x
+\int_{B_{n}(t)}\theta\psi_{n}\,\mathrm{d}S\notag\\
&\quad\leq\Vert\varphi_{n}\Vert_{L^{\infty}(\Omega)} \Vert\mu\Vert_{L^{p}(\Omega)}\Big(\int_{A_{n}(t)}1\,\mathrm{d}x\Big)^{1-\frac{1}{p}} +\Vert\psi_{n}\Vert_{L^{\infty}(\Gamma)}\Vert\theta\Vert_{L^{p}(\Gamma)}\Big(\int_{B_{n}(t)}1\,\mathrm{d}S\Big)^{1-\frac{1}{p}}\notag\\
&\quad\leq \left\{
\begin{array}{ll}
C\Big(\sqrt{p}\Vert\varphi_{n}\Vert_{L^{\infty}(\Omega)}\Vert\mu\Vert_{V} +|\Gamma|^{\frac{1}{p}}\Vert\psi_{n}\Vert_{L^{\infty}(\Gamma)}\Vert\theta\Vert_{V_{\Gamma}}\Big)z_{n}^{1-\frac{1}{p}}, &\quad \text{if }\sigma>0,\\
C\Big(\sqrt{p}\Vert\varphi_{n}\Vert_{L^{\infty}(\Omega)}\Vert\mu\Vert_{V} +\sqrt{p}\Vert\psi_{n}\Vert_{L^{\infty}(\Gamma)}\Vert\theta\Vert_{H^{\frac{1}{2}}(\Gamma)}\Big)z_{n}^{1-\frac{1}{p}}, &\quad \text{if }\sigma=0,
\end{array}\right.\notag\\
&\quad\leq C(\eta)\delta\sqrt{p}z_{n}^{1-\frac{1}{p}},\quad\forall\, p\geq2.\notag
\end{align}
\textbf{The term $J_{3}$}: Set $$\pi_{\infty}=\max_{s\in[-1,1]} |\pi(s)| ,\quad\pi_{\Gamma,\infty}=\max_{s\in[-1,1]}  |\pi_{\Gamma}(s)|.$$
Then it holds
\begin{align}
-\int_{\Omega}\pi(\varphi)\varphi_{n}\,\mathrm{d}x-\int_{\Gamma}\pi_{\Gamma}(\psi)\psi_{n}\,\mathrm{d}S\leq\pi_{\infty}\int_{\Omega}\varphi_{n}\,\mathrm{d}x+\pi_{\Gamma,\infty}\int_{\Gamma}\psi_{n}\,\mathrm{d}S.\label{de4.37}
\end{align}
Collecting the above estimates, we infer from \eqref{de4.31} that
\begin{align}
&\Vert\nabla\varphi_{n}\Vert_{H}^{2}+\Vert\nabla_{\Gamma}\psi_{n}\Vert_{H_{\Gamma}}^{2}+\big(\beta(1-2\delta)-\pi_{\infty}\big)\int_{\Omega}\varphi_{n}\,\mathrm{d}x+\varpi\int_{\Omega}\varphi_{n}^{2}\,\mathrm{d}x\notag\\
&\qquad+\big(\beta_{\Gamma}(1-2\delta)-\pi_{\Gamma,\infty}\big)\int_{\Gamma}\psi_{n}\,\mathrm{d}S+\varpi\int_{\Gamma}\psi_{n}^{2}\,\mathrm{d}S\notag\\
&\quad\leq C(\eta)\delta\sqrt{p}z_{n}^{1-\frac{1}{p}},\quad\forall\, p\geq2.
\label{de4.38}
\end{align}
Thanks to $\mathbf{(A1)}$, we have $$\beta(1-2\delta)-\pi_{\infty}>0,\quad\beta_{\Gamma}(1-2\delta)-\pi_{\Gamma,\infty}>0,$$
provided that $\delta\in (0,1)$ is sufficiently small.
Moreover, for any $x\in A_{n+1}(t)$, the following observation has been made in \cite{GP}:
\begin{align}
\varphi_{n}(x,t)&=\varphi(x,t)-k_n =\varphi_{n+1}(x,t)+\delta\Big(\frac{1}{2^{n}}-\frac{1}{2^{n+1}}\Big)\geq\frac{\delta}{2^{n+1}}.\label{de4.39}
\end{align}
Similarly, for any $x\in B_{n+1}(t)$, it holds
\begin{align}
\psi_{n}(x,t)\geq\frac{\delta}{2^{n+1}}.\label{de4.40}
\end{align}
From \eqref{de4.39} and \eqref{de4.40}, we see that
\begin{align}
&\int_{\Omega}|\varphi_{n}|^{3}\,\mathrm{d}x\geq\int_{A_{n+1}(t)}|\varphi_{n}|^{3}\,\mathrm{d}x\geq\Big(\frac{\delta}{2^{n+1}}\Big)^{3}\int_{A_{n+1}(t)}1\,\mathrm{d}x,\notag\\
&\int_{\Gamma}|\psi_{n}|^{3}\,\mathrm{d}S\geq\int_{B_{n+1}(t)}|\psi_{n}|^{3}\,\mathrm{d}S\geq\Big(\frac{\delta}{2^{n+1}}\Big)^{3}\int_{B_{n+1}(t)}1\,\mathrm{d}S,\notag
\end{align}
which imply
\begin{align}
\int_{\Omega}|\varphi_{n}|^{3}\,\mathrm{d}x+\int_{\Gamma}|\psi_{n}|^{3}\,\mathrm{d}S\geq\Big(\frac{\delta}{2^{n+1}}\Big)^{3}z_{n+1}.\label{de4.41}
\end{align}
On the other hand, by H\"older's inequality, the Sobolev embedding theorem and \eqref{de4.38}, we obtain
\begin{align}
\int_{\Omega}|\varphi_{n}|^{3}\,\mathrm{d}x+\int_{\Gamma}|\psi_{n}|^{3}\,\mathrm{d}S&=\int_{A_{n}(t)}|\varphi_{n}|^{3}\,\mathrm{d}x+\int_{B_{n}(t)}|\psi_{n}|^{3}\,\mathrm{d}S\notag\\
&\leq\Big(\int_{\Omega}|\varphi_{n}|^{4}\,\mathrm{d}x\Big)^{\frac{3}{4}}\Big(\int_{A_{n}(t)}1\,\mathrm{d}x\Big)^{\frac{1}{4}}+\Big(\int_{\Gamma}|\psi_{n}|^{4}\,\mathrm{d}S\Big)^{\frac{3}{4}}\Big(\int_{B_{n}(t)}1\,\mathrm{d}S\Big)^{\frac{1}{4}}\notag\\
&\leq C\Big(\Vert\varphi_{n}\Vert_{V}^{3}+\Vert\psi_{n}\Vert_{V_{\Gamma}}^{3}\Big)z_{n}^{\frac{1}{4}}\notag\\
&\leq C(\eta)\delta^{\frac{3}{2}}p^{\frac{3}{4}}z_{n}^{\frac{7}{4}-\frac{3}{2p}}.\label{de4.42}
\end{align}
 Hence, it follows from  \eqref{de4.41} and \eqref{de4.42} that
\begin{align}
z_{n+1}\leq C(\eta)2^{3n+3}\delta^{-\frac{3}{2}}p^{\frac{3}{4}}z_{n}^{\frac{7}{4}-\frac{3}{2p}}.\notag
\end{align}
Applying Lemma \ref{itera} with the choices
$$b=2^{3},\quad C=C(\eta)2^{3}\delta^{-\frac{3}{2}}p^{\frac{3}{4}}, \quad \epsilon=\frac{3}{4}-\frac{3}{2p}=\frac{3(p-2)}{4p},
$$
we can conclude that if
\begin{align}
z_{0}\leq C^{-\frac{4p}{3(p-2)}}b^{-\frac{16p^{2}}{9(p-2)^{2}}},\label{z0}
\end{align}
then $z_{n}\rightarrow 0$. The condition \eqref{z0} can be written as
\begin{align}
z_{0}\leq\underbrace{\frac{1}{2^{\frac{4p}{p-2} +\frac{16p^{2}}{3(p-2)^{2}}}C(\eta)^{\frac{4p}{3(p-2)}}}}_{=:K(\eta,p)} \left(\frac{\delta^{\frac{2p}{p-2}}}{p^{\frac{p}{p-2}}}\right).
\label{de4.44}
\end{align}
We note that the constant $K(\eta,p)$ can be independent of $p$ provided that $p>2$ is sufficiently large. \

To verify the condition \eqref{de4.44}, we can follow the same argument as in \cite{GP}. The details are presented below for the sake of completeness. Thanks to $\mathbf{(A1)}$ and the estimate \eqref{eq4.4b}, for sufficiently large $q>2$ and  sufficiently small $\delta\in (0,1)$, we find
\begin{align}
z_{0}&=\int_{A_{0}(t)}1\,\mathrm{d}x+\int_{B_{0}(t)}1\,\mathrm{d}S\notag\\
&\leq\int_{A_{0}(t)}\frac{|\beta(\varphi)|^{q}}{\beta(1-2\delta)^{q}}\,\mathrm{d}x+\int_{B_{0}(t)}\frac{|\beta(\psi)|^{q}}{\beta(1-2\delta)^{q}}\,\mathrm{d}S\notag\\
&\leq\frac{\int_{\Omega}|\beta(\varphi)|^{q}\,\mathrm{d}x}{\beta(1-2\delta)^{q}}+\frac{\int_{\Gamma}|\beta(\psi)|^{q}\,\mathrm{d}S}{\beta(1-2\delta)^{q}}\notag\\
&\leq\frac{\widetilde{C}_{0}(\eta)^{q}(\sqrt{q})^{q}}{\beta(1-2\delta)^{q}}.\notag
\end{align}
Now the additional assumption $\mathbf{(A5)}$ plays its role, namely, there exists $C_{\beta}>0$ such that
\begin{align*}
\frac{1}{\beta(1-2\delta)}\leq \frac{C_{\beta}}{|\ln \delta |^{\kappa}}\quad \text{for $\delta$ sufficiently small}.
\end{align*}
Taking $\delta=e^{-q}$ for $q>2$ sufficiently large, we get
\begin{align}
\frac{\widetilde{C}_{0}(\eta)^{q}(\sqrt{q})^{q}}{\beta(1-2\delta)^{q}} \leq\frac{\widetilde{C}_{0}(\eta)^{q}C_{\beta}^{q}(\sqrt{q})^{q}}{|\ln \delta|^{\kappa q}}=\frac{\widetilde{C}_{0}(\eta)^{q}C_{\beta}^{q}}{q^{(\kappa-1/2) q}}.\notag
\end{align}
Thus, the condition \eqref{de4.44} can be ensured if
\begin{align*}
\frac{\widetilde{C}_{0}(\eta)^{q}C_{\beta}^{q}}{q^{(\kappa-1/2) q}}\leq K(\eta,p)\frac{e^{-\frac{2qp}{p-2}}}{p^{\frac{p}{p-2}}},
\end{align*}
that is,
\begin{align}
\frac{p^{\frac{p}{p-2}}}{K(\eta,p)}\leq\widetilde{C}_{0}(\eta)^{-q} C_{\beta}^{-q}e^{-\frac{2qp}{p-2}}q^{(\kappa-1/2) q}.
\label{de4.46}
\end{align}
Fix now $p>2$. Since $\kappa>1/2$, we find
\begin{align*}
\widetilde{C}_{0}(\eta)^{-q} C_{\beta}^{-q}e^{-\frac{2qp}{p-2}}q^{(\kappa-1/2) q}=e^{-\frac{2qp}{p-2}-q\ln(\widetilde{C}_{0}(\eta)C_{\beta})+(\kappa-1/2) q\ln q}
\rightarrow\infty,\quad \text{as}\ q\rightarrow \infty.
\end{align*}
Therefore, by choosing $q$ sufficiently large (and accordingly $\delta=e^{-q}$ being small), we can ensure \eqref{de4.46}, and thus \eqref{de4.44}. This yields $z_{n}\rightarrow0$ as $n\rightarrow\infty$ for any time $t\in [\eta,\infty)$ that fulfills \eqref{4.13b} and \eqref{eq4.4b}.
Recalling the definition of $z_n$, we have
\begin{align*}
z_{n}\rightarrow|\{x\in\Omega:\;\varphi(x,t)\geq1-\delta\}|+|\{x\in\Gamma:\;\psi(x,t)\geq1-\delta\}|, \quad
\text{as}\ n\to \infty.
\end{align*}
Hence, it holds
\begin{align*}
\Vert(\varphi(t)-(1-\delta))^{+}\Vert_{L^{\infty}(\Omega)} +\Vert(\psi(t)-(1-\delta))^{+}\Vert_{L^{\infty}(\Gamma)}=0.
\end{align*}
Thanks to the continuity of $\varphi$ and $\psi$ (see Remark \ref{conti-b}), the above identity holds for all $t\geq \eta$. Besides, the inequality \eqref{de4.46} implies that $\delta$ only depends on $\eta$ but not on the specific time. Repeating exactly the same argument, we can find certain $\widetilde{\delta}\in (0,1)$ such that
\begin{align*}
\Vert(\varphi(t)-(-1+\widetilde{\delta}))^{-}\Vert_{L^{\infty}(\Omega)} +\Vert(\psi(t)-(-1+\widetilde{\delta})\big)^{-}\Vert_{L^{\infty}(\Gamma)}=0,\quad \forall\,  t\geq \eta.
\end{align*}
Taking $\delta_2=\min\{\delta,\widetilde{\delta}\}$, we arrive at the conclusion \eqref{4.31}. The proof of Theorem \ref{intantaneous} is complete.
\end{proof}

\subsection{The eventual separation property}
For any given $a\in(-1,1)$, we introduce the phase space
\begin{align*}
\mathcal{Z}_{a}=\Big\{\boldsymbol{\varphi}=(\varphi,\psi)\in \mathcal{V}^{1}:\overline{m}(\boldsymbol{\varphi})=a,E(\boldsymbol{\varphi})<\infty\Big\}.
\end{align*}
The metric $\mathrm{d}_{\mathcal{Z}_{a}}\big(\cdot,\cdot\big)$ is defined as follows:
\begin{align}
\mathrm{d}_{\mathcal{Z}_{a}}\big(\boldsymbol{\varphi}_{1},\boldsymbol{\varphi}_{2}\big)
&:=\Vert\boldsymbol{\varphi}_{1}-\boldsymbol{\varphi}_{2} \Vert_{\mathcal{V}^{1}}+\left|\int_{\Omega}\widehat{\beta}(\varphi_{1})\,\mathrm{d}x-\int_{\Omega}\widehat{\beta}(\varphi_{2})\,\mathrm{d}x\right|^{\frac{1}{2}}\notag\\
&\quad\ +\left|\int_{\Gamma}\widehat{\beta}_{\Gamma}(\psi_{1})\,\mathrm{d}S-\int_{\Gamma}\widehat{\beta}_{\Gamma}(\psi_{2})\,\mathrm{d}S\right|^{\frac{1}{2}},\quad\forall\,\boldsymbol{\varphi}_{1},\boldsymbol{\varphi}_{2}\in \mathcal{Z}_{a},\notag
\end{align}
and $\big(\mathcal{Z}_{a},\mathrm{d}_{\mathcal{Z}_{a}}\big(\cdot,\cdot\big)\big)$ is thus a completed metric space. Then we have the following conclusion, which is a straightforward extension of \cite[Proposition 4.1]{FW}:
\begin{proposition}
Assume that the assumptions in Proposition \ref{existence} are satisfied. Problem \eqref{model} defines a strongly continuous semigroup $\mathcal{S}(t):\mathcal{Z}_{\overline{m}_{0}}\rightarrow \mathcal{Z}_{\overline{m}_{0}}$ such that
\begin{align*}
\mathcal{S}(t)\boldsymbol{\varphi}_{0}=\boldsymbol{\varphi}(t),\quad \forall\, t\geq 0,
\end{align*}
where $\boldsymbol{\varphi}(t)$ is the unique global weak solution subject to the initial datum $\boldsymbol{\varphi}_{0}\in \mathcal{Z}_{\overline{m}_{0}}$.
Moreover, $\mathcal{S}(t)\in C(\mathcal{Z}_{\overline{m}_{0}}, \mathcal{Z}_{\overline{m}_{0}})$ for all $t\geq 0$.
\end{proposition}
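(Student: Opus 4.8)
The plan is to establish, in this order, the four assertions implicit in the statement: that $\mathcal{S}(t)$ is well defined on $\mathcal{Z}_{\overline{m}_0}$ and maps it into itself; that $\mathcal{S}(0)=\mathrm{Id}$ and $\mathcal{S}(t+s)=\mathcal{S}(t)\mathcal{S}(s)$; that $t\mapsto\mathcal{S}(t)\boldsymbol{\varphi}_0$ is continuous into $(\mathcal{Z}_{\overline{m}_0},\mathrm{d}_{\mathcal{Z}_{\overline{m}_0}})$; and that $\mathcal{S}(t)$ is continuous on $\mathcal{Z}_{\overline{m}_0}$ for every fixed $t\geq0$. The first two are essentially bookkeeping. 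Since membership $\boldsymbol{\varphi}_0\in\mathcal{Z}_{\overline{m}_0}$ is exactly assumption $\mathbf{(A4)}$, Proposition~\ref{existence} provides the unique global weak solution $\boldsymbol{\varphi}$, so we set $\mathcal{S}(t)\boldsymbol{\varphi}_0:=\boldsymbol{\varphi}(t)$; invariance of $\mathcal{Z}_{\overline{m}_0}$ follows because Remark~\ref{conti} (together with the remark after Proposition~\ref{energyequality}) gives $\boldsymbol{\varphi}(t)\in\mathcal{V}^1$ for all $t$, the energy equality \eqref{Energyequ} gives $E(\boldsymbol{\varphi}(t))\leq E(\boldsymbol{\varphi}_0)<\infty$ for all $t$ (whence $\widehat{\beta}(\varphi(t))\in L^1(\Omega)$, $\widehat{\beta}_\Gamma(\psi(t))\in L^1(\Gamma)$), and \eqref{Mass-conser} gives $\overline{m}(\boldsymbol{\varphi}(t))=\overline{m}_0$. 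For the cocycle identity we would use autonomy of the system: the weak formulation \eqref{defn2.1}, \eqref{eq3.5}--\eqref{eq3.6} carries no explicit time dependence, so for fixed $s\geq0$ the shift $\tau\mapsto\boldsymbol{\varphi}(\tau+s)$ is again a global weak solution, now with initial datum $\boldsymbol{\varphi}(s)\in\mathcal{Z}_{\overline{m}_0}$, and uniqueness identifies it with $\tau\mapsto\mathcal{S}(\tau)\mathcal{S}(s)\boldsymbol{\varphi}_0$; while $\mathcal{S}(0)\boldsymbol{\varphi}_0=\boldsymbol{\varphi}(0)=\boldsymbol{\varphi}_0$ is immediate.

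For strong continuity in $t$ the key inputs are $\boldsymbol{\varphi}\in C([0,\infty);\mathcal{V}^1)$ (the remark after Proposition~\ref{energyequality}) and the continuity of $t\mapsto E(\boldsymbol{\varphi}(t))$ coming from \eqref{4.11}. Given $t_n\to t$, the $\mathcal{V}^1$-term in $\mathrm{d}_{\mathcal{Z}_{\overline{m}_0}}$ tends to zero directly. For the two $\widehat{\beta}$-terms we would argue that the Dirichlet-energy contributions to $E$ converge by $\mathcal{V}^1$-convergence and the $\widehat{\pi}$-contributions converge by dominated convergence (using $|\varphi|,|\psi|\leq1$ and $\widehat{\pi},\widehat{\pi}_\Gamma\in C^1$), so that $\int_\Omega\widehat{\beta}(\varphi(t_n))\,\mathrm{d}x+\int_\Gamma\widehat{\beta}_\Gamma(\psi(t_n))\,\mathrm{d}S$ converges to the same quantity at $t$; coupling this with the two separate Fatou lower bounds (valid since $\widehat{\beta},\widehat{\beta}_\Gamma\geq0$ under $\mathbf{(A1)}$, with a.e.\ convergence obtained along a subsequence) forces each $\widehat{\beta}$-integral to converge, and a subsequence--subsequence argument removes the extraction.

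The hard part is continuity of $\mathcal{S}(t)$ in the initial datum, because the only stability estimate at our disposal, from Proposition~\ref{existence}, controls $\boldsymbol{\varphi}_k(t)-\boldsymbol{\varphi}(t)$ merely in $\mathcal{H}_{L,\sigma,0}^{-1}$ and in $L^2(0,t;\mathcal{V}_{(0)}^1)$, which is strictly weaker than the norms entering $\mathrm{d}_{\mathcal{Z}_{\overline{m}_0}}$; the plan is to bootstrap it through the energy equality \eqref{Energyequ}. If $\boldsymbol{\varphi}_{0,k}\to\boldsymbol{\varphi}_0$ in $\mathcal{Z}_{\overline{m}_0}$, the definition of the metric gives $E(\boldsymbol{\varphi}_{0,k})\to E(\boldsymbol{\varphi}_0)$, so the solutions $\boldsymbol{\varphi}_k$ are uniformly bounded in $L^\infty(0,T;\mathcal{V}^1)\cap H^1(0,T;\mathcal{H}_{L,\sigma,0}^{-1})$; combined with the above stability estimate this yields $\boldsymbol{\varphi}_k(t)\rightharpoonup\boldsymbol{\varphi}(t)$ in $\mathcal{V}^1$ and $\boldsymbol{\varphi}_k(t)\to\boldsymbol{\varphi}(t)$ in $\mathcal{L}^2$ (by compactness of $\mathcal{V}^1\hookrightarrow\mathcal{L}^2$). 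Writing \eqref{Energyequ} for $\boldsymbol{\varphi}_k$ and for $\boldsymbol{\varphi}$ and invoking weak lower semicontinuity of $\boldsymbol{\varphi}\mapsto\int_0^t\|\boldsymbol{\varphi}'\|_{\mathcal{H}_{L,\sigma,0}^{-1}}^2$ gives $\limsup_k E(\boldsymbol{\varphi}_k(t))\leq E(\boldsymbol{\varphi}(t))$, while lower semicontinuity of $E$ along the convergence above gives $\liminf_k E(\boldsymbol{\varphi}_k(t))\geq E(\boldsymbol{\varphi}(t))$, hence $E(\boldsymbol{\varphi}_k(t))\to E(\boldsymbol{\varphi}(t))$. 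Then we decompose $E$ into its six summands --- two Dirichlet energies, two $\widehat{\beta}$-integrals, two $\widehat{\pi}$-integrals: the first four are (weakly) lower semicontinuous along this convergence and the last two converge by dominated convergence, so, the total converging to the sum of the limits, each summand must converge separately. In particular the Dirichlet seminorms converge, which with the weak convergence upgrades $\boldsymbol{\varphi}_k(t)$ to strong convergence in $\mathcal{V}^1$, and the two $\widehat{\beta}$-integrals converge --- precisely the remaining pieces of $\mathrm{d}_{\mathcal{Z}_{\overline{m}_0}}$; a final subsequence--subsequence argument removes the extraction. We expect this last ``each nonnegative/convex summand converges separately'' deduction --- the elementary fact that $a_k=b_k+c_k\to b+c$ with $\liminf b_k\geq b$, $\liminf c_k\geq c$ forces $b_k\to b$, $c_k\to c$ --- to be the crucial mechanism making the coarse $\mathcal{H}^{-1}$-level estimate sufficient for continuity in the finer metric $\mathrm{d}_{\mathcal{Z}_{\overline{m}_0}}$. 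The whole argument is uniform over the parameter ranges \eqref{choice}, the choice of $(L,\sigma)$ entering only through the spaces $\mathcal{H}_{L,\sigma,0}^{-1}$ and the operator $\mathcal{A}_{L,\sigma}$, whose relevant properties were already recorded.
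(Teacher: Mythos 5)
The paper itself offers no proof of this proposition --- it is presented as a ``straightforward extension of \cite[Proposition 4.1]{FW}'' --- so there is no in-paper argument to compare against; your write-up is a correct, self-contained reconstruction of the standard proof that the cited reference carries out. Every ingredient you invoke is already available in the paper (mass conservation \eqref{Mass-conser}, the energy equality \eqref{Energyequ}, the continuity $\boldsymbol{\varphi}\in C([0,\infty);\mathcal{V}^{1})$ recorded after Proposition \ref{energyequality}, and the $\mathcal{H}_{L,\sigma,0}^{-1}$ continuous-dependence estimate of Proposition \ref{existence}), and your key mechanism --- convergence of the total energy combined with separate lower semicontinuity of each nonnegative summand forces each summand to converge, which upgrades weak $\mathcal{V}^{1}$-convergence to strong convergence and simultaneously delivers the $\widehat{\beta}$-terms of the metric $\mathrm{d}_{\mathcal{Z}_{\overline{m}_{0}}}$ --- is exactly the right way to bootstrap the coarse $\mathcal{H}^{-1}$-level stability estimate up to continuity in the finer metric.
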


Define $\omega$-limit set
\begin{align*}
\omega(\boldsymbol{\varphi}_{0}):=\Big\{\boldsymbol{\varphi}_{\infty}\in \mathcal{H}^{2r}\cap \mathcal{Z}_{\overline{m}_{0}}:\,\exists \,t_{n}\nearrow\infty\text{ such that  }\boldsymbol{\varphi}(t_{n})\rightarrow\boldsymbol{\varphi}_{\infty}\text{ in }\mathcal{H}^{2r}\text{ as }n\rightarrow\infty\Big\}
\end{align*}
for $r\in(3/4,1)$. Since for any $\eta>0$, $\boldsymbol{\varphi}\in L^{\infty}(\eta,\infty;\mathcal{V}^{2})$, then the trajectory $\{\mathcal{S}(t)\boldsymbol{\varphi}_{0}\}_{t\geq\eta}$ is relatively compact in $\mathcal{H}^{2r}$. As a consequence, the set $\omega(\boldsymbol{\varphi}_{0})$ is nonempty, connected and compact in $\mathcal{H}^{2r}$. Moreover, since the free energy functional $E: \mathcal{Z}_{\overline{m}_{0}}\rightarrow\mathbb{R}$ serves as a strict Lyapunov function according to Proposition \ref{energyequality}, then every $\boldsymbol{\varphi}_{\infty}\in\omega(\boldsymbol{\varphi}_{0})$ is a stationary point of $\{\mathcal{S}(t)\}_{t\geq 0}$, that is, $\mathcal{S}(t)\boldsymbol{\varphi}_{\infty}=\boldsymbol{\varphi}_{\infty}$ for all $t\geq 0$. Hence, $(\boldsymbol{\varphi}_{\infty},\boldsymbol{\mu}_{\infty})$ can be regarded as the global weak solution of the following problem
\begin{align}
\left\{
\begin{array}{ll}
\partial_{t}\varphi_{\infty}=\Delta\mu_{\infty},&\text{in }Q,\\
\mu_{\infty}=-\Delta \varphi_{\infty}+\beta(\varphi_{\infty})+\pi(\varphi_{\infty}),&\text{in }Q,\\
\partial_{t}\psi_{\infty}=\sigma\Delta_{\Gamma}\theta_{\infty} -\partial_{\mathbf{n}}\mu_{\infty},&\text{on }\Sigma,\\
\theta_{\infty}=-\Delta_{\Gamma}\psi_{\infty}+\beta_{\Gamma}(\psi_{\infty})+\pi_{\Gamma}(\psi_{\infty}) +\partial_{\mathbf{n}}\varphi_{\infty},&\text{on }\Sigma,\\
\varphi_{\infty}|_{\Gamma}=\psi_{\infty},&\text{on }\Sigma,\\
L\partial_{\mathbf{n}}\mu_{\infty}=\theta_{\infty}-\mu_{\infty}|_{\Gamma},&\text{on }\Sigma,\\
(\varphi_{\infty},\psi_{\infty})|_{t=0}=(\varphi_{\infty},\psi_{\infty}),\quad\text{with }\varphi_{\infty}|_{\Gamma}=\psi_{\infty},&\text{in }\Omega\times\Gamma.
\end{array}\right.\notag
\end{align}
Thanks to the regularity of weak solutions for the evolutionary problem (recall Lemmas \ref{regh2}, \ref{regh3} and Remark \ref{regh1r}), we obtain
\begin{align*}
\boldsymbol{\varphi}_{\infty}\in\mathcal{V}^{2}, \quad\boldsymbol{\mu}_{\infty}\in\mathcal{V}_{\sigma}^{2}, \quad(\beta(\varphi_{\infty}),\beta_{\Gamma}(\psi_{\infty}))\in\mathcal{L}^{2}.
\end{align*}
Hence, $(\boldsymbol{\varphi}_{\infty},\boldsymbol{\mu}_{\infty})$ is a strong solution of the stationary problem
\begin{align}
\left\{
\begin{array}{ll}
\Delta\mu_{\infty}=0,&\text{in }\Omega,\\
\mu_{\infty}=-\Delta \varphi_{\infty}+\beta(\varphi_{\infty})+\pi(\varphi_{\infty}),&\text{in }\Omega,\\
\sigma\Delta_{\Gamma}\theta_{\infty}-\partial_{\mathbf{n}}\mu_{\infty}=0, &\text{on }\Gamma,\\
\theta_{\infty}=-\Delta_{\Gamma}\psi_{\infty}+\beta_{\Gamma}(\psi_{\infty}) +\pi_{\Gamma}(\psi_{\infty})+\partial_{\mathbf{n}}\varphi_{\infty},&\text{on }\Gamma,\\
\varphi_{\infty}|_{\Gamma}=\psi_{\infty},&\text{on }\Gamma,\\
L\partial_{\mathbf{n}}\mu_{\infty}=\theta_{\infty}-\mu_{\infty}|_{\Gamma},&\text{on }\Gamma.
\end{array}\right.\label{eq4.7}
\end{align}
Testing \eqref{eq4.7}$_{1}$ by $\mu_{\infty}$ and \eqref{eq4.7}$_{3}$ by $\theta_{\infty}$, respectively,
adding the resultants and using
\eqref{eq4.7}$_{6}$, we obtain
\begin{align}
\int_{\Omega}|\nabla\mu_{\infty}|^{2}\,\mathrm{d}x +\sigma\int_{\Gamma}|\nabla_{\Gamma}\theta_{\infty}|^{2}\,\mathrm{d}S +L\int_{\Gamma}|\partial_{\mathbf{n}}\mu_{\infty}|^{2}\,\mathrm{d}S=0.\notag
\end{align}
For parameters $(L,\sigma)$ satisfying \eqref{choice}, we can conclude that $\mu_{\infty}=\theta_{\infty}$ are constants in all cases. Furthermore, integrating \eqref{eq4.7}$_{2}$ in $\Omega$, and \eqref{eq4.7}$_{4}$ on $\Gamma$, we get
\begin{align}
\mu_{\infty}=\theta_{\infty}=\frac{1}{|\Omega|+|\Gamma|} \Big[\int_{\Omega}\big(\beta(\varphi_{\infty})+\pi(\varphi_{\infty})\big)\,\mathrm{d}x +\int_{\Gamma}\big(\beta_{\Gamma}(\psi_{\infty})+\pi_{\Gamma}(\psi_{\infty})\big)\,\mathrm{d}S\Big].
\label{eq4.10}
\end{align}
Thus, the stationary problem \eqref{eq4.7} reduces to
\begin{align}
\left\{
\begin{array}{ll}
\mu_{\infty}=-\Delta \varphi_{\infty}+\beta(\varphi_{\infty})+\pi(\varphi_{\infty}),&\text{ in }\Omega,\\
\theta_{\infty}=-\Delta_{\Gamma}\psi_{\infty}+\beta_{\Gamma}(\psi_{\infty}) +\pi_{\Gamma}(\psi_{\infty})+\partial_{\mathbf{n}}\varphi_{\infty},&\text{ on }\Gamma,\\
\varphi_{\infty}|_{\Gamma}=\psi_{\infty},&\text{ on }\Gamma,
\end{array}\right.\label{stationary}
\end{align}
with $\mu_{\infty}=\theta_{\infty}$ given by \eqref{eq4.10}. Finally, due to Corollary \ref{decay1}, we see that  $E(\bm{\varphi}_\infty)\equiv E_\infty$ on the $\omega$-limit set $\omega(\boldsymbol{\varphi}_{0})$.

\begin{remark}\label{sta-ob}\rm
For different combination of parameters $(L,\sigma)$ satisfying \eqref{choice}, the chemical potentials $\mu$, $\theta$ in the evolutionary problem present different relationships and regularity properties in different spaces. However, for all cases, the stationary problem satisfies the same form \eqref{stationary}.
\end{remark}

Below we present two approaches that yield the eventual separation of global weak solutions.

\begin{proof}[\textbf{Proof of Theorem \ref{eventual-2}: a dynamic approach}] The eventual separation property can obtained by a classical dynamical approach, see, e.g., \cite{AW,GMS09,FW}.
 In particular, Remark \ref{sta-ob} enables us to take advantage of known results on the stationary problem obtained in \cite{FW}.
Similar to \cite[Definition 4.1]{FW}, given $a\in(-1,1)$, we say $\boldsymbol{\varphi}_{\mathrm{s}}=(\varphi_{\mathrm{s}},\psi_{\mathrm{s}})$ is a steady state of problem \eqref{model}, if $\boldsymbol{\varphi}_{\mathrm{s}}\in \mathcal{Z}_{a}$, $(\beta(\varphi_{\mathrm{s}}),\beta_{\Gamma}(\psi_{\mathrm{s}}))\in \mathcal{L}^2$ and
\begin{align}
&\int_{\Omega}\nabla \varphi_{\mathrm{s}}\cdot\nabla z\,\mathrm{d}x+\int_{\Omega}\Big(\beta(\varphi_{\mathrm{s}}) +\pi(\varphi_{\mathrm{s}})-\mu_{\mathrm{s}}\Big)z\,\mathrm{d}x +\int_{\Gamma}\nabla_{\Gamma}\psi_{\mathrm{s}}\cdot\nabla_{\Gamma}z_{\Gamma}\,\mathrm{d}S\notag\\
&\quad+\int_{\Gamma}\Big(\beta_{\Gamma}(\psi_{\mathrm{s}}) +\pi_{\Gamma}(\psi_{\mathrm{s}})-\theta_{\mathrm{s}}\Big)z_{\Gamma}\,\mathrm{d}S=0, \quad\forall\,\boldsymbol{z}=(z,z_{\Gamma})\in\mathcal{V}^{1},\notag
\end{align}
with $\mu_{\mathrm{s}}=\theta_{\mathrm{s}}$ given as in \eqref{eq4.10}. Denote the set of all steady states by $\boldsymbol{\mathcal{S}}_{a}$. It has been shown in \cite[Lemma 4.1]{FW} that under the assumptions  $\mathbf{(A1)}$--$\mathbf{(A3)}$, for any given $a\in(-1,1)$, there exist uniform constants $M_{a}>0$ and $\delta_{a}\in(0,1)$ such that for every $\boldsymbol{\varphi}_{\mathrm{s}}=(\varphi_{\mathrm{s}},\psi_{\mathrm{s}})\in\boldsymbol{\mathcal{S}}_{a}$ and its associated constants $\mu_{\mathrm{s}}=\theta_{\mathrm{s}}$, it holds
\begin{align}
&-1+\delta_{a}\leq \varphi_{\mathrm{s}}\leq1-\delta_{a},\quad\text{in }\Omega,\notag\\
&-1+\delta_{a}\leq \psi_{\mathrm{s}}\leq1-\delta_{a},\quad\text{on }\Gamma,\notag\\
&|\mu_{\mathrm{s}}|=|\theta_{\mathrm{s}}|\leq M_{a}.\notag
\end{align}
Moreover, $\boldsymbol{\mathcal{S}}_{a}$ is a bounded set in $\mathcal{H}^{3}$.
Returning to our problem \eqref{model} with $a=\overline{m}_0$, we have already shown  $$\omega(\boldsymbol{\varphi}_{0})\subset\boldsymbol{\mathcal{S}}_{\overline{m}_{0}}.$$
Then it follows from the definition of $\omega(\boldsymbol{\varphi}_{0})$ that
\begin{align*}
\lim_{t\rightarrow\infty}\text{dist}\big(S(t)\boldsymbol{\varphi}_{0},\omega(\boldsymbol{\varphi}_{0})\big)=0\quad\text{in }\mathcal{H}^{2r},\quad r\in (3/4,1),
\end{align*}
where the distance is given by $\text{dist}\big(\boldsymbol{z},\omega(\boldsymbol{\varphi}_{0})\big)=\inf_{\boldsymbol{y}\in\omega(\boldsymbol{\varphi}_{0})}\Vert\boldsymbol{z}-\boldsymbol{y}\Vert_{\mathcal{H}^{2r}}$. Thanks to the Sobolev embedding theorem, we see that $\mathcal{H}^{2r}\hookrightarrow C(\overline{\Omega})\times C(\Gamma)$ for $r\in(3/4,1)$. Therefore, we can conclude \eqref{eq4.13} by taking $$\delta_{1}=\frac12\delta_{\overline{m}_{0}}\in (0,1).$$
This completes the proof of  Theorem \ref{eventual-2}.
\end{proof}

\begin{proof}[\textbf{Proof of Theorem \ref{eventual-2}: via De Giorgi's iteration}]
Inspired by the recent work \cite{GP}, we can give an alternative proof based on the dissipative nature of problem \eqref{model} and a suitable De Giorgi's iteration scheme.
Denote the different quotient of $f$ by
$\partial_{t}^{h}f(\cdot)=\big(f(\cdot+h)-f(\cdot)\big)/h$ with $h>0$.
Similar to the derivation of \eqref{diff1}, we obtain
\begin{align}
	\frac{\mathrm{d}}{\mathrm{d}t}\Vert\partial_{t}^{h}\boldsymbol{\varphi}(t)\Vert_{\mathcal{H}_{L,\sigma,0}^{-1}}^{2}
	+\Vert\partial_{t}^{h}\boldsymbol{\varphi}(t)\Vert_{\mathcal{V}_{(0)}^{1}}^{2}\leq C\Vert\partial_{t}^{h}\boldsymbol{\varphi}(t)\Vert_{\mathcal{H}_{L,\sigma,0}^{-1}}^{2}.
\label{quot3}
\end{align}
Recalling Corollary \ref{decay1}, for any $\widetilde{\epsilon}>0$, there exists some $T_{\widetilde{\epsilon}}\geq 1$, such that
\begin{align*}
	\int_{t}^{t+2}\Vert\boldsymbol{\varphi}'(s)\Vert_{\mathcal{H}_{L,\sigma,0}^{-1}}^{2}\,\mathrm{d}s
	<\widetilde{\epsilon},\quad\forall\,t\geq T_{\widetilde{\epsilon}}.
\end{align*}
Then for $h\in (0,1)$ sufficiently small, it holds
\begin{align}
\int_{t}^{t+1}\Vert\partial_{t}^{h}\boldsymbol{\varphi}(s)\Vert_{\mathcal{H}_{L,\sigma,0}^{-1}}^{2}\,\mathrm{d}s
	\leq C\int_{t}^{t+2}\Vert\boldsymbol{\varphi}'(s)\Vert_{\mathcal{H}_{L,\sigma,0}^{-1}}^{2}\, \mathrm{d}s<C\widetilde{\epsilon},
	\quad\forall\,t\geq T_{\widetilde{\epsilon}}.\label{quot5}
\end{align}
By \eqref{quot3}, \eqref{quot5} and the well-known uniform Gronwall inequality,
we infer that
\begin{align}
	\sup_{t\geq T_{\widetilde{\epsilon}}+1}\Vert\partial_{t}^{h}\boldsymbol{\varphi}(t)\Vert_{\mathcal{H}_{L,\sigma,0}^{-1}}^{2} \leq C\widetilde{\epsilon},\notag
\end{align}
for all $h\in(0,1)$ sufficiently small, where $C>0$ is independent of $h$. Passing to the limit as $h\to 0$, we get
\begin{align}
	\sup_{t\geq T_{\widetilde{\epsilon}}+1}\Vert\boldsymbol{\varphi}'(t)\Vert_{\mathcal{H}_{L,\sigma,0}^{-1}}^{2}
	\leq C\widetilde{\epsilon},\notag
\end{align}
which, combined with \eqref{defn2.1}, yields
 \begin{align}
 	\sup_{t\geq T_{\widetilde{\epsilon}}+1}\Vert\mathbf{P}\boldsymbol{\mu}(t)\Vert_{\mathcal{H}_{L,\sigma,0}^{1}}^{2}\leq C\widetilde{\epsilon}.\label{quot6}
 \end{align}
Let us now return to the De Giorgi's iteration scheme in the proof of Theorem \ref{intantaneous}.
Apply the decomposition as in \cite{GP}:
 \begin{align}
 	\int_{\Omega}\mu\varphi_{n}\,\mathrm{d}x+\int_{\Gamma}\theta\psi_{n}\,\mathrm{d}S &=\int_{\Omega}(\mu-\overline{m}(\boldsymbol{\mu}))\varphi_{n}\,\mathrm{d}x
 	+\int_{\Gamma}(\theta-\overline{m}(\boldsymbol{\mu}))\psi_{n}\,\mathrm{d}S\notag\\
 	&\quad+\overline{m}(\boldsymbol{\mu})\Big(\int_{\Omega}\varphi_{n}\,\mathrm{d}x
 	+\int_{\Gamma}\psi_{n}\,\mathrm{d}S\Big).
 \label{quot7}
 \end{align}
Recalling \eqref{de4.31}, \eqref{de4.34}, \eqref{de4.35}, \eqref{de4.37}, and the facts $0\leq\varphi_{n},\psi_{n}\leq2\delta$, we infer from \eqref{quot7}, the Sobolev embedding theorem and the generalized Poincar\'{e}'s inequalities (see Lemmas \ref{equivalent}, \ref{equivalentb}) that
\begin{align}
&\Vert\nabla\varphi_{n}\Vert_{H}^{2}+\Vert\nabla_{\Gamma}\psi_{n}\Vert_{H_{\Gamma}}^{2}
+\big(\beta(1-2\delta)-\pi_{\infty}-\overline{m}(\boldsymbol{\mu})\big)\int_{\Omega}\varphi_{n}\,\mathrm{d}x
+\varpi\int_{\Omega}\varphi_{n}^{2}\,\mathrm{d}x\notag\\
&\qquad+\big(\beta_{\Gamma}(1-2\delta)-\pi_{\Gamma,\infty}
-\overline{m}(\boldsymbol{\mu})\big)\int_{\Gamma}\psi_{n}\,\mathrm{d}S
+\varpi\int_{\Gamma}\psi_{n}^{2}\,\mathrm{d}S\notag\\
&\quad=\int_{\Omega}(\mu-\overline{m}(\boldsymbol{\mu}))\varphi_{n}\,\mathrm{d}x
+\int_{\Gamma}(\theta-\overline{m}(\boldsymbol{\mu}))\psi_{n}\,\mathrm{d}S\notag\\
&\quad\leq C\delta\Big(\Vert\mu-\overline{m}(\boldsymbol{\mu})\Vert_{L^{4}(\Omega)}
+\Vert\theta-\overline{m}(\boldsymbol{\mu})\Vert_{L^{4}(\Gamma)}\Big)z_{n}^{\frac{3}{4}}\notag\\
&\quad\leq C\delta\Vert\mathbf{P}\boldsymbol{\mu}\Vert_{\mathcal{H}_{L,\sigma,0}^{1}}z_{n}^{\frac{3}{4}}.
\label{quot8}
\end{align}
We emphasize that the estimate \eqref{quot8} is valid for all cases in \eqref{choice}.
Thanks to Lemma \ref{regh1}, it holds $\overline{m}(\boldsymbol{\mu})\in L^{\infty}(1,\infty)$. Then
by $\mathbf{(A1)}$, we can take $\delta>0$ sufficiently small such that
\begin{align}
	\beta(1-2\delta)-\pi_{\infty}-\overline{m}(\boldsymbol{\mu}(t))>0,
	\quad\beta_{\Gamma}(1-2\delta)-\pi_{\Gamma,\infty}-\overline{m}(\boldsymbol{\mu}(t))>0,
	\quad\text{for a.a.}\ t\geq 1.\label{quot9}
\end{align}
Combining \eqref{quot6}, \eqref{quot8} and \eqref{quot9}, we find
\begin{align}
\Vert\nabla\varphi_{n}\Vert_{H}^{2}
+\Vert\nabla_{\Gamma}\psi_{n}\Vert_{H_{\Gamma}}^{2}
+\varpi\Big(\int_{\Omega}\varphi_{n}^{2}\,\mathrm{d}x
+\int_{\Gamma}\psi_{n}^{2}\,\mathrm{d}S\Big)
\leq C\widetilde{\epsilon}^{\frac{1}{2}}\delta z_{n}^{\frac{3}{4}},\quad\text{for a.a.}\ t\geq T_{\widetilde{\epsilon}}+1,\notag
\end{align}
which yields
$$
z_{n+1}\leq C2^{3n+3} \delta^{-\frac{3}{2}}\widetilde{\epsilon}^{\frac{3}{4}}z_{n}^{\frac{11}{8}}.
$$
Hence, for almost all $t\geq T_{\widetilde{\epsilon}}+1$, we can conclude that $z_{n}(t)\rightarrow0$ as $n\rightarrow\infty$,
provided that
 \begin{align}
 	z_{0}(t)\leq\frac{1}{2^\frac{88}{3}C^\frac{8}{3}}\left(\frac{\delta^{4}}{\widetilde{\epsilon}^2}\right).
 \label{quot10}
 \end{align}
In view of \eqref{quot10} and the fact $z_{0}(t)\leq |\Omega|+|\Gamma|$, for the previously chosen $\delta$, we can take $\widetilde{\epsilon}>0$ sufficiently small such that
 \begin{align*}
 |\Omega|+|\Gamma|\leq\frac{1}{2^\frac{88}{3}C^\frac{8}{3}}\left(\frac{\delta^{4}}{\widetilde{\epsilon}^2}\right).
 \end{align*}
Then for the corresponding time $T_{\widetilde{\epsilon}}$, \eqref{quot10} holds for almost all $t\geq T_{\widetilde{\epsilon}}+1$.
This gives
\begin{align*}
\Vert(\varphi(t)-(1-\delta))^{+}\Vert_{L^{\infty}(\Omega)} +\Vert(\psi(t)-(1-\delta))^{+}\Vert_{L^{\infty}(\Gamma)}=0,\quad \text{for a.a.}\ t\geq T_{\widetilde{\epsilon}}+1.
\end{align*}
The rest of the proof is similar to that for Theorem \ref{intantaneous}.
\end{proof}

\begin{remark}\rm
The smallness condition \eqref{quot6} plays a crucial role in the above proof. This property can be guaranteed provided that the free energy cannot ``drop to much'' during the evolution. In view of the energy equality \eqref{Energyequ}, it happens either for sufficiently large time (as studied here), or instantaneously if the initial datum is close to an absolute minimizer of the total free energy (this is the situation of ``small initial energy'' mentioned in \cite{GP} for the Cahn-Hilliard equation subject to homogeneous Neumann boundary conditions). Another interesting case is that the initial datum is close to a local minimizer of the free energy, which could be handled by the {\L}ojasiewicz-Simon approach (see \cite{LW}), under the additional assumption that the nonlinearities $\beta$, $\beta_\Gamma$, $\pi$ and $\pi_\Gamma$ are real analytic.
\end{remark}

\section{Long-time behavior}
\setcounter{equation}{0}
In this section, we prove Theorem \ref{equilibrium} on the long-time behavior of global weak solutions to problem \eqref{model}.

\begin{lemma}
Let $\boldsymbol{\varphi}$ be the unique global weak solution to problem \eqref{model} obtained in Proposition \ref{existence}. There exists a positive constant $M_{5}$ such that
\begin{align}
\Vert\boldsymbol{\varphi}\Vert_{L^{\infty}(T_{\mathrm{SP}},\infty;\mathcal{H}^{2+r})}\leq M_{5},\label{6.1}
\end{align}
where $r\in (0,1/2)$ and $T_{\mathrm{SP}}>0$ is determined as in Theorem \ref{eventual-2}.
\end{lemma}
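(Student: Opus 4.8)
The key new ingredient beyond Lemma \ref{regh3} is the eventual strict separation from Theorem \ref{eventual-2}: for all $t\ge T_{\mathrm{SP}}$ one has $\|\varphi(t)\|_{L^\infty(\Omega)}\le 1-\delta_1$ and $\|\psi(t)\|_{L^\infty(\Gamma)}\le 1-\delta_1$. Since $\beta,\beta_\Gamma\in C^1(-1,1)$ and $\pi,\pi_\Gamma$ are globally Lipschitz by $\mathbf{(A3)}$, there is a constant $C_{\delta_1}>0$ with
\begin{align*}
\sup_{|s|\le 1-\delta_1}\big(|\beta(s)|+|\beta'(s)|+|\beta_\Gamma(s)|+|\beta_\Gamma'(s)|\big)\le C_{\delta_1}.
\end{align*}
Combining this with $\boldsymbol{\varphi}\in L^\infty(T_{\mathrm{SP}},\infty;\mathcal{V}^2)$ (Theorem \ref{eventual}, taking the parameter there to be $\eta=T_{\mathrm{SP}}$) and the chain rule, $\nabla[\beta(\varphi)+\pi(\varphi)]=[\beta'(\varphi)+\pi'(\varphi)]\nabla\varphi$ and $\nabla_\Gamma[\beta_\Gamma(\psi)+\pi_\Gamma(\psi)]=[\beta_\Gamma'(\psi)+\pi_\Gamma'(\psi)]\nabla_\Gamma\psi$ show that $\beta(\varphi)+\pi(\varphi)\in L^\infty(T_{\mathrm{SP}},\infty;V)$ and $\beta_\Gamma(\psi)+\pi_\Gamma(\psi)\in L^\infty(T_{\mathrm{SP}},\infty;V_\Gamma)$, with norms bounded in terms of $M_4$, $C_{\delta_1}$, $K$ and $K_\Gamma$.

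Next I regard \eqref{eq3.5}, \eqref{eq3.6} together with $\varphi|_\Gamma=\psi$ (cf. \eqref{model}) as a linear elliptic bulk--surface transmission problem for $(\varphi,\psi)$ at a.a. fixed $t\ge T_{\mathrm{SP}}$, with bulk source $\mu-\beta(\varphi)-\pi(\varphi)$ and surface source $\theta-\beta_\Gamma(\psi)-\pi_\Gamma(\psi)$, and bootstrap by the (fractional-order) elliptic regularity theory for such systems, i.e. the counterpart of \cite[Corollary A.1]{MZ} used already in Lemma \ref{regh3}. By Theorem \ref{eventual}, $\mu\in L^\infty(T_{\mathrm{SP}},\infty;V)$; moreover $\theta\in L^\infty(T_{\mathrm{SP}},\infty;V_\Gamma)$ in the cases $\mathbf{(B1)}$, $\mathbf{(B2)}$ (where $\sigma>0$), whereas in case $\mathbf{(B3)}$ the trace theorem only gives $\theta=\mu|_\Gamma\in L^\infty(T_{\mathrm{SP}},\infty;H^{1/2}(\Gamma))$. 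Starting from $\boldsymbol{\varphi}\in\mathcal{V}^2$ and step one above, the bulk source lies in $V$ uniformly in time, so solving the bulk equation with Dirichlet datum $\psi\in H^2(\Gamma)$ yields $\varphi\in H^{5/2}(\Omega)$, hence $\partial_{\mathbf{n}}\varphi\in H^1(\Gamma)$; feeding this into the surface equation (whose source then lies in $H^1(\Gamma)$ for $\mathbf{(B1)}$, $\mathbf{(B2)}$ and in $H^{1/2}(\Gamma)$ for $\mathbf{(B3)}$, the latter being limited by $\theta$) gives $\psi\in H^3(\Gamma)$, resp.\ $\psi\in H^{5/2}(\Gamma)$; a final application of the bulk estimate, now limited only by $\mu\in V$, produces $\varphi\in H^3(\Omega)$. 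All of these estimates are linear and the data bounds are uniform on $[T_{\mathrm{SP}},\infty)$, so there is $M_5>0$ with $\|\varphi(t)\|_{H^3(\Omega)}+\|\psi(t)\|_{H^{5/2}(\Gamma)}\le M_5$ for a.a.\ $t\ge T_{\mathrm{SP}}$; since $H^3(\Omega)\hookrightarrow H^{2+r}(\Omega)$ and $H^{5/2}(\Gamma)\hookrightarrow H^{2+r}(\Gamma)$ for every $r\in(0,1/2)$, this gives \eqref{6.1}.

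I expect the main obstacle to be the precise formulation and justification of the elliptic estimate in the fractional scale $H^s$, $2<s\le 3$, for the coupled bulk--surface problem, in a form uniform with respect to $(L,\sigma)$ satisfying \eqref{choice}. In particular, case $\mathbf{(B3)}$ is sharp here: because $\theta$ inherits only $H^{1/2}(\Gamma)$-regularity as the trace of $\mu\in V$, the surface component $\psi$ cannot be pushed beyond $H^{5/2}(\Gamma)$, which is exactly why \eqref{6.1} is stated with $r<1/2$ rather than with $\mathcal{H}^3$. Two checks are routine: the overdetermined character of the system is harmless since $(\boldsymbol{\varphi},\boldsymbol{\mu})$ is already a strong solution of \eqref{model} on $(0,\infty)$ by Remark \ref{regh1r}, so the Dirichlet relation $\varphi|_\Gamma=\psi$ and the Neumann-type relation for $\partial_{\mathbf{n}}\varphi$ are compatible; and the iteration constant does not deteriorate in time, its only time-dependent inputs being the uniformly bounded norms of $\mu$, $\theta$ and of the now bounded $C^1$ nonlinearities.
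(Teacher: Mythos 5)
Your proposal is correct and follows essentially the same route as the paper: use the eventual separation to turn $\beta$, $\beta_\Gamma$ into bounded $C^1$ nonlinearities so that the bulk and surface sources are uniformly bounded in $V$ and in $V_\Gamma$ (resp. $H^{1/2}(\Gamma)$ when $\sigma=0$), and then invoke elliptic regularity for the bulk--surface system, which is precisely the content of the cited \cite[Corollary A.1]{GMS09}; your explicit Dirichlet/surface/Dirichlet bootstrap is just an unpacked version of that corollary, and your observation that the case $\mathbf{(B3)}$ limits $\psi$ to $H^{5/2}(\Gamma)$ correctly explains the restriction $r<1/2$.
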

\begin{proof}
Consider the elliptic problem for $\boldsymbol{\varphi}=(\varphi,\psi)$:
\begin{align}
&-\Delta \varphi(t)=\mu(t)-\beta\big(\varphi(t)\big)-\pi\big(\varphi(t)\big)=:h(t),&&\text{a.e. in }\Omega,\notag\\
&\varphi|_{\Gamma}(t)=\psi(t),&&\text{a.e. on }\Gamma,\notag\\
&\partial_{\mathbf{n}}\varphi(t)-\Delta_{\Gamma}\psi(t)+\psi(t)=\theta(t) -\beta_{\Gamma}\big(\psi(t)\big)-\pi_{\Gamma}\big(\psi(t)\big)+\psi(t):=h_{\Gamma}(t),&&\text{a.e. on }\Gamma,\notag
\end{align}
for almost all $t\geq T_{\mathrm{SP}}$. From the strict separation property \eqref{eq4.13}, \eqref{uni1} and $\mathbf{(A1)}$, we obtain
\begin{align*}
\Vert\beta\big(\varphi(t)\big)\Vert_{V}+\Vert\beta_{\Gamma}\big(\psi(t)\big)\Vert_{V_{\Gamma}}\leq C,\quad\text{for a.a. }t\geq T_{\mathrm{SP}}.
\end{align*}
Next, it follows from \eqref{4.13} that
\begin{align*}
& \Vert\mu(t)\Vert_{V}+\Vert\theta(t)\Vert_{V_{\Gamma}}\leq C,\qquad\quad  \text{for a.a. }t\geq T_{\mathrm{SP}},\quad \text{if }\sigma>0,\\
& \Vert\mu(t)\Vert_{V}+\Vert\theta(t)\Vert_{H^{1/2}(\Gamma)}\leq C,\quad \text{for a.a. }t\geq T_{\mathrm{SP}},\quad \text{if }\sigma=0.
\end{align*}
As a consequence, we have
\begin{align*}
&\Vert h(t)\Vert_{V}+\Vert h_{\Gamma}(t)\Vert_{V_{\Gamma}}\leq C,\qquad\quad \text{for a.a. }t\geq T_{\mathrm{SP}},\quad \text{if }\sigma>0,\\
& \Vert h(t)\Vert_{V}+\Vert h_{\Gamma}(t)\Vert_{H^{1/2}(\Gamma)}\leq C,\quad\text{for a.a. }t\geq T_{\mathrm{SP}},\quad \text{if }\sigma=0.
\end{align*}
Applying the elliptic regularity theorem (see \cite[Corollary A.1]{GMS09}), we get the uniform estimate \eqref{6.1}.
\end{proof}

By the estimates \eqref{4.6}, \eqref{6.1} and Lemma \ref{ALS}, we find that $\boldsymbol{\varphi}\in C([t,t+1];\mathcal{V}^{2})$, for all $ t\geq T_{\mathrm{SP}}$. Hence, it holds
\begin{align*}
\boldsymbol{\varphi}\in C([T_{\mathrm{SP}},\infty);\mathcal{V}^{2}).
\end{align*}
The uniform estimate \eqref{6.1} and the compact embedding $\mathcal{H}^{2+r}\hookrightarrow\hookrightarrow \mathcal{H}^{2}$ imply that the $\omega$-limit set $\omega(\boldsymbol{\varphi}_{0})$ is non-empty and compact in $\mathcal{V}^{2}$. As a result,
\begin{align*}
\lim_{t\rightarrow\infty}\text{dist}\big(\mathcal{S}(t)\boldsymbol{\varphi}_{0},\omega(\boldsymbol{\varphi}_{0})\big)=0\quad\text{in }\mathcal{V}^{2}.
\end{align*}
Finally, to prove that the $\omega$-limit set $\omega(\bm{\varphi}_0)$ reduces to a singleton, we apply the {\L}ojasiewicz-Simon approach, see, e.g., for applications to the Cahn-Hilliard type equations \cite{AW,GKY,GGM,FW,LW,Wu}. For our current setting, the main tool is the following extended {\L}ojasiewicz-Simon inequality (see \cite[Lemma 5.2]{FW}). Given $a\in(-1,1)$, it is straightforward to verify that every $\boldsymbol{\zeta}=(\zeta,\zeta_{\Gamma})\in\boldsymbol{\mathcal{S}}_{a}$ is a critical point of the free energy $E$ defined by \eqref{energy}. Moreover, we have
\begin{lemma}\label{LS}
Suppose that $\mathbf{(A1)}$--$\mathbf{(A3)}$ hold. In addition, we assume that $\beta$, $\beta_{\Gamma}$ are real analytic on $(-1,1)$ and $\pi$, $\pi_{\Gamma}$ are real analytic on $\mathbb{R}$. Let $\boldsymbol{\zeta}=(\zeta,\zeta_{\Gamma})\in\boldsymbol{\mathcal{S}}_{a}$ for some $a\in(-1,1)$. There exist constants $\theta^{*}\in(0,1/2)$ and $b^{*}>0$ such that
\begin{align}
\Big\Vert
\mathbf{P}
\Big(
\begin{array}{c}
-\Delta \varphi+\beta(\varphi)+\pi(\varphi)\\
\partial_{\mathbf{n}}\varphi-\Delta_{\Gamma}\psi+\beta_{\Gamma}(\psi)+\pi_{\Gamma}(\psi)
\end{array}\Big)\Big\Vert_{\mathcal{L}^{2}} \geq\big|E\big(\boldsymbol{\varphi}\big)-E\big(\boldsymbol{\zeta}\big)\big|^{1-\theta^{*}},
\label{6.2}
\end{align}
for all $\boldsymbol{\varphi}=(\varphi, \psi)\in\mathcal{V}^{2}$ satisfying $\Vert\boldsymbol{\varphi}-\boldsymbol{\zeta}\Vert_{\mathcal{V}^{2}}<b^{*}$ and $\overline{m}(\boldsymbol{\varphi})=a$.
\end{lemma}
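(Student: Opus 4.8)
\textbf{Proof proposal for Lemma \ref{LS} (extended {\L}ojasiewicz--Simon inequality).}
The plan is to reduce the bulk--surface problem to a known finite-codimension {\L}ojasiewicz--Simon setting by working on the constrained manifold of fixed generalized mean $\overline{m}(\bm{\varphi})=a$. First I would introduce the reduced energy $E$ restricted to the affine subspace $\{\bm{\varphi}\in\mathcal{V}^1:\overline{m}(\bm{\varphi})=a\}$ and observe that, after the projection $\mathbf{P}$, its $\mathcal{L}^2$-gradient at $\bm{\varphi}$ is exactly the left-hand side of \eqref{6.2}, so that $\bm\zeta\in\bm{\mathcal{S}}_a$ is precisely a critical point. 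The key structural fact that makes the abstract machinery applicable is that, near a separated steady state $\bm\zeta$ (which is bounded away from $\pm 1$ by the stationary separation bound from \cite{FW} recalled above), the singular nonlinearities $\beta,\beta_\Gamma$ are real analytic on a neighbourhood of the range of $\bm\zeta$, and together with the analyticity of $\pi,\pi_\Gamma$ on $\mathbb R$ this yields that $E$ is a real-analytic functional on a small $\mathcal{V}^2$-ball around $\bm\zeta$.

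Next I would set up the functional-analytic framework: take $X=\mathcal{V}^2\cap\{\overline{m}=a\}$ (or its tangent space $\mathcal{V}^2_{(0)}$), $Y=\mathcal{L}^2_{(0)}$, and let $\mathcal{M}:X\to Y$ be the nonlinear map $\bm{\varphi}\mapsto\mathbf{P}\big(-\Delta\varphi+\beta(\varphi)+\pi(\varphi),\ \partial_{\mathbf n}\varphi-\Delta_\Gamma\psi+\beta_\Gamma(\psi)+\pi_\Gamma(\psi)\big)$, which is the gradient of the reduced $E$. One checks that $\mathcal{M}$ is analytic near $\bm\zeta$ and that its linearization $\mathcal{L}=D\mathcal{M}(\bm\zeta)$ is, up to lower-order analytic perturbations, the operator $\bm{h}\mapsto\mathbf{P}(-\Delta h+\beta'(\zeta)h,\ \partial_{\mathbf n}h-\Delta_\Gamma h_\Gamma+\beta_\Gamma'(\psi_\zeta)h_\Gamma)$ acting on $\mathcal{V}^2_{(0)}$. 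Because $\zeta$ is strictly separated, $\beta'(\zeta)$ and $\beta_\Gamma'(\psi_\zeta)$ are bounded, so $\mathcal{L}$ is a bounded-below self-adjoint operator with compact resolvent; hence it is Fredholm of index zero as a map $\mathcal{V}^2_{(0)}\to\mathcal{L}^2_{(0)}$, and its kernel is finite-dimensional. This is exactly the hypothesis needed to invoke the abstract {\L}ojasiewicz--Simon theorem (in the Fredholm-with-analytic-nonlinearity form, as in the references \cite{AW,FW,LW,Wu} cited in the excerpt), which delivers constants $\theta^*\in(0,1/2)$ and $b^*>0$ with $\|\mathcal{M}(\bm{\varphi})\|_{\mathcal{L}^2}\ge|E(\bm{\varphi})-E(\bm\zeta)|^{1-\theta^*}$ for $\|\bm{\varphi}-\bm\zeta\|_{\mathcal{V}^2}<b^*$ and $\overline{m}(\bm{\varphi})=a$; since the statement to be proved is literally \cite[Lemma 5.2]{FW} transplanted to the present parameter range, I would invoke that result, checking only that the stationary problem \eqref{stationary} is identical in all cases of \eqref{choice} (Remark \ref{sta-ob}) so the constants $\theta^*,b^*$ are unaffected by $(L,\sigma)$.

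I expect the main obstacle to be the careful verification that the linearized operator is genuinely Fredholm with the right kernel/cokernel structure in the bulk--surface (Wentzell-type) functional setting: the boundary condition $\varphi|_\Gamma=\psi$ couples $\Omega$ and $\Gamma$ into a single elliptic system, and one must ensure that the natural trace-compatible spaces $\mathcal{V}^2_{(0)}$ are the correct domain, that the bilinear form $a_{L,\sigma}(\cdot,\cdot)$ plus the zeroth-order analytic terms is coercive on $\mathcal{V}^1_{(0)}$ modulo a compact perturbation, and that the elliptic regularity result \cite[Corollary A.1]{GMS09} (already used in \eqref{6.1}) promotes the kernel to $\mathcal{H}^{2+r}$ so that the index-zero Fredholm property holds on the scale of spaces used in the abstract theorem. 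Once this is in place, the gradient inequality on the constrained space follows from the abstract result with no further difficulty, and the mean constraint $\overline{m}(\bm{\varphi})=a$ is precisely what removes the one-dimensional degeneracy coming from the constant functions.
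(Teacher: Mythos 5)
The paper does not prove this lemma itself: it is imported verbatim from \cite[Lemma 5.2]{FW}, with Remark \ref{sta-ob} guaranteeing that the stationary problem \eqref{stationary} — and hence the constants $\theta^{*}$, $b^{*}$ — is the same for every choice of $(L,\sigma)$ in \eqref{choice}. Your proposal correctly identifies exactly this reduction, and your accompanying sketch (analyticity of $E$ near a strictly separated steady state, index-zero Fredholm property of the linearized Wentzell-type operator from $\mathcal{V}^{2}_{(0)}$ to $\mathcal{L}^{2}_{(0)}$, then the abstract {\L}ojasiewicz--Simon theorem on the mean-constrained manifold) is the standard argument underlying the cited result, so the approach matches the paper's.
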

\begin{proof}[\textbf{Proof of Theorem \ref{equilibrium}}]
With the aid of Lemma \ref{LS}, the proof can be carried out in a standard procedure, we refer to \cite{FW} for the specific case with $L=0$, $\sigma\in [0,\infty)$, see also \cite{GW} for the case $L=0$, $\sigma=0$ and \cite{GKY} for the case $L, \sigma\in (0,\infty)$, both with regular potentials. The detailed proof will not be repeated here and we just mention some key points. First, we can show that the unique global weak solution $\boldsymbol{\varphi}$ of problem \eqref{model} will fall into a small $\mathcal{V}^{2}$-neighborhood of $\omega(\boldsymbol{\varphi}_{0})$ from a sufficiently large time on. Then, by the generalized Poincar\'e's inequalities (see Lemmas \ref{equivalent}, \ref{equivalentb}) and Corollary \ref{decay1}, we find
\begin{align*}
\Vert\boldsymbol{\varphi}'\Vert_{\mathcal{H}_{L,\sigma,0}^{-1}}=\Vert\mathbf{P}\boldsymbol{\mu}\Vert_{\mathcal{H}_{L,\sigma,0}^{1}}\geq C|E\big(\boldsymbol{\varphi}\big)-E_\infty|^{1-\theta^{\ast}}.
\end{align*}
This inequality combined with the energy equality \eqref{Energyequ} and the {\L}ojasiewicz-Simon inequality \eqref{6.2} yields integrability of the time derivative of $\boldsymbol{\varphi}$ such that
$\boldsymbol{\varphi}'\in L^1(T_{\mathrm{SP}},\infty; \mathcal{H}_{L,\sigma,0}^{-1})$.
Consequently, the convergence $\boldsymbol{\varphi}(t)\rightarrow\boldsymbol{\varphi}_{\infty}$ holds in $\mathcal{H}_{L,\sigma,0}^{-1}$ as $t\rightarrow\infty$. With the compactness of the trajectory
of $\boldsymbol{\varphi}$ (see \eqref{6.1}), we can further conclude $\boldsymbol{\varphi}(t)\rightarrow\boldsymbol{\varphi}_{\infty}$ in $\mathcal{V}^{2}$ as $t\rightarrow\infty$. Finally, the convergence rate \eqref{converrate} follows from an argument similar to that in \cite{GW}.
\end{proof}

\appendix
\section{Useful tools}
\setcounter{equation}{0}
\noindent We report some technical lemmas that have been used
in our analysis. First, we recall the compactness lemma of Aubin-Lions-Simon
type (see, for instance, \cite{Lions} in the case $q>1$ and \cite{Simon}
when $q=1 $).

\begin{lemma}
\label{ALS} Let $X_{0} \overset{c}{\hookrightarrow } X_{1}\subset X_{2}$
where $X_{j}$ are (real) Banach spaces ($j=0,1,2$). Let $1<p\leq \infty $, $%
1\leq q\leq \infty ~$and $I$ be a bounded subinterval of $\mathbb{R}$. Then,
the sets
\begin{equation*}
\left\{ \varphi \in L^{p}\left( I;X_{0}\right) :\partial _{t}\varphi \in
L^{q}\left( I;X_{2}\right) \right\} \overset{c}{\hookrightarrow }
L^{p}\left( I;X_{1}\right),\quad \text{ if }1<p<\infty,
\end{equation*}
and
\begin{equation*}
\left\{ \varphi \in L^{p}\left( I;X_{0}\right) :\partial _{t}\varphi \in
L^{q}\left( I;X_{2}\right) \right\} \overset{c}{\hookrightarrow } C\left(
I;X_{1}\right),\quad \text{ if }p=\infty ,\text{ }q>1.
\end{equation*}
\end{lemma}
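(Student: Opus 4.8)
Lemma~\ref{ALS} is a classical Aubin--Lions--Simon compactness result; the plan is to recall the standard proof for $q>1$ (cf.\ \cite{Lions}) and to indicate where Simon's refinement \cite{Simon} is needed when $q=1$. Write $W:=\{\varphi\in L^p(I;X_0):\partial_t\varphi\in L^q(I;X_2)\}$ and fix a bounded sequence $(u_n)\subset W$. The cornerstone is the Ehrling--Lions interpolation inequality: since $X_0\overset{c}{\hookrightarrow}X_1$ and $X_1\subset X_2$ continuously, for each $\eta>0$ there is $C_\eta>0$ with
\begin{equation*}
\|v\|_{X_1}\le\eta\|v\|_{X_0}+C_\eta\|v\|_{X_2},\qquad\forall\,v\in X_0,
\end{equation*}
which follows by contradiction from the compactness of $X_0\hookrightarrow X_1$ together with the continuity of $X_1\subset X_2$. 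Taking $L^p$-norms in time, this reduces the problem to producing a subsequence that is Cauchy in $L^p(I;X_2)$ (resp.\ in $C(I;X_2)$ when $p=\infty$), the $L^p(I;X_0)$-remainder being absorbed by the uniform bound on $(u_n)$ once $\eta$ is chosen small.

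The time compactness comes from $\partial_t u_n\in L^q(I;X_2)$. From $u_n(t+h)-u_n(t)=\int_0^h\partial_t u_n(t+s)\,\mathrm{d}s$ one obtains the translation estimate $\|\tau_h u_n-u_n\|_{L^q(I_h;X_2)}\le h\,\|\partial_t u_n\|_{L^q(I;X_2)}$ and, for $q>1$, the pointwise H\"older bound $\|u_n(t)-u_n(s)\|_{X_2}\le|t-s|^{1-1/q}\|\partial_t u_n\|_{L^q(I;X_2)}$. For $1<p<\infty$ and any $q>1$ I would invoke the vector-valued Riesz--Fr\'echet--Kolmogorov criterion in $L^p(I;X_1)$: the translation estimate, the interpolation inequality and the uniform $L^p(I;X_1)$-bound give $\sup_n\|\tau_h u_n-u_n\|_{L^p(I_h;X_1)}\to0$ as $h\to0$, while the local means $\{\int_a^b u_n(t)\,\mathrm{d}t:n\in\mathbb{N}\}$, being bounded in $X_0$ for any $[a,b]\subset\subset I$, are relatively compact in $X_1$; hence a subsequence converges in $L^p(I;X_1)$. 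For $p=\infty$ and $q>1$ I would argue by Arzel\`a--Ascoli: the H\"older bound makes $(u_n)$ equicontinuous into $X_2$, and the short-time averages $u_n^{(\ell)}(t):=\ell^{-1}\int_t^{t+\ell}u_n(s)\,\mathrm{d}s$ stay in the fixed ball of $X_0$ of radius $\sup_n\|u_n\|_{L^\infty(I;X_0)}$, hence in a fixed $X_1$-compact set $\mathcal{K}$, and converge to $u_n(t)$ in $X_2$; as $\mathcal{K}$ is compact in $X_2$ as well, every value $u_n(t)$ lies in $\mathcal{K}$, on which the $X_1$- and $X_2$-topologies coincide, so the equicontinuity and pointwise precompactness transfer to $X_1$ and $(u_n)$ has a subsequence converging in $C(I;X_1)$.

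The genuinely delicate case — and the reason the statement splits the citation into \cite{Lions} for $q>1$ and \cite{Simon} for $q=1$ — is $q=1$ with $1<p<\infty$. Then $\partial_t u_n$ is only bounded in $L^1(I;X_2)$, there is no H\"older modulus of continuity, $W$ does not embed into $C(I;X_2)$, and the crude interpolation of Lebesgue exponents gives smallness of $\tau_h u_n-u_n$ in $L^r(I;X_2)$ only for $r<p$, not for $r=p$. Simon's remedy is to first obtain relative compactness in $L^1(I;X_1)$ — for which the $L^1$-translation estimate $\|\tau_h u_n-u_n\|_{L^1(I_h;X_2)}\le h\,\|\partial_t u_n\|_{L^1(I;X_2)}$ already suffices — and then to recover the full $L^p$-integrability of the limit from the uniform bound of $(u_n)$ in $L^p(I;X_0)$ via a careful truncation/interpolation argument. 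I expect reproducing this last step to be the main obstacle; since the result is entirely standard, in the paper it suffices to cite \cite{Lions,Simon} as stated.
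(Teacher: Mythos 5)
The paper does not actually prove Lemma \ref{ALS}: it is recalled as a classical result, with the proof delegated to \cite{Lions} for $q>1$ and to \cite{Simon} for $q=1$, so the only internal ``proof'' to compare against is that citation. Your sketch of the two cases with $q>1$ is the standard argument and is correct: Ehrling's inequality reduces everything to compactness measured in $X_2$; the bound $\|u_n(t)-u_n(s)\|_{X_2}\le|t-s|^{1-1/q}\|\partial_tu_n\|_{L^q(I;X_2)}$ supplies the uniform translation estimate (resp.\ the equicontinuity); and Simon's $L^p$ Kolmogorov-type criterion (resp.\ Arzel\`a--Ascoli combined with the observation that all values lie in a fixed $X_1$-compact set on which the $X_1$- and $X_2$-topologies coincide) concludes.

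The one place where your outline would break if executed literally is the description of the $q=1$, $1<p<\infty$ case. Relative compactness in $L^1(I;X_1)$ together with boundedness in $L^p(I;X_0)$ does \emph{not} by itself yield relative compactness in $L^p(I;X_1)$: the concentrating sequence $u_n(t)=n^{1/p}\,\mathbf{1}_{[0,1/n]}(t)\,v$ with $v\in X_0$ fixed is bounded in $L^p(I;X_0)$ and converges to $0$ in $L^1(I;X_1)$ but not in $L^p(I;X_1)$, so no truncation or interpolation of the limit can close this step without re-using the derivative bound. Simon's actual mechanism is to obtain the translation estimate directly in $L^p(I;X_2)$: setting $g_h(t):=\int_t^{t+h}\|\partial_tu_n(s)\|_{X_2}\,\mathrm{d}s$, one has $\|g_h\|_{L^\infty}\le\|\partial_tu_n\|_{L^1(I;X_2)}$ and $\|g_h\|_{L^1(I_h)}\le h\,\|\partial_tu_n\|_{L^1(I;X_2)}$, whence $\|\tau_hu_n-u_n\|_{L^p(I_h;X_2)}\le\|g_h\|_{L^p(I_h)}\le h^{1/p}\|\partial_tu_n\|_{L^1(I;X_2)}\to0$ uniformly in $n$; after that the argument is identical to your $q>1$ case. (Note that the concentrating example above has a distributional time derivative that is a measure rather than an $L^1(I;X_2)$ function, which is exactly the hypothesis that rescues the estimate.) Since you defer this case to \cite{Simon} in any event --- as the paper itself does --- the proposal is acceptable as written, but the hinted two-step route should not be presented as the proof.
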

The proof of the following Ehrling lemma can be found in \cite{Lions}.
\begin{lemma}
\label{Ehrling}
Let $B_{0}$, $B_{1}$, $B$ be three Banach spaces so that $B_{0}$ and $B_{1}$ are reflexive. Moreover, $B_{0}\hookrightarrow\hookrightarrow B\hookrightarrow B_{1}$. Then, for each $\epsilon>0$, there exists a positive constant $C_{\epsilon}$ depends on $\epsilon$ such that$$\Vert z\Vert_{B}\leq\epsilon\Vert z\Vert_{B_{0}}+C_{\epsilon}\Vert z\Vert_{B_{1}},\quad \text{for all }z\in B_{0}.$$
\end{lemma}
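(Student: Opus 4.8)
The plan is to argue by contradiction, exploiting the compactness of the embedding $B_0\hookrightarrow\hookrightarrow B$ together with the reflexivity of $B_0$. Suppose the asserted inequality fails. Then there is some fixed $\epsilon_0>0$ for which no admissible constant exists; testing the candidate constants $C=n$, $n\in\mathbb{N}$, and letting $n\to\infty$, I would produce a sequence $(z_n)\subset B_0$ satisfying
$$\|z_n\|_B > \epsilon_0\|z_n\|_{B_0} + n\|z_n\|_{B_1},\qquad\forall\,n\in\mathbb{N}.$$
Since the right-hand side is nonnegative, each $z_n\neq 0$, so after dividing by $\|z_n\|_B$ I may assume the normalization $\|z_n\|_B=1$ for every $n$.

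From the normalized inequality $1>\epsilon_0\|z_n\|_{B_0}+n\|z_n\|_{B_1}$ I would extract two facts: first, $\|z_n\|_{B_0}<\epsilon_0^{-1}$, so $(z_n)$ is bounded in $B_0$; second, $\|z_n\|_{B_1}<n^{-1}\to 0$, so $z_n\to 0$ strongly in $B_1$. Because $B_0$ is reflexive and $(z_n)$ is bounded in $B_0$, I can pass to a subsequence (not relabeled) with $z_n\rightharpoonup z$ weakly in $B_0$ for some $z\in B_0$.

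The crucial step is to upgrade this weak convergence to strong convergence in $B$. Since the embedding $B_0\hookrightarrow B$ is compact, it sends weakly convergent sequences to strongly convergent ones: the embedding is continuous (hence $z_n\rightharpoonup z$ in $B$) and compact (hence $(z_n)$ is precompact in $B$), and a precompact sequence that converges weakly must converge strongly to its weak limit. Thus $z_n\to z$ strongly in $B$, which forces $\|z\|_B=\lim_n\|z_n\|_B=1$; in particular $z\neq 0$.

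Finally, I would derive the contradiction from the continuous embedding $B\hookrightarrow B_1$. Strong convergence $z_n\to z$ in $B$ implies $z_n\to z$ in $B_1$; but we already have $z_n\to 0$ in $B_1$, so uniqueness of limits in $B_1$ yields $z=0$, contradicting $\|z\|_B=1$. Hence the inequality must hold for every $\epsilon>0$. The only delicate point is the passage from weak convergence in $B_0$ to strong convergence in $B$, which is precisely where the compactness of the embedding (rather than mere continuity) and the reflexivity of $B_0$ enter; reflexivity of $B_1$ is not actually needed in this argument.
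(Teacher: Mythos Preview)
Your contradiction argument is correct and is the standard proof of Ehrling's lemma. The paper does not actually supply its own proof of this lemma; it simply states the result and refers to Lions \cite{Lions}, so there is nothing to compare against beyond noting that your argument is the classical one.

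One minor remark: the reflexivity hypothesis on $B_0$ is in fact superfluous in your argument (and you already observed that reflexivity of $B_1$ plays no role). Once $(z_n)$ is bounded in $B_0$, the compact embedding $B_0\hookrightarrow\hookrightarrow B$ directly yields a subsequence converging strongly in $B$ to some limit $z$, without passing through weak convergence in $B_0$. The rest of your argument then goes through unchanged. So Ehrling's lemma holds for arbitrary Banach spaces $B_0,B,B_1$ with $B_0\hookrightarrow\hookrightarrow B\hookrightarrow B_1$; the reflexivity assumptions in the paper's statement are inherited from the version in \cite{Lions} but are not essential.
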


The following generalized Poincar\'{e} type inequality has been proved in \cite[Lemma A.1, Corollary A.2]{KL} (for $\sigma=1$, the extension to $\sigma\in (0,\infty)$ is straightforward, see also \cite{CF15}):
\begin{lemma} \label{equivalent}
  There exists a constant $c_P>0$ depending only on $L\in [0,\infty)$, $\sigma\in (0,\infty)$ and $\Omega$ such that
 \begin{align}
  \|(y,y_{\Gamma})\|_{\mathcal{L}^2}\leq c_P \Vert(y,y_{\Gamma})\Vert_{\mathcal{H}^{1}_{L,\sigma,0}},\quad \forall\, (y,y_{\Gamma})\in \mathcal{H}^{1}_{L,\sigma,0}.
  \notag
  \end{align}
\end{lemma}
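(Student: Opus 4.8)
\textbf{Proof proposal for Lemma \ref{equivalent}.}
The plan is a standard compactness-contradiction argument, carried out uniformly for $L\in(0,\infty)$ and $L=0$ (recall $\sigma\in(0,\infty)$ is fixed here). Suppose the claimed inequality fails. Then there exists a sequence $\{(y_{n},y_{\Gamma,n})\}\subset\mathcal{H}^{1}_{L,\sigma,0}$ which, after normalization, satisfies $\|(y_{n},y_{\Gamma,n})\|_{\mathcal{L}^{2}}=1$ for every $n$, while
\begin{align*}
\|(y_{n},y_{\Gamma,n})\|_{\mathcal{H}^{1}_{L,\sigma,0}}^{2}
=\int_{\Omega}|\nabla y_{n}|^{2}\,\mathrm{d}x
+\sigma\int_{\Gamma}|\nabla_{\Gamma}y_{\Gamma,n}|^{2}\,\mathrm{d}S
+\chi(L)\int_{\Gamma}(y_{n}-y_{\Gamma,n})^{2}\,\mathrm{d}S
\longrightarrow 0.
\end{align*}
In particular each of the three nonnegative terms tends to $0$. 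Since $\sigma>0$ and $\|y_{n}\|_{H}^{2}+\|y_{\Gamma,n}\|_{H_{\Gamma}}^{2}=1$, the sequence $\{(y_{n},y_{\Gamma,n})\}$ is bounded in $\mathcal{H}^{1}=H^{1}(\Omega)\times H^{1}(\Gamma)$. (When $L=0$ one also has $y_{n}|_{\Gamma}=y_{\Gamma,n}$, which is automatically compatible with this bound.)

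Next I would extract a subsequence (not relabeled) such that $y_{n}\rightharpoonup y$ weakly in $H^{1}(\Omega)$ and strongly in $H=L^{2}(\Omega)$, and $y_{\Gamma,n}\rightharpoonup y_{\Gamma}$ weakly in $H^{1}(\Gamma)$ and strongly in $H_{\Gamma}=L^{2}(\Gamma)$, using the Rellich–Kondrachov theorem on $\Omega$ and on the smooth manifold $\Gamma$. By weak lower semicontinuity of the Dirichlet energies, $\int_{\Omega}|\nabla y|^{2}\le\liminf\int_{\Omega}|\nabla y_{n}|^{2}=0$ and $\int_{\Gamma}|\nabla_{\Gamma}y_{\Gamma}|^{2}\le\liminf\int_{\Gamma}|\nabla_{\Gamma}y_{\Gamma,n}|^{2}=0$, so $y\equiv c$ and $y_{\Gamma}\equiv c_{\Gamma}$ are constants. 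To see $c=c_{\Gamma}$: if $L\in(0,\infty)$ then $\chi(L)=1/L>0$ and $\int_{\Gamma}(y_{n}-y_{\Gamma,n})^{2}\to0$ together with the strong $L^{2}$ convergences force $y|_{\Gamma}=y_{\Gamma}$ a.e.\ on $\Gamma$, i.e.\ $c=c_{\Gamma}$; if $L=0$ then $y_{n}|_{\Gamma}=y_{\Gamma,n}$ and the continuity of the trace operator from the weak $H^{1}(\Omega)$ topology into $L^{2}(\Gamma)$ gives $y|_{\Gamma}=y_{\Gamma}$, again $c=c_{\Gamma}$. Finally, the constraint $\overline{m}(y_{n},y_{\Gamma,n})=0$ passes to the limit by the $L^{2}$ (resp.\ $(\mathcal{V}^{1})'$-weak) convergences, so $\overline{m}(c,c)=c=0$. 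Hence $(y,y_{\Gamma})=(0,0)$, contradicting $\|(y,y_{\Gamma})\|_{\mathcal{L}^{2}}=\lim_{n}\|(y_{n},y_{\Gamma,n})\|_{\mathcal{L}^{2}}=1$.

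The mildly delicate point — and the one I would write out carefully — is the identification of the weak limit as a \emph{constant pair with matching boundary value}, treating $L\in(0,\infty)$ (where the coupling is quantitative, via the penalization $\chi(L)\int_{\Gamma}(y-y_{\Gamma})^{2}$) and $L=0$ (where $y_{\Gamma}=y|_{\Gamma}$ is built into the space $\mathcal{V}^{1}$) in one stroke, and checking that the generalized mean-value functional $\overline{m}$ in \eqref{gmean} is indeed continuous along the extracted subsequence so that the zero-mean constraint survives in the limit. The boundedness of $\{y_{\Gamma,n}\}$ in $H^{1}(\Gamma)$ crucially uses $\sigma>0$; this is exactly why the statement is restricted to $\sigma\in(0,\infty)$, the case $L=\sigma=0$ being covered separately in Lemma \ref{equivalentb}. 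Everything else is routine, and the passage from $\sigma=1$ (as in \cite{KL}) to general $\sigma\in(0,\infty)$ changes nothing in the argument.
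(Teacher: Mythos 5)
Your compactness--contradiction argument is correct, and it is essentially the same proof as the one the paper relies on: the paper does not prove Lemma \ref{equivalent} itself but cites \cite[Lemma A.1, Corollary A.2]{KL}, where the inequality is established by exactly this kind of normalization-and-Rellich contradiction argument (with the trace compactness $H^{1}(\Omega)\hookrightarrow\hookrightarrow L^{2}(\Gamma)$ handling the coupling term, as you note). The only cosmetic remark is that in the $L\in(0,\infty)$ case you can identify $y|_{\Gamma}=y_{\Gamma}$ even more cheaply from $\|y_{n}|_{\Gamma}-y_{\Gamma,n}\|_{L^{2}(\Gamma)}\to0$ and uniqueness of weak $L^{2}(\Gamma)$-limits, without invoking strong trace convergence.
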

We also recall the generalized Poincar\'{e}'s inequality proved in \cite[Section 2]{Gal}:
\begin{lemma}\label{equivalentb}
  There exists a constant $\widetilde{c}_P>0$ depending only on $\Omega$ such that
  \begin{align}
   \Vert y\Vert_{H}\leq \widetilde{c}_{P} \Vert\nabla y\Vert_{H},\quad\forall \,(y,y_{\Gamma})\in \widetilde{\mathcal{V}}_{(0)}^1.
   \notag
  \end{align}
\end{lemma}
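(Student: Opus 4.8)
The plan is to prove this generalized Poincar\'e inequality by a compactness--contradiction argument, using the compact embedding $H^1(\Omega)\hookrightarrow\hookrightarrow L^2(\Omega)$ and the compactness of the trace operator $H^1(\Omega)\to L^2(\Gamma)$. Suppose the asserted estimate fails for every constant. Then there is a sequence $(y_n,y_n|_\Gamma)\in\widetilde{\mathcal{V}}_{(0)}^1$ with $\|y_n\|_H=1$ and $\|\nabla y_n\|_H\le 1/n$. In particular $(y_n)$ is bounded in $H^1(\Omega)$, so along a subsequence (not relabelled) $y_n\rightharpoonup y$ weakly in $H^1(\Omega)$, $y_n\to y$ strongly in $H=L^2(\Omega)$, and $y_n|_\Gamma\to y|_\Gamma$ strongly in $H_\Gamma=L^2(\Gamma)$ by compactness of the trace map. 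The strong $L^2$-convergence gives $\|y\|_H=1$, while $\|\nabla y_n\|_H\to 0$ together with weak lower semicontinuity of $v\mapsto\|\nabla v\|_H$ forces $\nabla y=0$; since $\Omega$ is a (connected) domain, $y\equiv c$ is a nonzero constant.

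It remains to pass to the limit in the mean-value constraint. Since $\overline{m}(y_n,y_n|_\Gamma)=0$ reads $|\Omega|\langle y_n\rangle_\Omega+|\Gamma|\langle y_n|_\Gamma\rangle_\Gamma=0$, and both spatial means are continuous under the convergences above (the bulk mean under strong $L^2(\Omega)$-convergence, the boundary mean under strong $L^2(\Gamma)$-convergence of the traces), the limit satisfies $(|\Omega|+|\Gamma|)c=0$, hence $c=0$, contradicting $\|y\|_H=1$. This yields the existence of a constant $\widetilde{c}_P>0$ depending only on $\Omega$ (through the constants in the Rellich and trace theorems, which in turn depend on the geometry of $\Gamma=\partial\Omega$).

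An equivalent, quantitative route avoids compactness: write $y=(y-\langle y\rangle_\Omega)+\langle y\rangle_\Omega$, apply the classical Poincar\'e--Wirtinger inequality $\|y-\langle y\rangle_\Omega\|_H\le C\|\nabla y\|_H$ and the trace inequality $\|(y-\langle y\rangle_\Omega)|_\Gamma\|_{H_\Gamma}\le C\|y-\langle y\rangle_\Omega\|_{H^1(\Omega)}\le C\|\nabla y\|_H$; the constraint then gives $(|\Omega|+|\Gamma|)\langle y\rangle_\Omega=-|\Gamma|\langle(y-\langle y\rangle_\Omega)|_\Gamma\rangle_\Gamma$, so that $|\langle y\rangle_\Omega|\le C\|\nabla y\|_H$, and adding the two bounds yields the claim. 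The only delicate point in either approach is the control of the boundary mean---through compactness, respectively continuity, of the trace operator, which requires $\Gamma$ to be regular enough; this is assumed throughout, so no genuine obstacle arises, and indeed the result is classical (see \cite[Section 2]{Gal}).
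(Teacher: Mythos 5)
The paper does not actually prove this lemma: it is stated in the Appendix as a known tool and the proof is deferred to \cite[Section 2]{Gal}, so there is no in-paper argument to compare against. Your proposal supplies a complete and correct self-contained proof. The compactness--contradiction argument is sound: boundedness in $H^1(\Omega)$, the Rellich theorem, and the compactness of the trace operator $H^1(\Omega)\to L^2(\Gamma)$ (via $H^{1/2}(\Gamma)\hookrightarrow\hookrightarrow L^2(\Gamma)$) let you pass to the limit in both the normalization and the constraint $|\Omega|\langle y_n\rangle_\Omega+|\Gamma|\langle y_n|_\Gamma\rangle_\Gamma=0$, and connectedness of $\Omega$ (implicit in the paper's standing assumption that $\Omega$ is a bounded domain) forces the limit to be a constant, which the constraint then kills. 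The quantitative variant is equally valid and arguably preferable here, since it exhibits $\widetilde{c}_P$ explicitly in terms of the Poincar\'e--Wirtinger and trace constants and avoids the nonconstructive step; the only computation worth spelling out is $\langle y|_\Gamma\rangle_\Gamma=\langle (y-\langle y\rangle_\Omega)|_\Gamma\rangle_\Gamma+\langle y\rangle_\Omega$, which you use correctly to isolate $\langle y\rangle_\Omega$. No gaps.
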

\noindent

The following lemma is useful when we apply De Giorgi's iteration (cf. \cite{Di,Po}).
\begin{lemma}
\label{itera}
Let $\{z_{n}\}_{n\in\mathbb{N}}\subset \mathbb{R}^{+}$ satisfy the recursive inequalities:
\begin{align}
z_{n+1}\leq Cb^{n}z_{n}^{1+\epsilon},\quad\forall \;n\geq0,\notag
\end{align}
for some constants $C>0$, $b>1$, and $\epsilon>0$. If $z_{0}\leq\varsigma:= C^{-1/\epsilon}b^{-1/\epsilon^{2}}$, then
\begin{align}
z_{n}\leq \varsigma b^{-n/\epsilon},\quad\forall \;n\geq0,\notag
\end{align}
and consequently, $z_{n}\rightarrow0$ as $n\rightarrow\infty.$
\end{lemma}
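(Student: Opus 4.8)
\textbf{Proof strategy for Lemma \ref{itera}.}
The plan is to prove the pointwise decay bound $z_n \leq \varsigma\, b^{-n/\epsilon}$ for every $n\ge 0$ by induction on $n$, and then to let $n\to\infty$ to obtain $z_n\to 0$. This is the classical fast-convergence lemma underpinning De Giorgi's iteration, so the reasoning is elementary; the only point that needs care is checking that the threshold $\varsigma = C^{-1/\epsilon} b^{-1/\epsilon^{2}}$ has been chosen so as to make the induction self-sustaining.

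First I would dispose of the base case $n=0$: the claimed bound reads $z_0 \leq \varsigma b^{0} = \varsigma$, which is exactly the smallness hypothesis. For the inductive step, assuming $z_n \leq \varsigma\, b^{-n/\epsilon}$, I would substitute into the recursive inequality to get $z_{n+1}\leq C b^{n} z_n^{1+\epsilon}\leq C\,\varsigma^{1+\epsilon}\, b^{\,n - n(1+\epsilon)/\epsilon}$, simplify the exponent via $n - n(1+\epsilon)/\epsilon = -n/\epsilon$, and rewrite the result as $z_{n+1}\leq \big(C\,\varsigma^{\epsilon}\big)\,\varsigma\, b^{-n/\epsilon}$. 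It then remains to verify the numerical inequality $C\,\varsigma^{\epsilon}\leq b^{-1/\epsilon}$; with the prescribed value one computes $\varsigma^{\epsilon} = C^{-1} b^{-1/\epsilon}$, so in fact $C\,\varsigma^{\epsilon}= b^{-1/\epsilon}$ and the induction closes with equality, yielding $z_{n+1}\leq \varsigma\, b^{-(n+1)/\epsilon}$.

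The final step is immediate: since $b>1$ and $\epsilon>0$, we have $b^{-n/\epsilon}\to 0$, hence $z_n\to 0$. I do not expect any real obstacle here, as the entire argument is the exponent bookkeeping in the inductive step, the crux being simply that the identity $C\,\varsigma^{\epsilon} b^{1/\epsilon}=1$ is precisely what forces the choice of $\varsigma$. The only thing worth flagging is that the smallness of $z_0$ is genuinely needed: in the applications above it is secured through conditions such as \eqref{de4.44} (for the instantaneous separation) and \eqref{quot10} (for the eventual separation).
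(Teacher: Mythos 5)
Your proof is correct: the induction closes exactly because $C\varsigma^{\epsilon}=b^{-1/\epsilon}$ with the prescribed $\varsigma$, and the exponent bookkeeping $n-n(1+\epsilon)/\epsilon=-n/\epsilon$ is right. The paper itself gives no proof of this lemma, merely citing \cite{Di,Po}, and your argument is precisely the standard fast-geometric-convergence induction found there, so there is nothing further to compare.
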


The following critical Sobolev estimate can be proved by the definition of Sobolev spaces on smooth compact manifold and the corresponding critical estimate in whole space (cf. \cite{BCD})

\begin{lemma}
	\label{critical}
	Let $\mathcal{M}$ be a smooth compact $d$-dimension manifold, then there holds
	\begin{align*}
	\Vert f\Vert_{L^{p}(\mathcal{M})}\leq C\sqrt{p}\Vert f\Vert_{H^{\frac{d}{2}}(\mathcal{M})},\quad\forall\,f\in H^{\frac{d}{2}}(\mathcal{M}),
	\end{align*}
	where the constant $C>0$ is independent of $p\geq2$.
\end{lemma}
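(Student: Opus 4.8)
The plan is to follow the route indicated immediately after the statement: first establish the critical embedding in the whole space $\mathbb{R}^d$ with the explicit factor $\sqrt{p}$, and then transfer it to $\mathcal{M}$ through a finite atlas and a subordinate partition of unity. The Euclidean estimate is where the factor $\sqrt{p}$ is produced, so it is the analytic heart of the lemma; the localization is a matter of bookkeeping, provided one checks that every auxiliary constant is independent of $p$.

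\emph{Step 1 (the Euclidean estimate).} I would first prove that there is a constant $C>0$, independent of $p$, with $\|g\|_{L^{p}(\mathbb{R}^{d})}\le C\sqrt{p}\,\|g\|_{H^{d/2}(\mathbb{R}^{d})}$ for all $p\ge 2$ and all $g\in H^{d/2}(\mathbb{R}^{d})$. Using the Littlewood--Paley decomposition $g=\sum_{j\ge-1}\Delta_{j}g$ together with Bernstein's inequality $\|\Delta_{j}g\|_{L^{p}}\le C\,2^{jd(1/2-1/p)}\|\Delta_{j}g\|_{L^{2}}$, and writing $a_{j}:=2^{jd/2}\|\Delta_{j}g\|_{L^{2}}$ so that $\sum_{j\ge-1}a_{j}^{2}\sim\|g\|_{H^{d/2}(\mathbb{R}^d)}^{2}$, one finds by Cauchy--Schwarz
\[
\|g\|_{L^{p}}\le\sum_{j\ge-1}\|\Delta_{j}g\|_{L^{p}}\le C\sum_{j\ge-1}2^{-jd/p}a_{j}\le C\Big(\sum_{j\ge-1}2^{-2jd/p}\Big)^{1/2}\Big(\sum_{j\ge-1}a_{j}^{2}\Big)^{1/2}.
\]
The decisive point is the borderline geometric series $\sum_{j\ge-1}2^{-2jd/p}\le C\,(1-2^{-2d/p})^{-1}$: since $1-2^{-2d/p}\ge c_{d}/p$ for all $p\ge2$, this sum is $\le C p$, and its square root is bounded by $C\sqrt{p}$ uniformly for $p\ge2$. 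This is precisely the computation underlying \cite{BCD}, and it reflects the borderline nature of $H^{d/2}$ as the critical space (its failure to embed in $L^{\infty}$).

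\emph{Step 2 (localization and transfer).} Since $\mathcal{M}$ is compact, I would fix a finite atlas $\{(U_{i},\kappa_{i})\}_{i=1}^{N}$ with $\kappa_{i}\colon U_{i}\to V_{i}\subset\mathbb{R}^{d}$ smooth diffeomorphisms and a smooth partition of unity $\{\chi_{i}\}$ subordinate to $\{U_{i}\}$. By the triangle inequality $\|f\|_{L^{p}(\mathcal{M})}\le\sum_{i=1}^{N}\|\chi_{i}f\|_{L^{p}(\mathcal{M})}$, and each $\chi_{i}f$ is compactly supported in $U_{i}$, hence pushes forward to $g_{i}:=(\chi_{i}f)\circ\kappa_{i}^{-1}\in H^{d/2}(\mathbb{R}^{d})$. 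Applying Step 1 to $g_{i}$ and undoing the change of variables yields $\|\chi_{i}f\|_{L^{p}(\mathcal{M})}\le C_{i}\sqrt{p}\,\|f\|_{H^{d/2}(\mathcal{M})}$, and summing over the finitely many $i$ gives the assertion with $C=\sum_{i}C_{i}$. The only thing to verify is $p$-independence of the transfer constants: on the $L^{p}$ side the change of variables introduces the factor $|\det D\kappa_{i}^{-1}|^{1/p}$, bounded between positive constants on the relatively compact support and with $1/p$-th power bounded uniformly for $p\ge2$ (it tends to $1$ as $p\to\infty$); on the $H^{d/2}$ side the Bessel-potential norm on $\mathcal{M}$ is, by its chart definition, comparable to $\sum_{i}\|(\chi_{i}f)\circ\kappa_{i}^{-1}\|_{H^{d/2}(\mathbb{R}^{d})}$, while pull-back by a smooth diffeomorphism and multiplication by the fixed cutoff $\chi_{i}$ are bounded on $H^{d/2}$ with constants depending only on $\mathcal{M}$, the atlas and $\{\chi_{i}\}$, not on $p$. (If $\mathcal{M}$ has a boundary, one precomposes with a bounded Sobolev extension, which again incurs only a $p$-independent constant.)

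I expect the main obstacle to be Step 1, namely capturing the precise $\sqrt{p}$ rate rather than a cruder bound; this rests entirely on the divergence of $\sum_{j}2^{-2jd/p}$ like $p$ as $p\to\infty$, i.e.\ on the criticality of $H^{d/2}$. Once that is in hand, Steps 2--3 are routine, the sole care being to record that each localization, change-of-variables and cutoff constant is independent of $p$, so that the final constant $C$ retains the form $C\sqrt{p}$ with $C$ uniform in $p\ge 2$.
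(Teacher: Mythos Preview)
Your proposal is correct and follows exactly the route the paper indicates: the paper does not give a detailed proof but merely states that the lemma ``can be proved by the definition of Sobolev spaces on smooth compact manifold and the corresponding critical estimate in whole space (cf.\ \cite{BCD})'', which is precisely your Step~1 (the Littlewood--Paley/Bernstein argument from \cite{BCD} yielding the $\sqrt{p}$ factor on $\mathbb{R}^d$) followed by your Step~2 (transfer via a finite atlas and partition of unity, with the $p$-independence of all localization constants). Your write-up in fact supplies the details the paper omits.
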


\medskip

\noindent \textbf{Acknowledgments.}
H. Wu is a member of the Key Laboratory of Mathematics for Nonlinear Sciences (Fudan University), Ministry
of Education of China. The research of H. Wu was partially supported by NNSFC Grant No. 12071084
and the Shanghai Center for Mathematical Sciences at Fudan University.

%

\smallskip

\end{document}